\newtheorem{thm}{Theorem}[section]
\newtheorem{lemm}[thm]{Lemma}
\newtheorem{cor}[thm]{Corollary}
\newtheorem{prop}[thm]{Proposition}
\theoremstyle{definition}
\newtheorem{rem}[thm]{Remark}
\theoremstyle{remark} 
\newtheorem{case}{Case}
\newcommand*\rel@kern[1]{\kern#1\dimexpr\macc@kerna}
\newcommand*\widebar[1]{%
  \begingroup
  \def\mathaccent##1##2{%
    \rel@kern{0.8}%
    \overline{\rel@kern{-0.8}\macc@nucleus\rel@kern{0.2}}%
    \rel@kern{-0.2}%
  }%
  \macc@depth\@ne
  \let\math@bgroup\@empty \let\math@egroup\macc@set@skewchar
  \mathsurround\z@ \frozen@everymath{\mathgroup\macc@group\relax}%
  \macc@set@skewchar\relax
  \let\mathaccentV\macc@nested@a
  \macc@nested@a\relax111{#1}%
  \endgroup
}
\DeclareMathOperator{\diam}{diam}
\DeclareMathOperator{\dist}{dist}
\DeclareMathOperator{\Int}{int}
\DeclareMathOperator{\Mod}{mod}
\DeclareMathOperator{\loc}{loc}
\DeclareMathOperator{\mesh}{mesh}
\newcommand{\N}{\mathbb{N}}
\newcommand{\R}{\mathbb{R}}
\newcommand{\C}{\mathbb{C}}
\newcommand{\Z}{\mathbb{Z}}
\newcommand{\br}[1]{\overline{#1}}
\newcommand{\Rom}[1]{\expandafter\@slowromancap\romannumeral #1@}
\numberwithin{equation}{section}
\begin{document}

\title[Polyhedral approximation and uniformization]{Polyhedral approximation of metric surfaces and applications to uniformization}

\author{Dimitrios Ntalampekos}
\author{Matthew Romney}
\address{Mathematics Department, Stony Brook University, Stony Brook NY, 11794, USA.}
\email{dimitrios.ntalampekos@stonybrook.edu}
\email{matthew.romney@stonybrook.edu}

\date{\today}
\thanks{The first author is partially supported by NSF Grant DMS-2000096}
\subjclass[2020]{Primary 53C45. Secondary 30C65, 53A05}
\keywords{Length space, Gromov--Hausdorff convergence, metric surface, triangle, uniformization, quasiconformal mapping}

\maketitle

\begin{abstract}
 We prove that any length metric space homeomorphic to a $2$-manifold with boundary, also called a length surface, is the Gromov--Hausdorff limit of polyhedral surfaces with controlled geometry. As an application, using the classical uniformization theorem for Riemann surfaces and a limiting argument, we establish a general ``one-sided'' quasiconformal uniformization theorem for length surfaces with locally  finite Hausdorff $2$-measure. Our approach yields a new proof of the Bonk--Kleiner theorem characterizing Ahlfors $2$-regular quasispheres. 
\end{abstract} 

\setcounter{tocdepth}{1}
\tableofcontents

\section{Introduction}

\subsection{Polyhedral approximation}

\textit{Surfaces of bounded curvature} were introduced in the 1940s by A.\ D.\ Alexandrov as a generalization of Riemannian 2-manifolds. They provide a natural setting to develop the intrinsic geometry of surfaces. See monographs by Alexandrov--Zalgaller \cite{AZ:67} and Reshetnyak \cite{Res:93} for overviews of this subject. A foundational result, due to Alexandrov \cite{Ale:48},  is that any surface of bounded curvature is the uniform limit of polyhedral surfaces of uniformly bounded curvature.

The objective of this paper is an analogous theorem on polyhedral approximation for arbitrary length surfaces. Instead of the property of bounded curvature, we find it most useful to consider the behavior of the Hausdorff 2-measure $\mathcal{H}^2$. By \textit{length surface}, we mean a length metric space homeomorphic to a 2-manifold, with or without boundary. A \textit{polyhedral surface} is a surface formed by gluing locally finitely many planar polygonal faces isometrically along edges, equipped with the induced length metric. Such a surface is locally flat except in a discrete set of vertex points. We say that a sequence of maps $f_n \colon X_n \to Y_n$, $n \in \mathbb{N}$, between metric spaces is an \textit{approximately isometric sequence} if $f_n$ is a $\varepsilon_n$-isometry for some $\varepsilon_n>0$ for all $n \in \mathbb{N}$, where $\varepsilon_n \to 0$ as $n \to \infty$. We refer to \Cref{sec:prelim} for more detailed definitions. 
\begin{thm} \label{thm:main}
Let $X$ be a length surface. There exists a sequence of polyhedral surfaces $\{X_n\}_{n=1}^\infty$ each homeomorphic to $X$ such that the following properties hold for an absolute constant $K \geq 1$.  
\begin{enumerate}[label=\normalfont(\arabic*)]
    \item \label{item:main_1} There exists an approximately isometric sequence of maps $f_n \colon X_n \to X$, $n \in \mathbb{N}$. Moreover, each $f_n$ is a topological embedding.
    \item \label{item:main_2} For each {compact} set $A \subset X$, \[\limsup_{n \to \infty} \mathcal{H}^2(f_n^{-1}(A)) \leq K \mathcal{H}^2(A).\] 
\end{enumerate} 
\end{thm}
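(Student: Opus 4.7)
My plan is to build each $X_n$ from Euclidean comparison triangles fitted to a fine topological triangulation of $X$ at scale $\varepsilon_n \to 0$. First I would choose a maximal $\varepsilon_n$-separated net $V_n \subset X$ and construct a locally finite topological triangulation $\mathcal{T}_n$ of $X$ whose vertex set is $V_n$, whose edges are geodesic segments of length $O(\varepsilon_n)$, and whose faces are Jordan domains. One way to produce this is to pass to the nerve of a suitable cover of $X$ by small geodesic balls and then realise each abstract $2$-face as a Jordan domain in $X$ bounded by three short geodesics, exploiting local planarity of the $2$-manifold structure to resolve crossings.

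Next I would associate to each face $T \in \mathcal{T}_n$ a Euclidean comparison triangle $\widetilde{T}$ whose side lengths equal the pairwise vertex distances of $T$ in $X$, form the polyhedral surface $X_n$ by isometrically gluing the triangles $\widetilde{T}$ along shared combinatorial edges, and define $f_n \colon X_n \to X$ by sending each $\widetilde{T}$ homeomorphically onto the corresponding $T$, fixing the three vertices (extended for instance via a geodesic-coordinate construction inside $T$). To control shape I would distinguish \emph{thick} from \emph{thin} faces according to whether the three vertex distances are comparable up to a fixed factor; thin faces contribute negligible Euclidean area and can be collapsed to segments or absorbed into neighbouring faces with only an $O(\varepsilon_n)$ penalty to the approximation. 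Property \ref{item:main_1} then follows from the small-mesh bound combined with the equality of vertex-to-vertex distances in $X_n$ and $X$: any two points lie within $O(\varepsilon_n)$ of vertices, so distances in $X_n$ and $X$ agree up to an $O(\varepsilon_n)$ error, and injectivity of $f_n$ is built into the face-by-face construction.

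The main obstacle, which both drives the proof and fixes the absolute constant $K$, is the area bound \ref{item:main_2}. The heart of the matter is a universal geometric inequality of the form
\[ \mathcal{H}^2(\widetilde{T}) \leq K\, \mathcal{H}^2(T) \]
for every thick face $T \in \mathcal{T}_n$ with Euclidean comparison $\widetilde{T}$. This is delicate because $X$ can be arbitrarily irregular and no comparison of this kind is available for thin triangles, which is why the thickness reduction above is essential. My attempt would be an isoperimetric-type argument: since $\partial T$ consists of three geodesics whose lengths are comparable, any topological disk in a length surface filling such a boundary must carry $\mathcal{H}^2$-measure at least a universal constant times the Euclidean area of $\widetilde{T}$, via a ball-packing or fill-area lower bound along each geodesic side. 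Granted the inequality, the estimate on $\limsup_{n \to \infty} \mathcal{H}^2(f_n^{-1}(A))$ would follow by summing over the bounded-multiplicity family of faces that intersect a fixed small neighbourhood of the compact set $A$ and then invoking outer regularity of $\mathcal{H}^2$.
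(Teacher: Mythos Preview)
Your proposal has two genuine gaps, both rooted in the decision to fill each face with its Euclidean comparison triangle. First, the inequality $\mathcal{H}^2(\widetilde{T}) \leq K\,\mathcal{H}^2(T)$ is false even for ``thick'' triangles: the area of $\widetilde{T}$ is determined solely by the three vertex distances, whereas $\mathcal{H}^2(T)$ depends on the full metric of $\partial T$. If the three geodesic edges of $T$, though of comparable length, nearly coincide away from the vertices (as happens for a geodesic triangle in a region of concentrated negative curvature, or for the boundary of a slightly thickened tripod), then $\mathcal{H}^2(T)$ can be made arbitrarily small while $\widetilde{T}$ keeps a fixed positive area. A Besicovitch-type lower bound on $\mathcal{H}^2(T)$ needs a \emph{uniformly Lipschitz} map from $\partial T$ onto the boundary of a planar region, and the edge-isometric map to $\partial\widetilde{T}$ is not uniformly Lipschitz: points on different edges of $\partial T$ can be arbitrarily close in $X$ while their images in $\widetilde{T}$ sit a fixed distance apart. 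The paper's substitute for the comparison triangle is a $4$-bi-Lipschitz embedding of an arbitrary metric triangle into $\mathbb{C}$ (Proposition~\ref{prop:BL_embedding}); the region $\Omega$ it bounds then satisfies $\mathcal{H}^2(\Omega) \lesssim \mathcal{H}^2(T)$ by the Besicovitch inequality, and the polyhedral filling is constructed with area comparable to $\mathcal{H}^2(\Omega)$ rather than to any comparison triangle.

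Second, flat fillings create shortcuts that break the approximate isometry from below. For $x,y$ on different edges of $\partial T$, the straight chord in $\widetilde{T}$ can be strictly shorter than $d_X(x,y)$ (this is generic wherever the curvature is positive), and since geodesics in $X_n$ between distant vertices cut through the interiors of many faces, these deficits accumulate; matching vertex distances does not control $d_{X_n}$ from below. The paper avoids this by filling each triangle not with a flat disk but with a polyhedral surface assembled from attached cubes (Theorem~\ref{thm:triangular_surface}), engineered so that $d_S(x,y) \geq d_X(\varphi(x),\varphi(y))$ for all boundary points $x,y\in\partial S$. This no-shortcut property~\ref{item:filling_4} is exactly what yields the lower bound $d \leq d_n$ on the edge graph in \eqref{equ:isometry_estimate}, which in turn drives the $\varepsilon$-isometry.
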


In particular, the sequence $\{X_n\}_{n=1}^\infty$ converges in the Gromov--Hausdorff sense to $X$ \cite[Corollary 7.3.28]{BBI:01}. This theorem should be compared to the standard fact that any length surface is the Gromov--Hausdorff limit of locally finite embedded graphs; see Proposition 7.5.5 and the following exercise in \cite{BBI:01}. By filling in such a graph with polyhedral surfaces so that the length metric on the graph remains unchanged, one obtains a sequence of polyhedral surfaces also converging in the Gromov--Hausdorff sense to the original surface. The point of \Cref{thm:main} is to find approximating surfaces whose geometry is controlled by that of the original space. Compare also \Cref{thm:main} to the classical theorem of Bing \cite[Theorem 7]{Bin:57} that any topological surface in a 3-manifold $M$ may be uniformly approximated by homeomorphic polyhedral surfaces in the ambient space $M$.

There are three conceptual ingredients in the proof of \Cref{thm:main}. The first is a recent result on the existence of {decompositions of arbitrary length surfaces into non-overlapping convex triangular regions} by Creutz and the second-named author in \cite{CR:21}. 
The second is the following fact about bi-Lipschitz embedding metric triangles into the {Euclidean} plane, {denoted here by $\mathbb{C}$,} which we state in more generality than what is needed for \Cref{thm:main}. Despite its simple statement and proof, it appears to be new. By \textit{metric triangle}, we mean a metric space consisting of three points, called \textit{vertices}, and three closed arcs, called \textit{edges}, each isometric to an interval, connecting these vertices pairwise.  Note that this definition allows the edges to intersect at interior points.

\begin{prop} \label{prop:BL_embedding}
Every metric triangle is $L$-bi-Lipschitz embeddable in $\mathbb{C}$ with $L=4$.
\end{prop}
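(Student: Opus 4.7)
The plan is to construct an explicit bi-Lipschitz embedding $f \colon T \to \mathbb{C}$ and distinguish cases based on the topological structure of $T$.

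First I would handle the case where the three edges of $T$ meet only at the three vertices, so that $T$ is homeomorphic to a circle and its length metric coincides with the shorter-arc distance on a circle of perimeter $p = \ell_{12} + \ell_{13} + \ell_{23}$. I would map $T$ by arc length to the round Euclidean circle $C \subset \mathbb{C}$ of circumference $p$. For two points on $C$ at shorter arc distance $d \in [0, p/2]$, the Euclidean chord equals $(p/\pi)\sin(\pi d/p)$, which lies in $[2d/\pi, d]$ because $\sin\alpha/\alpha \in [2/\pi, 1]$ for $\alpha \in [0, \pi/2]$. Hence the map is bi-Lipschitz with constant $\pi/2 < 4$.

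Next I would treat the case where two edges, say $e_{12}$ and $e_{13}$, meet at an interior point. The hypothesis that each edge is isometric to an interval forces any such intersection to respect the parametrization: if $q \in e_{12} \cap e_{13}$ satisfies $d_T(q, v_1) = s$, then $q = e_{12}(s) = e_{13}(s)$ under the arc-length parametrizations from $v_1$. In the simplest sub-case, $e_{12}$ and $e_{13}$ share an initial arc of length $r > 0$ from $v_1$, so $T$ is homeomorphic to a \emph{tadpole}: the cycle formed by $e_{23}$ together with the non-shared parts of $e_{12}, e_{13}$, plus the shared arc as a pendant from the branching point $p$ to $v_1$. I would embed the cycle by the first case and attach the pendant as an outward radial segment at $f(p)$. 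Expanding $|f(x) - f(y)|^2 = |f(p) - f(y)|^2 + (r - t_x)^2 + 2(r - t_x)\hat{n} \cdot (f(p) - f(y))$, with $\hat{n}$ the outward unit normal at $f(p)$, one checks that the dot product is nonnegative for every $f(y) \in C$ (since $f(p)$ maximizes $\hat{n} \cdot \bullet$ on $C$). Using $|f(p) - f(y)| \geq (2/\pi) d_{\mathrm{cycle}}(p, y)$ together with the inequality $\sqrt{A^2 + B^2} \geq (A + B)/\sqrt{2}$ then gives bi-Lipschitz constant $\pi/\sqrt{2} < 4$.

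The hard part will be handling the full variety of intersection patterns. Two edges may share several disjoint arcs, and all three pairs may share arcs simultaneously; in principle the intersection set of two edges is an arbitrary closed subset of an interval. A complete proof would require either a structural description of $T$ with a matching construction in each sub-case, or a single uniform construction that is manifestly $4$-bi-Lipschitz irrespective of the combinatorial type. The constant $L = 4$ is not sharp in the cycle and simple tadpole cases (where $\pi/2$ and $\pi/\sqrt{2}$ already suffice) but presumably provides the margin needed to absorb whatever additional loss appears in the general intersection pattern.
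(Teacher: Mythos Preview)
Your proposal rests on a misreading of the definition. A \emph{metric triangle} is an arbitrary metric space $(\Delta,d)$ that decomposes into three arcs, each isometric to an interval; the distance between points on \emph{different} edges is not determined by arc length and need not be the shorter-arc (intrinsic) distance. Your Case~1 assumes exactly this, and it fails already for the boundary of a Euclidean triangle with the restricted Euclidean metric. Take vertices at $(0,0)$, $(1,0)$, $(1/2,\varepsilon)$: the midpoints of the two long edges are at Euclidean distance of order~$\varepsilon$, yet your arc-length map to a round circle of circumference~$\approx 2$ sends them to points at chord distance of order~$1$. The Lipschitz constant of the inverse blows up like $1/\varepsilon$, so no uniform bi-Lipschitz bound is possible. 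The tadpole case inherits the same defect, and you yourself note that the general intersection pattern is left open.

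The paper's argument avoids any case split on topology. It projects $\Delta$ onto the canonical \emph{tripod} $\bar\Delta$ determined by the Gromov products of the three vertices (this projection is $1$-Lipschitz regardless of how the edges overlap), and then defines $F(x)=\bar x+\dist(x,\widehat I(x))\,v_j$ for $x$ on edge $I_j$, adding a transverse component measuring the distance from $x$ to the union of the other two edges. The co-Lipschitz bound comes from comparing $d(x,y)$ to $D(\bar x,\bar y)$ plus the two transverse terms via a short case analysis on which edges contain the nearest points realizing those distances; combining with the $2$-bi-Lipschitz comparison between the auxiliary tripodal metric $D$ and the Euclidean metric yields $L=4$. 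The point is that the Gromov-product tripod encodes the pairwise vertex distances exactly, and the transverse term captures how far the actual metric triangle deviates from its tripod; neither step cares whether the edges intersect.
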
 

The third ingredient needed to prove \Cref{thm:main} is a variant of the Besicovitch inequality. See \cite[Section 13.2]{Pet:20} for a statement of this result, including a version for metric spaces (Exercise 13.22). The classical Besicovitch inequality states that the minimal Riemannian filling of a planar Jordan curve is the Jordan domain that it bounds. {The precise result we need} is given as \Cref{thm:besicovitch} below. 

The outline of the proof of \Cref{thm:main} is as follows. We start with a sufficiently fine {triangular decomposition} $\mathcal{T}$ of the surface $X$ and corresponding edge graph $\mathcal{E}(\mathcal{T})$, equipped with the induced length metric. For each {triangular region} $T \in \mathcal{T}$, we {use \Cref{prop:BL_embedding} to obtain a bi-Lipschitz embedding $F\colon \partial T \to \mathbb{C}$.} Using this embedding, we build a polyhedral surface $\widetilde{T}$ of Hausdorff $2$-measure {comparable to the area of the region bounded by $F(\partial T)$} with the property that the length metric on {$\partial \widetilde {T}$ is no smaller than the metric on $\partial T$.} \Cref{prop:BL_embedding} together with the Besicovitch inequality imply that the Hausdorff 2-measure of $\widetilde{T}$ is not too much larger than that of $T$. The polyhedral surfaces $\widetilde{T}$ are then glued together according to the edge graph $\mathcal{E}(\mathcal{T})$ {to form the surface $X_n$. In other words, we build $X_n$ by replacing each triangular region $T \subset X$ with the corresponding polyhedral surface $\widetilde{T}$. Our construction guarantees that $X$ and $X_n$ are approximately isometric.}

\subsection{Uniformization of surfaces}

In the second part of this paper, we give applications to the \textit{uniformization problem} for surfaces. This asks for the existence of geometrically well-behaved parametrizations of metric surfaces in the spirit of the classical uniformization theorem for Riemann surfaces. The classical uniformization theorem states that any simply connected Riemann surface can be mapped conformally onto either the complex plane, the open unit disk or the $2$-sphere. In the setting of metric spaces, conformality is a restrictive requirement, and it is more appropriate to consider instead some notion of quasiconformal mapping. 

Any orientable polyhedral surface can be given the structure of a Riemann surface compatible with its metric. As a result, \Cref{thm:main} {gives a new approach to proving} uniformization-type theorems for metric surfaces by invoking the classical uniformization theorem together with a limiting argument. Our main result on this topic, Theorem \ref{thm:one-sided_qc}, gives the existence of ``one-sided'' quasiconformal parametrizations in great generality.  

For $K\geq 1$, we say that a mapping $h\colon X\to Y$ between two metric surfaces of locally finite Hausdorff $2$-measure is \textit{weakly $K$-quasiconformal} if it is continuous, surjective, and monotone and if it satisfies the modulus inequality 
\begin{equation} \label{equ:qc_inequality}
    \Mod \Gamma \leq K \Mod h(\Gamma)
\end{equation}
for every path family $\Gamma$ in $X$; here $\Mod$ refers to the $2$-modulus. Recall that a continuous map between topological spaces is \textit{monotone} if the preimage of each point is connected. By a result of Youngs \cite{You:48}, monotone mappings between $2$-manifolds are precisely the uniform limits of homeomorphisms. Inequality \eqref{equ:qc_inequality} is commonly referred to as the \textit{$K_O$-inequality}, and a map $h$ satisfying \eqref{equ:qc_inequality} is said to have \textit{bounded outer dilatation}.

\begin{thm} \label{thm:one-sided_qc}
Let $X$ be a length surface of locally finite Hausdorff 2-measure homeomorphic to $\widehat{\C}$, $\overline{\mathbb{D}}$, or $\C$. Then there is a weakly $K$-quasi\-conformal mapping $h\colon \Omega\to X$ for $K = 4/\pi$, where $\Omega$ is either $\widehat{\C}$, $\overline{\mathbb{D}}$, or $\mathbb{D}$ or $\mathbb{C}$, respectively.
\end{thm}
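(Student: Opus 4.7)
The plan is to apply the polyhedral approximation \Cref{thm:main} to reduce the problem to the classical uniformization theorem for Riemann surfaces, and then to pass to a limit. First, \Cref{thm:main} provides polyhedral surfaces $X_n$ homeomorphic to $X$ and approximately isometric topological embeddings $f_n \colon X_n \to X$ satisfying the one-sided area bound $\limsup_n \mathcal{H}^2(f_n^{-1}(A)) \leq K \mathcal{H}^2(A)$ for every compact $A \subset X$, with $K = 4/\pi$. Each polyhedral surface $X_n$ is orientable (since $X$ is) and carries a canonical Riemann surface structure compatible with its length metric, so the classical uniformization theorem, together with Carath\'eodory's extension theorem in the disk case, yields a conformal homeomorphism $\psi_n \colon \Omega_n \to X_n$ onto one of $\widehat{\C}$, $\overline{\mathbb{D}}$, $\mathbb{D}$, or $\mathbb{C}$, determined by the topology and conformal type of $X_n$.

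Set $h_n := f_n \circ \psi_n$. I normalize the $\psi_n$ by fixing three points (on the sphere, on the boundary of the disk, or a base point and a scale in the plane case, respectively) so that the domains $\Omega_n$ stabilize to a single domain $\Omega$; in the non-compact simply connected case, a disk-exhaustion together with the classical distortion theorem identifies $\Omega$ as $\mathbb{D}$ or $\mathbb{C}$ in the limit. The core analytic step is to establish equicontinuity of $\{h_n\}$ on compact subsets of $\Omega$. For this, I combine the conformal invariance of modulus under $\psi_n$ with the $\mathcal{H}^2$-estimate above: if $\rho$ is admissible for a path family in $X$, then a slight inflation of $\rho \circ f_n$ (to absorb the $\varepsilon_n$-distortion of path lengths) is admissible for the $f_n$-preimage family in $X_n$, and by conformality of $\psi_n$ it pulls back to an admissible density for the corresponding family in $\Omega_n$, with $L^2$-integral bounded by $(K + o(1)) \int_X \rho^2 \, d\mathcal{H}^2$. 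A standard modulus-of-annulus argument then converts this upper bound on moduli of separating families into a uniform modulus of continuity for $h_n$ on compacta, using that annuli of bounded geometric ratio in $\Omega$ have definite modulus.

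By Arzel\`a--Ascoli, after passing to a subsequence, $h_n$ converges uniformly on compacta to a continuous map $h \colon \Omega \to X$. Surjectivity of $h$ follows from the fact that $f_n(X_n)$ is $\varepsilon_n$-dense in $X$, and monotonicity follows from the theorem of Youngs, since $h$ is the uniform limit of topological embeddings between 2-manifolds of the same topological type. Finally, for the modulus inequality \eqref{equ:qc_inequality}, given a path family $\Gamma$ in $\Omega$ and an admissible $\rho$ for $h(\Gamma)$, applying the estimate above to $\rho$ (after a small inflation) and taking the infimum over admissible $\rho$, then letting $n \to \infty$ using the uniform convergence and the one-sided area bound, produces precisely $\Mod \Gamma \leq (4/\pi) \Mod h(\Gamma)$.

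The main obstacle I anticipate is the equicontinuity step, because $X$ is assumed only to have locally finite Hausdorff $2$-measure and may satisfy no Ahlfors-regularity or Loewner-type condition; the entire bound on oscillation must be extracted from the one-sided measure estimate of \Cref{thm:main} together with conformality of $\psi_n$. Additional technical care is needed to identify the conformal type of the limit domain in the case $X \approx \mathbb{C}$, to handle the boundary behavior in the case $X \approx \overline{\mathbb{D}}$ (ensuring $h$ extends continuously to the closed disk and respects the boundary), and to verify that the modulus inequality indeed survives the passage to the uniform-on-compacta limit with the sharp constant $K = 4/\pi$.
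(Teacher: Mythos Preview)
Your overall strategy matches the paper's, but there are two genuine gaps.

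First, the constant $K = 4/\pi$ does not come from \Cref{thm:main}: the absolute constant produced there is large (on the order of thousands, coming from the Besicovitch inequality composed with the bi-Lipschitz embedding and the cube-filling construction). The paper obtains a weakly $K$-quasiconformal map for \emph{some} absolute $K$ via the limiting argument and then remarks separately, citing \cite{Raj:17} and \cite{Rom:19b}, that the constant can be improved to $4/\pi$ by an independent factorization argument. Your sketch claims the sharp constant emerges directly from the area bound, which it does not.

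Second, your mechanism for both equicontinuity and the limiting modulus inequality---transferring an admissible density $\rho$ on $X$ to $X_n$ via ``a slight inflation of $\rho \circ f_n$''---does not work as stated. The maps $f_n$ are only $\varepsilon_n$-isometries and topological embeddings; they are not Lipschitz, and there is no reason a rectifiable path $\gamma$ in $X_n$ should satisfy $\int_{f_n \circ \gamma} \rho \, ds \leq \int_\gamma (\rho \circ f_n)\, ds + o(1)$. The paper sidesteps this entirely. For equicontinuity it argues by contradiction using only the crude bound $\limsup_n \mathcal{H}^2(X_n) < \infty$: the constant function is admissible for any family of separating curves of length bounded below, so $\Mod \Gamma^*(E_n,F_n;X_n)$ stays bounded, contradicting the blow-up of the corresponding separating modulus in $\Omega$ (\Cref{lemma:modulus_bound_convergence}, \Cref{lemma:equicontinuity}). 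For the modulus inequality on the limit map, the paper never pushes densities through $f_n$; instead it observes that the upper gradients $|Dh_n|$ of the conformal maps $h_n\colon \Omega \to X_n$ are bounded in $L^2(\Omega)$ by the area bound, extracts a weak $L^2$ limit $g_h$, uses Mazur's and Fuglede's lemmas to show $g_h$ is a weak upper gradient of $h$, and then verifies $\int_{h^{-1}(E)} g_h^2 \, d\mathcal H^2 \leq K\,\mathcal{H}^2(E)$ via lower semicontinuity of the $L^2$ norm under weak convergence combined with the one-sided area estimate (\Cref{lemma:weak_upper_gradient}, \Cref{lemma:jacobian}).

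A smaller point: your appeal to Youngs' theorem for monotonicity needs $h_n$ to be homeomorphisms \emph{onto} $X$, but in the closed-disk case $f_n$ is only an embedding with $f_n(X_n)\subsetneq X$. The paper instead proves monotonicity of the limit directly by a short separating-curve argument using the path-lifting property under Gromov--Hausdorff convergence (\Cref{lemma:convergence_to_monotone}).
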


Here, $\mathbb D$ denotes the open unit disk in the complex plane $\C$, and $\widehat{\C}$ is the Riemann sphere, with the spherical metric and measure. To prove \Cref{thm:one-sided_qc}, it is enough to find a weakly $K$-quasiconformal mapping $h$ for some $K \geq 1$. This value can be improved to the constant $K = 4/\pi$ using the argument in \cite[Section 14]{Raj:17} or \cite{Rom:19b}. The constant $4/\pi$ is sharp, as can be shown using the example of the $\ell^\infty$-metric on $\mathbb{R}^2$; see Example 2.2 in \cite{Raj:17}.

{\Cref{thm:one-sided_qc} is motivated by the question of finding minimal assumptions required for producing a uniformizing parametrization of a metric surface. In particular, it gives an affirmative answer to Question 1.1 in \cite{IR:20}, attributed to Rajala and Wenger, under the mild assumption that the metric on $X$ is a length metric. We discuss the relation between \Cref{thm:one-sided_qc} and previous results on the uniformization problem later in this section.} 

Equivalently, we can replace \eqref{equ:qc_inequality} in the definition of weak quasiconformality  by the statement that $h\in N^{1,2}_{\loc}(X,Y)$ and the pointwise distortion inequality $g_h(x)^2 \leq K J_h(x)$ holds for almost every $x \in X$. Here, $g_h$ is the minimal weak upper gradient of $h$ and $J_h$ is the Jacobian of $h$, that is, the Radon--Nikodym derivative of the measure $\mathcal H^2\circ h$ with respect to $\mathcal H^2$. 

\begin{thm}\label{theorem:definitions_qc_1}
Let $X,Y$ be metric surfaces of locally finite Hausdorff $2$-measure and $K\geq 1$. A continuous, surjective, and monotone mapping $h\colon X\to Y$ is weakly $K$-quasiconformal if and only if $h\in N^{1,2}_{\loc}(X,Y)$ and 
$$g_h(x)^2 \leq K J_h(x)$$
for a.e.\ $x\in X$. 
\end{thm}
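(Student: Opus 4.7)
The plan is to prove the two implications separately, following the standard correspondence between the geometric (modulus) and analytic (Sobolev) formulations of quasiconformality on metric surfaces.

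\emph{Sufficiency.} Assume $h\in N^{1,2}_{\loc}(X,Y)$ with $g_h^2\leq K J_h$ almost everywhere. Given a path family $\Gamma$ in $X$ and a function $\rho$ admissible for $h(\Gamma)$, set $\tilde\rho = (\rho\circ h)\, g_h$. Since $g_h$ is a weak upper gradient of $h$, for $2$-modulus almost every $\gamma\in\Gamma$ the composition $h\circ\gamma$ is absolutely continuous with metric speed bounded by $g_h(\gamma(t))|\gamma'(t)|$, so
\[
\int_\gamma \tilde\rho\, ds \geq \int_{h\circ\gamma}\rho\, ds \geq 1,
\]
making $\tilde\rho$ admissible for $\Gamma$ outside a modulus-zero family. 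Combining the pointwise inequality with the area bound
\[
\int_X (\rho\circ h)^2 J_h\, d\mathcal{H}^2 \leq \int_Y \rho^2\, d\mathcal{H}^2,
\]
which follows from a layer-cake argument using the surjectivity identity $h(h^{-1}(E))=E$ together with the Lebesgue differentiation estimate $\int_A J_h\, d\mathcal{H}^2 \leq \mathcal{H}^2(h(A))$, one deduces $\int_X \tilde\rho^2\, d\mathcal{H}^2 \leq K\int_Y \rho^2\, d\mathcal{H}^2$. Taking the infimum over $\rho$ yields $\Mod\Gamma \leq K\Mod h(\Gamma)$.

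\emph{Necessity.} Assume $h$ is weakly $K$-quasiconformal. I would first establish that $h\in N^{1,2}_{\loc}(X,Y)$ by constructing a weak upper gradient in $L^2_{\loc}(X)$. For each Lipschitz compactly supported $u\colon Y\to \mathbb{R}$, the constant $\mathrm{Lip}(u)$ is an upper gradient of $u$; applying the modulus inequality to test path families lifts this to an $L^2_{\loc}(X)$ weak upper gradient $\rho_u$ of $u\circ h$ with $\int_U \rho_u^2\, d\mathcal{H}^2 \leq K\,\mathrm{Lip}(u)^2\,\mathcal{H}^2(h(U))$ on compacta. A measurable supremum over a countable dense collection of truncated distance functions $\{u_k\}$ on $Y$ (which suffices by separability) then produces a weak upper gradient of $h$ itself with locally finite $L^2$ energy. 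For the pointwise inequality, at a.e. $x\in X$ that is simultaneously a Lebesgue point of $g_h^2$ and a differentiation point of the set function $A\mapsto\mathcal{H}^2(h(A))$, a standard modulus differentiation argument, applied to a suitable path family localized to a small ball $B(x,r)$ and comparing source and target modulus bounds as $r\to 0$, yields $g_h(x)^2\leq K J_h(x)$.

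The main obstacle is the weak upper gradient construction in the necessity direction. Because $h$ is only monotone rather than a homeomorphism, point preimages are nondegenerate continua and one cannot pull modulus back through a set-theoretic inverse; the argument must proceed entirely through test functions defined on $Y$ and exploit the area inequality together with surjectivity to control the behaviour of $h$ on its fibres. Once this is in place, the pointwise inequality is a routine differentiation argument, and the sufficiency direction is a direct computation.
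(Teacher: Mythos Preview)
Your sufficiency direction is essentially the paper's: from $g_h^2\le KJ_h$ and $\int_A J_h\,d\mathcal H^2\le \mathcal H^2(h(A))$ one gets $\int_{h^{-1}(E)}g_h^2\,d\mathcal H^2\le K\mathcal H^2(E)$ for Borel $E\subset Y$, and then the modulus inequality follows from the path-integral inequality $\int_{h\circ\gamma}\rho\,ds\le\int_\gamma(\rho\circ h)g_h\,ds$ for a.e.\ curve.

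In the necessity direction there is a genuine gap. The conclusion $g_h^2\le KJ_h$ presupposes that $J_h$ exists, i.e., that the set function $\nu(E)=\mathcal H^2(h(E))$ is a $\sigma$-finite Borel measure whose Radon--Nikodym derivative one may take. For a continuous monotone map this is not automatic: disjoint Borel sets can have overlapping images, so $\nu$ need not be countably additive. Your phrase ``a differentiation point of the set function $A\mapsto\mathcal H^2(h(A))$'' quietly assumes precisely what must be proved, and the ``modulus differentiation'' sketch does not touch this issue.

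The paper's route is different and uses the two-dimensionality in an essential way. From the modulus inequality (Sobolev regularity is obtained by citing Williams' theorem rather than by your test-function construction) one shows that for every $y$ outside a Borel set $B(Y)\subset Y$ of $\mathcal H^2$-measure zero---namely the set where $r^{-2}\mathcal H^2(B(y,r))\to\infty$---the fibre $h^{-1}(y)$ is a single point. The mechanism is: if $h^{-1}(y)$ were a nondegenerate continuum, a topological lemma specific to metric surfaces gives that the family of curves in $X$ joining it to any other continuum has positive modulus, whereas the image family consists of non-constant curves through the single point $y$ and has modulus zero whenever $y\notin B(Y)$, contradicting weak quasiconformality. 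Injectivity of $h$ off $h^{-1}(B(Y))$ plus the Lusin--Souslin theorem then makes $h(E)\setminus B(Y)$ Borel, $\nu$ becomes an honest locally finite Borel measure, and the pointwise bound follows from the integral bound $\int_{h^{-1}(E)}g_h^2\,d\mathcal H^2\le K\mathcal H^2(E)$ by Lebesgue decomposition. Your outline contains none of this machinery.
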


In the case that $h$ is a homeomorphism, this result follows from {a theorem of Williams} \cite{Wil:12}. We prove the equivalence in the case of monotone mappings in Section \ref{sec:further_properties}. One of the technicalities here is to justify existence of the Jacobian.

We note that in the case that $X$ is homeomorphic to $\C$ in Theorem \ref{thm:one-sided_qc}, there is no clear distinction between the situations where $\Omega = \mathbb D$ and where $\Omega=\mathbb C$, as the following example shows. 

\begin{prop}\label{prop:example}
There exists a length surface $X$ of locally finite Hausdorff $2$-measure, homeomorphic to $\C$, admitting weakly quasiconformal parametrizations by both $\mathbb{D}$ and $\C$.
\end{prop}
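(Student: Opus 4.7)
The plan is to construct $X$ as the complement of a single point $p$ in a length sphere $\widebar X$ (topologically $\widehat\C$) whose metric structure at $p$ is degenerate in a controlled fashion. The example will balance two competing requirements: $X = \widebar X \setminus \{p\}$ should carry a conformal structure of parabolic type, but the point $p \in \widebar X$ should be ``conformally fat'' enough that a weakly quasiconformal map from $\widehat\C$ to $\widebar X$ is forced to send a closed topological disk to $\{p\}$.

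For the $\C$-parametrization, the first step is to arrange that $\widebar X$ is smooth and Riemannian on $\widebar X \setminus \{p\}$, so that $X$ inherits the structure of a once-punctured Riemann surface and is therefore conformally equivalent to $\C$ by the classical uniformization theorem. The conformal bijection $\iota \colon \C \to X$ is automatically weakly $1$-quasiconformal as a mapping of metric spaces.

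For the $\mathbb{D}$-parametrization, I apply \Cref{thm:one-sided_qc} to $\widebar X$ (which is homeomorphic to $\widehat\C$) to obtain a weakly $K$-quasiconformal map $\Phi \colon \widehat\C \to \widebar X$. Since $\Phi$ is monotone, the preimage $E := \Phi^{-1}(p)$ is a connected compact subset of $\widehat\C$. I would engineer the metric near $p$ so that $\widebar X$ has arbitrarily small neighborhoods of $p$ whose conformal $2$-moduli (moduli of the annular regions around $p$) remain uniformly bounded; by the modulus inequality, this forces $E$ to be a nondegenerate continuum. With a further refinement --- obtained by realizing $\Phi$ as the limit of conformal parametrizations of polyhedral sphere approximations of $\widebar X$ given by \Cref{thm:main}, where one explicit face of each polyhedral approximation is earmarked to cover the degenerate region near $p$ --- I would ensure that $E$ is in fact a closed Jordan domain. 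Then $\widehat\C \setminus E$ is homeomorphic to $\mathbb{D}$, and the restriction $\Phi|_{\widehat\C \setminus E} \colon \mathbb{D} \to X$ is the desired weakly $K$-quasiconformal parametrization.

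The main obstacle is the joint engineering of $\widebar X$ so that both conditions --- a parabolic conformal structure on $X$ and a disk-sized preimage of $p$ under any weakly quasiconformal parametrization of $\widebar X$ --- are satisfied simultaneously. The two requirements pull in opposite directions: the former demands that the Riemannian structure on $X$ extends well up to the end, while the latter demands that the end of $X$ be non-Riemannian with bounded conformal moduli. Resolving this tension, either by an explicit non-conformal metric perturbation near $p$ or by a polyhedral limiting construction in which the approximating surfaces' conformal geometry degenerates in the prescribed disk-shaped way, is the core technical challenge of the proof.
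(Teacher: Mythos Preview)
Your approach has a genuine gap: the tension you identify is not merely a technical challenge but an actual impossibility. If $X = \widebar X \setminus \{p\}$ is smooth Riemannian, then at every point $y \in X$ one has $\liminf_{r\to 0} \mathcal H^2(B(y,r))/r^2 < \infty$. By \Cref{theorem:homeomorphism}, any weakly quasiconformal map $h\colon \mathbb D \to X$ must then be a homeomorphism. Composing with your conformal bijection $\iota^{-1}\colon X \to \C$ yields a weakly quasiconformal homeomorphism $\iota^{-1}\circ h\colon \mathbb D \to \C$ between planar domains; such a map is genuinely quasiconformal, contradicting Liouville's theorem. So no smooth Riemannian $X$ of parabolic type can simultaneously admit a weakly quasiconformal parametrization by $\mathbb D$. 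The ``conformally fat point'' picture cannot be realized while keeping $X$ Riemannian away from $p$.

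The paper's construction avoids this trap by making $X$ genuinely non-Riemannian. It starts from a totally disconnected closed set $C \subset \C$ that is \emph{non-removable} for conformal maps (from Ahlfors--Beurling theory), contained in countably many rectifiable curves. The metric $d$ on $\C$ is defined via the density $\chi_{\C\setminus C}$, so that traveling inside $C$ is free. The identity map $(\C,|\cdot|)\to(\C,d)$ is then weakly $1$-quasiconformal --- but only weakly, since the inverse modulus inequality fails. For the disk parametrization, non-removability yields a conformal map $g\colon V \to \C\setminus C$ from a subdomain $V\subset \mathbb D$ having $\partial\mathbb D$ as a boundary component; this $g$ extends to a monotone, weakly $1$-quasiconformal map $\mathbb D \to (\C,d)$. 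The essential idea you are missing is that $X$ must fail to be quasiconformally equivalent to any planar domain, which forces the metric to be degenerate on a set more complicated than a single point.
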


This contrasts with uniformization by quasiconformal mappings, since $\C$ is not quasiconformally equivalent to any proper subdomain. We present this example in \Cref{sec:examples}, where we also discuss other examples. 

As a corollary to \Cref{thm:one-sided_qc}, we obtain a result on the existence of \textit{minimal disks} or \textit{solutions to Plateau's problem} in metric spaces. This topic has been studied in great depth by Lytchak--Wenger and collaborators in \cite{FW:21,GW:20,LW:17,LW:17b,LW:18a}. Following \cite{LW:17}, for a given metric space and Jordan curve $\Gamma \subset X$ we let $\Lambda(\Gamma, X)$ denote the family of maps in the Sobolev space $N^{1,2}(\mathbb{D}, X)$ whose trace is a monotone parametrization of $\Gamma$. A \textit{solution to Plateau's problem} for $\Gamma$ is a map in $\Lambda(\Gamma, X)$ having minimal {parametrized} area and minimal (Reshetnyak) energy among area minimizers. {See the references above for more complete definitions.} It is shown in \cite{LW:17} that every Jordan curve in a complete proper metric space $X$ can be spanned by a minimal disk provided that $\Lambda(\Gamma, X)$ is non-empty. Lytchak and Wenger rely on the assumption that $X$ satisfies a quadratic isoperimetric inequality to guarantee that $\Lambda(\Gamma, X)$ is indeed non-empty for any rectifiable curve $\Gamma$. 

In the case where $X$ is a length surface and $\Gamma$ bounds a closed disk, \Cref{thm:one-sided_qc} allows us to remove this dependency on the quadratic isoperimetric inequality. Instead, we require only that the Hausdorff 2-measure is finite. Note as well that we do not require $\Gamma$ to be rectifiable.

\begin{cor} \label{cor:plateau}
Let $X$ be a length surface of finite Hausdorff 2-measure homeomorphic to a closed disk and let $\Gamma = \partial X$. The family $\Lambda(\Gamma,X)$ is non-empty. Consequently, Plateau's problem for $\Gamma$ has a solution.
\end{cor}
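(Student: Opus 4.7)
The plan is to produce an element of $\Lambda(\Gamma,X)$ directly from the weakly quasiconformal parametrization supplied by \Cref{thm:one-sided_qc}, and then to invoke the Lytchak--Wenger existence theorem to extract a solution to Plateau's problem.

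I would first apply \Cref{thm:one-sided_qc} with $\Omega=\overline{\mathbb{D}}$ to obtain a weakly $K$-quasiconformal map $h\colon \overline{\mathbb{D}}\to X$ for $K=4/\pi$. By \Cref{theorem:definitions_qc_1}, $h$ lies in $N^{1,2}_{\loc}(\mathbb{D},X)$ and satisfies the pointwise distortion inequality $g_h^{2}\leq K J_h$ almost everywhere. To upgrade this to a global energy bound, I would integrate the inequality and use that $J_h$ is the Radon--Nikodym derivative of the pull-back of $\mathcal{H}^2$ under $h$:
\[
\int_{\mathbb{D}} g_h^{2}\,d\mathcal{H}^2 \;\leq\; K\int_{\mathbb{D}} J_h\,d\mathcal{H}^2 \;\leq\; K\,\mathcal{H}^2(X) \;<\; \infty,
\]
where finiteness in the last step is the standing hypothesis on $X$. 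Consequently $h|_{\mathbb{D}}\in N^{1,2}(\mathbb{D},X)$.

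Next, I would identify the Sobolev trace of $h$ with the boundary restriction $h|_{\partial\mathbb{D}}$ using continuity of $h$ on $\overline{\mathbb{D}}$, and argue that $h|_{\partial\mathbb{D}}$ is a monotone parametrization of $\Gamma=\partial X$. By Youngs' theorem there is a sequence of homeomorphisms $h_n\colon \overline{\mathbb{D}}\to X$ converging uniformly to $h$; passing to a tail, I may assume all $h_n$ share the same orientation. Each $h_n$ then restricts to an orientation-preserving homeomorphism $\partial\mathbb{D}\to\Gamma$, and the uniform limit of such homeomorphisms is a continuous degree-one surjection whose lifts to the universal cover are non-decreasing. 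It follows that $h|_{\partial\mathbb{D}}\colon S^{1}\to\Gamma$ has connected point-preimages, so it is a monotone parametrization of $\Gamma$. Hence $h\in\Lambda(\Gamma,X)$, and this family is non-empty.

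Finally, since $X$ is homeomorphic to a closed disk it is compact, hence proper and complete. With $\Lambda(\Gamma,X)$ non-empty, the Lytchak--Wenger existence result from \cite{LW:17} yields a solution to Plateau's problem for $\Gamma$. The main obstacle I anticipate is justifying the area bound $\int_{\mathbb{D}} J_h\,d\mathcal{H}^2 \leq \mathcal{H}^2(X)$: since $h$ is only monotone and not injective, the set function $A\mapsto\mathcal{H}^2(h(A))$ need not be countably additive, and the inequality must be extracted carefully from the precise notion of Jacobian used in \Cref{theorem:definitions_qc_1}. A secondary subtlety is the monotonicity of the boundary restriction, which above I reduced to a one-dimensional statement that nonetheless merits care.
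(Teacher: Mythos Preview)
Your proposal is correct and follows essentially the same route as the paper: obtain $h$ from \Cref{thm:one-sided_qc}, verify $h\in N^{1,2}(\mathbb{D},X)$, and check that $h|_{\partial\mathbb{D}}$ is a monotone parametrization of $\Gamma$ via Youngs' theorem, then invoke \cite{LW:17}. The paper disposes of your two flagged concerns more directly: membership in $N^{1,2}$ is already part of the conclusion of the auxiliary \Cref{theorem:uniformization_long} used to prove \Cref{thm:one-sided_qc} (so no Jacobian argument is needed---though your worry is harmless, since by \Cref{theorem:qc_definitions} the Jacobian $J_h$ is the Radon--Nikodym derivative of $E\mapsto\mathcal{H}^2(h(E))$, giving $\int_{\mathbb{D}} J_h\,d\mathcal{H}^2\le\mathcal{H}^2(X)$ immediately), and boundary monotonicity is packaged as \Cref{lemma:monotone_boundary}, which derives it straight from the monotonicity of $h$ on $\overline{\mathbb{D}}$ rather than through a degree argument on $S^1$.
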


Finally, we use \Cref{thm:one-sided_qc} to give a new proof of the well-known Bonk--Kleiner theorem characterizing Ahlfors 2-regular quasispheres, i.e., metric spaces quasisymmetrically equivalent to the standard $2$-sphere. {See \Cref{sec:qs} for definitions of the terms here.}
\begin{cor}[Bonk--Kleiner theorem] \label{cor:bonk-kleiner}
Let $X$ be a metric space homeomorphic to $\widehat{\C}$ that is Ahlfors 2-regular. Then there is a quasisymmetric homeomorphism from $X$ onto $\widehat{\C}$ if and only if $X$ is linearly locally connected. 
\end{cor}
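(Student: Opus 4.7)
The forward direction is classical: the standard sphere $\widehat{\C}$ is LLC, and the LLC property is preserved under quasisymmetric homeomorphisms, so quasisymmetric equivalence to $\widehat{\C}$ forces $X$ to be LLC. For the converse, I assume $X$ is Ahlfors 2-regular and LLC and aim to construct a quasisymmetric homeomorphism from $\widehat{\C}$ to $X$. The plan is to apply \Cref{thm:one-sided_qc} to obtain a weakly quasiconformal map, then to upgrade it first to a homeomorphism and finally to a quasisymmetry, using the regularity and connectivity of $X$.

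The first step is a reduction to the case where $X$ carries a length metric. Compact Ahlfors 2-regular LLC metric spheres are quasiconvex (this is implicit in the geometric arguments of Bonk--Kleiner), so the length metric $d_\ell$ on $X$ is bi-Lipschitz equivalent to the original metric $d$. Since LLC, Ahlfors 2-regularity, and the existence of a quasisymmetric homeomorphism to $\widehat{\C}$ are all invariant under bi-Lipschitz change of metric, I replace $d$ by $d_\ell$ and treat $X$ as a length surface of finite Hausdorff 2-measure. Applying \Cref{thm:one-sided_qc} then produces a weakly $(4/\pi)$-quasiconformal map $h\colon \widehat{\C} \to X$.

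Next I argue that $h$ is a homeomorphism. Since $h$ is monotone, if it fails to be injective then there is $x_0 \in X$ whose fiber $C := h^{-1}(x_0)$ is a non-degenerate continuum. Pick a non-degenerate continuum $F \subset \widehat{\C} \setminus C$ and let $\Gamma$ be the family of paths in $\widehat{\C}$ joining $C$ and $F$; a classical estimate gives $\Mod \Gamma > 0$. On the other hand, each path of $h(\Gamma)$ has one endpoint at $x_0$, so $h(\Gamma)$ is contained in the family of paths in $X$ emanating from $\{x_0\}$. A standard logarithmic test-function computation, using only the upper mass bound of Ahlfors 2-regularity, shows that this family has zero 2-modulus. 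The modulus inequality $\Mod \Gamma \leq K \Mod h(\Gamma)$ then forces $\Mod \Gamma = 0$, a contradiction. Hence $h$ is injective, and by compactness of $\widehat{\C}$ it is a homeomorphism.

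It remains to upgrade $h$ from weakly quasiconformal to quasisymmetric. Under Ahlfors 2-regularity and LLC, the reciprocal modulus inequality holds on $X$, giving $h$ the full two-sided modulus control of a quasiconformal homeomorphism; from there, LLC promotes metric quasiconformality to quasisymmetry via the standard Heinonen--Koskela framework. I expect this last step to be the main obstacle, since it is here that the geometric assumptions on $X$ must be combined with the analytic output of \Cref{thm:one-sided_qc} to extract the symmetric two-sided control characteristic of quasisymmetry.
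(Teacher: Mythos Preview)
Your proposal is correct and follows essentially the same approach as the paper: reduce to a length metric via quasiconvexity (the paper cites Semmes \cite{Sem:96} and Wildrick \cite{Wil:10} rather than Bonk--Kleiner), apply \Cref{thm:one-sided_qc}, upgrade $h$ to a homeomorphism via exactly your modulus-of-curves-through-a-point argument (formalized in the paper as \Cref{theorem:homeomorphism}), and then promote to a quasisymmetry. The last step you flag as the main obstacle is not one---the paper simply invokes it as a known black box, citing \cite[Theorem 2.5]{LW:20} and \cite[Proof of Corollary 1.7]{Raj:17}.
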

Since this result was originally proved by Bonk and Kleiner in \cite{BK:02}, alternative proofs have been given by Rajala \cite{Raj:17} and Lytchak--Wenger \cite{LW:20}. We now give a brief summary of the three approaches. The basic common step to all these proofs is to produce a mapping (or sequence of mappings) and to use the geometric assumptions to show that the mapping is indeed a quasisymmetric homeomorphism (or that the sequence subconverges to a quasisymmetric homeomorphism). Thus the main difference is how such a mapping is produced.  

In the original proof \cite{BK:02}, Bonk--Kleiner use the geometric assumptions to find an embedded graph that approximates the original space $X$ at a given scale. They apply the Andreev--Koebe--Thurston circle packing theorem to produce a map from the vertex set of this graph into $\widehat{\C}$. These maps subconverge to a quasisymmetric homeomorphism from the whole space to $\widehat{\mathbb{C}}$. Next, in \cite{Raj:17}, Rajala obtains the Bonk--Kleiner theorem as a consequence a general uniformization theorem for quasiconformal mappings. The proof is based on the construction of a harmonic function and corresponding conjugate function on an arbitrary quadrilateral. Pairing these functions gives a quasiconformal homeomorphism from this quadrilateral onto a rectangle in the plane. Rajala's proof is especially notable in that he carries out this construction essentially from scratch. Finally, in the Lytchak--Wenger proof \cite{LW:20}, the existence of the required mapping is provided by the authors' solution to Plateau's problem in metric spaces satisfying a quadratic isoperimetric inequality in \cite{LW:17}, \cite{LW:18a}. 

Our approach, in turn, establishes the Bonk--Kleiner theorem as a consequence of the classical uniformization theorem for Riemann surfaces. In particular, our proof gives a direct connection between the classical uniformization theorem and contemporary work on the uniformization of metric surfaces. That \Cref{thm:one-sided_qc} implies the Bonk--Kleiner theorem is standard; see Theorem 4.9 in \cite{HK:98} and Section 16 in \cite{Raj:17}. The idea is that the assumption that $X$ is Ahlfors 2-regular and linearly locally connected allows one to promote the map $h$ in \Cref{thm:one-sided_qc} to a quasisymmetric homeomorphism.

In addition to the results already mentioned, the uniformization problem has also been studied for metric surfaces of other topological type \cite{GW:18,Iko:22,Wil:08,Wil:10}. One ingredient in \cite{GW:18} and \cite{Iko:22} is the use of the classical uniformization theorem to pass from local quasiconformal or quasisymmetric charts to a globally defined mapping. In contrast, our proof uses the classical uniformization theorem to handle both the local and global aspects of the problem.

Finally, a version of \Cref{thm:one-sided_qc} has been proved concurrently and independently by Meier and Wenger in \cite{MW:21} using a different method, building on the machinery for studying Plateau's problem in \cite{LW:17} and related papers. They also derive the Bonk--Kleiner theorem as a consequence, {along with additional applications}.

\subsection{Outline of the paper} 

In \Cref{sec:prelim}, we review terminology and background related to metric geometry and analysis in metric spaces. Next, \Cref{sec:bl_embedding_triangles} contains the proof of \Cref{prop:BL_embedding} on bi-Lipschitz embeddings of metric triangles in the plane. In \Cref{sec:filling}, we give the construction of polyhedral fillings for any simple metric triangle. The proof of \Cref{thm:main} is then presented in \Cref{sec:approximating_surfaces}. Next, in \Cref{sec:uniformization}, we prove \Cref{thm:one-sided_qc}, \Cref{cor:plateau}, and \Cref{cor:bonk-kleiner} giving our applications to the uniformization problem. In \Cref{sec:further_properties}, we investigate further the regularity properties of the parametrizations in \Cref{thm:one-sided_qc} and prove Theorem \ref{theorem:definitions_qc_1}. Finally, Section \ref{sec:examples} contains several examples, including the example used to prove \Cref{prop:example}.

\subsection*{Acknowledgments} We are especially thankful to Paul Creutz, Alexander Lytchak and Kai Rajala for various conversations about the ideas in this paper. We also thank the anonymous referees for their valuable comments, which improved the exposition.
\bigskip

\section{Preliminaries} \label{sec:prelim}

\subsection{Metric geometry}

We assume that the reader is familiar with the basics of metric geometry as presented, for example, in \cite{BH:99} and \cite{BBI:01}. We recall a few definitions of particular interest. In the following, $X$ and $Y$ will denote metric spaces, with $d_X$ and $d_Y$ the respective metrics. We assume that all metrics are finite valued.  The Euclidean norm on the plane $\mathbb{C}$ is denoted by $|\cdot|$. 

A \textit{path} or \textit{curve} is a continuous map $\gamma\colon I \to X$, where $I$ is a compact interval. The length of the path $\gamma$ is denoted by $\ell(\gamma)$, or by $\ell_{d_X}(\gamma)$ to clarify the metric being used. The trace of $\gamma$, i.e., the set $\gamma(I)$, is denoted by $|\gamma|$. The metric space $X$ is a \textit{length space} if $d_X(x,y) = \inf_\gamma \ell(\gamma)$ for all $x,y \in X$, the infimum taken over all paths $\gamma$ whose trace contains $x$ and $y$. The metric space $X$ is  \textit{quasiconvex} if there exists $C\geq 1$ such that any two points $x,y \in X$ are in the image of a path $\gamma\colon I \to X$ satisfying $\ell(\gamma)\leq Cd_X(x,y)$. A path $\gamma$ between points $x,y \in X$ is a \emph{geodesic} if $\ell(\gamma) = d_X(x,y)$. {A subset $A \subset X$ is \textit{convex} if any two points in $A$ can be joined by a geodesic in $A$. In this case, $A$ is a length space with the restriction of the metric on $X$ and the inclusion map from $A$ to $X$ is an isometric embedding.} The diameter of a set $A \subset X$ is denoted by $\diam (A)$, or by $\diam_{d_X} (A)$ to specify the metric being used.

For any metric space and $s>0$, the \textit{Hausdorff $s$-measure} of a set $A \subset X$ is defined by
\[\mathcal{H}^s(A) = \lim_{\delta\to 0} \mathcal H^s_{\delta}(A),\]
where
\[\mathcal H^s_\delta(A) =\inf \left\{ \sum_{j=1}^\infty C(s) \diam(A_j)^s\right\} \]
and the infimum is taken over all collections of sets $\{A_j\}_{j=1}^\infty$ such that $A \subset \bigcup_{j=1}^\infty A_j$ and $\diam(A_j) < \delta$ for each $j$. Here $C(s)$ is a positive normalization constant, chosen so that the Hausdorff $n$-measure coincides with Lebesgue measure in $\R^n$. The quantity $\mathcal H^s_{\delta}(A)$ is called the \textit{$\delta$-Hausdorff $s$-content of $A$}. If we need to emphasize the metric $d_X$ being used for the Hausdorff $s$-measure, we write $\mathcal{H}_{d_X}^s$ instead of $\mathcal{H}^s$. 

A map $f\colon X \to Y$ between metric spaces is \textit{bi-Lipschitz} if there exists $L\geq 1$ such that
\[L^{-1}d_X(x,y) \leq d_Y(f(x),f(y)) \leq L d_X(x,y)\]
for all $x,y \in X$. In this case, we say that $f$ is \textit{$L$-bi-Lipschitz}. A map $f\colon X \to Y$ is \textit{co-Lipschitz} if the first of these inequalities holds for all $x,y \in X$, and \textit{Lipschitz} if the second of these inequalities holds for all $x,y \in X$. In these cases, we say that $f$ is, respectively, \textit{$L$-co-Lipschitz} and \textit{$L$-Lipschitz}.

We use $\partial X$ to denote the boundary of a manifold $X$ and $\Int(X)$ to denote its interior. Throughout this paper, the terms \textit{boundary} and \textit{interior} refer to manifold boundary and interior rather than topological boundary and interior. The following theorem can be viewed as a consequence of the Besicovitch inequality for metric spaces; see Exercise 13.25 in \cite[Section 13.F]{Pet:20}.
\begin{thm} \label{thm:besicovitch}
Let $X$ be a metric space homeomorphic to a closed topological disk with boundary $\partial X$. If $\Omega\subset \R^2$ is a closed Jordan domain such that for some $L>0$ there exists an $L$-Lipschitz map $f\colon {\partial X}\to \partial \Omega$ of non-zero topological degree, then
\[\mathcal{H}^2(X) \geq  \frac{\pi}{4L^2} \mathcal{H}^2(\Omega).\]
\end{thm}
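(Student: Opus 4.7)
The plan is to construct an $L$-Lipschitz extension $F\colon X \to \R^2$ of $f$ (with Lipschitz constant measured in the $\ell^\infty$ norm on the target) whose image covers $\Omega$, and then derive the bound from the elementary isodiametric estimate $\mathcal{L}^2(A) \leq \diam_{\ell^\infty}(A)^2$. Writing $f = (f_1, f_2)$, each coordinate $f_i\colon \partial X \to \R$ is $L$-Lipschitz, so by McShane's extension theorem it extends to an $L$-Lipschitz function $F_i\colon X \to \R$. Set $F = (F_1, F_2)$; then $\max_i |F_i(x) - F_i(y)| \leq L\,d_X(x,y)$ for all $x,y \in X$.

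Next I would verify that $F(X) \supset \Omega$ via the degree hypothesis. If not, pick $p \in \Int(\Omega) \setminus F(X)$. Then $F\colon X \to \R^2 \setminus \{p\}$ is a continuous extension of $f$, so $f = F|_{\partial X}$ is null-homotopic in $\R^2 \setminus \{p\}$, which forces its winding number around $p$ to vanish. But this winding number equals $\pm \deg(f) \neq 0$, since the inclusion $\partial \Omega \hookrightarrow \R^2 \setminus \{p\}$ has winding number $\pm 1$ around the interior point $p$. This contradiction, together with compactness of $F(X)$, gives $\Omega \subset F(X)$.

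For the measure estimate, fix $\delta > 0$ and let $\{A_j\}$ be a cover of $X$ with $\diam(A_j) < \delta$. Each $F(A_j)$ fits inside an axis-parallel square of side $L\diam(A_j)$, so $\mathcal{L}^2(F(A_j)) \leq L^2 \diam(A_j)^2$. Using $\Omega \subset F(X) = \bigcup_j F(A_j)$ and countable subadditivity of Lebesgue measure,
\[ \mathcal{H}^2(\Omega) = \mathcal{L}^2(\Omega) \leq L^2 \sum_j \diam(A_j)^2 = \frac{4L^2}{\pi}\sum_j \frac{\pi}{4}\diam(A_j)^2. \]
Passing to the infimum over $\delta$-covers and letting $\delta \to 0$, the right-hand side tends to $(4L^2/\pi)\mathcal{H}^2(X)$, yielding $\mathcal{H}^2(X) \geq (\pi/(4L^2))\,\mathcal{H}^2(\Omega)$ upon rearrangement. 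The factor $\pi/4$ enters precisely as the normalization constant $C(2)$ in the definition of Hausdorff $2$-measure.

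The most delicate step is the degree argument in the second paragraph: one must interpret $\deg(f)$ for a map between abstract topological circles and verify that non-vanishing of the degree prevents $F$ from omitting any interior point of $\Omega$. Beyond this, the argument is a routine computation combining McShane extension with the elementary isodiametric bound for $\ell^\infty$-squares.
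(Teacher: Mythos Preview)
Your proof is correct and follows essentially the same approach as the paper's: McShane coordinate-wise extension to an $L$-Lipschitz map into $(\R^2,\ell^\infty)$, surjectivity onto $\Omega$ via the degree hypothesis, and then the Hausdorff-measure bound coming from the $\ell^\infty$ Lipschitz property. The only cosmetic difference is that the paper packages your covering computation as the identity $\mathcal{H}^2_{\ell^\infty}=(\pi/4)\mathcal{H}^2_{\ell^2}$ on $\R^2$ (citing \cite{Kir:94}), whereas you unpack this directly via the isodiametric bound for axis-parallel squares; your degree argument is also spelled out in more detail than the paper's one-line assertion.
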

The inequality is optimal, as one can see by taking $X$ to be the unit square $[0,1]^2$ with the $\ell^\infty$ metric and $\Omega=[0,1]^2$  (with the {Euclidean} metric). 
\begin{proof}
Since the $\ell^\infty$-metric {does not exceed} the Euclidean $\ell^2$-metric on $\R^2$, it follows that $f\colon (\partial X,d)\to (\R^2,\ell^\infty)$ is also an $L$-Lipschitz embedding. By  the McShane--Whitney extension theorem {(see \cite[Theorem 2.3]{Hei:05})}, there exists an $L$-Lipschitz extension $\widetilde f\colon (X,d) \to (\R^2,\ell^\infty)$. Namely, if we write $f=(f_1,f_2)$, then {define} $\widetilde f=(\widetilde f_1,\widetilde f_2)$ {by} 
$$\widetilde f_i(x)= \inf_{y\in \partial X}\{f_i(y)+L d(x,y)\}$$
for $i=1,2$. Since $\widetilde f|_{\partial X}\colon \partial X\to \partial \Omega$ has non-zero degree, it follows that $\widetilde f(X)\supset \Omega$. 
Moreover, since $\widetilde f$  is $L$-Lipschitz, it follows that $\mathcal H^2_{\ell^\infty}(\Omega) \leq L^2 \mathcal H^2 (X)$, directly from the definition of Hausdorff $2$-measure. Finally, we have $\mathcal H^2_{\ell^\infty}= (\pi/4)\mathcal H^2_{\ell^2}$; see \cite[Lemma 6]{Kir:94} or \cite[pp.~2--3]{EBC:21} for a proof of this fact.
\end{proof}

\bigskip

\subsection{Gromov--Hausdorff convergence}
Let $X$ be a metric space and let $E\subset X$ and $\varepsilon>0$. We denote by $N_{\varepsilon}(E)$ the open $\varepsilon$-neighborhood of $E$. We say that $E$ is \textit{$\varepsilon$-dense} (in $X$) if for each $x\in X$ we have $d(x,E)<\varepsilon$ or equivalently $N_{\varepsilon}(E)=X$. A map $f \colon X \to Y$ (not necessarily continuous) between metric spaces is an \textit{$\varepsilon$-isometry} if $f(X)$ is $\varepsilon$-dense in $Y$ and $|d_X(x,y) - d_Y(f(x),f(y))| < \varepsilon$ for each $x,y \in X$.

We define the \textit{Hausdorff distance} of two sets $E,F\subset X$ to be the {infimal value} $r>0$ such that $E\subset N_r(F)$ and $F\subset N_r(E)$. We denote the Hausdorff distance by $d_H(E,F)$. A sequence of sets $E_n\subset X$ \textit{converges in the Hausdorff sense} to a set $E\subset X$ if $d_H(E_n,E)\to 0$ as $n\to\infty$. It is immediate that the diameters of $E_n$ converge to the diameter of $E$.

{The \textit{Gromov--Hausdorff distance} between two metric spaces $X,Y$ is defined as the infimal value $r>0$ such that there is a metric space $Z$ with subsets $\widetilde{X}, \widetilde{Y} \subset Z$ such that $X$ and $Y$ are isometric to $\widetilde{X}$ and $\widetilde{Y}$, respectively, and $d_H(\widetilde{X}, \widetilde{Y}) < r$. This is denoted by $d_{GH}(X,Y)$. We say that a sequence of metric spaces $X_n$ \textit{converges in the Gromov--Hausdorff sense} to a metric space $X$ if $d_{GH}(X_n,X) \to 0$ as $n \to \infty$. By \cite[Corollary 7.3.28]{BBI:01}, this is equivalent to the property that} there exists a sequence of $\varepsilon_n$-isometries $f_n\colon X_n\to X$, where $\varepsilon_n>0$ and $\varepsilon_n\to 0$ as $n\to\infty$. In this case, we say that $f_n$ is an \textit{approximately isometric sequence}.

We collect some immediate properties of Gromov--Hausdorff convergence. 

\begin{prop}\label{prop:gh}
Let $\{X_n\}_{n=1}^\infty$ be a sequence of compact metric spaces converging in the Gromov--Hausdorff sense to a compact metric space $X$, and consider an approximately isometric sequence $f_n\colon X_n\to X$.
\begin{enumerate}[label=\normalfont(\roman*)]
    \item\label{prop:arzela_ascoli} Suppose that $\gamma_n\colon[0,1]\to X_n$ is a sequence of paths, parametrized by rescaled arc length, such that 
    $$\liminf_{n\to\infty} \ell(\gamma_n)<\infty.$$
    Then there is a subsequence of $f_n\circ \gamma_n\colon [0,1]\to X$ that converges uniformly to a path $\gamma\colon[0,1]\to X$ with
    $$\ell(\gamma)\leq \liminf_{n\to\infty} \ell(\gamma_n).$$

    \item\label{prop:lift} Suppose, in addition, that each space $X_n$ is a length space. Then for each path $\gamma\colon[0,1] \to X$ and for each sequences of points $a_n,b_n\in X_n$ with $\lim_{n\to\infty}f_n(a_n)=\gamma(0)$ and $\lim_{n\to\infty}f_n(b_n)=\gamma(1)$ there exists a sequence of paths $\gamma_n\colon [0,1]\to X_n$ such that $\gamma_n(0)=a_n$, $\gamma_n(1)=b_n$, and $f_n\circ \gamma_n$ converges uniformly to $\gamma$. 
    
    \item\label{prop:connected} For each sequence of compact sets $E_n\subset X_n$ there exists a subsequence $E_{k_n}$ such that $f_{k_n}(E_{k_n})$ converges in the Hausdorff sense to a compact set $E\subset X$ and $\diam (E_{k_n})$ converges to $\diam (E)$. Moreover, if each set $E_n$ is connected, then $E$ is also connected.
\end{enumerate}
\end{prop}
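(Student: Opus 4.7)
My plan is to prove the three parts in sequence, each by transferring a standard metric tool to the Gromov--Hausdorff setting through the approximate isometries $f_n$. For (i), the key observation is that a curve on $[0,1]$ parametrized by rescaled arc length with length $L_n$ is $L_n$-Lipschitz. After passing to a subsequence achieving $L := \liminf_{n\to\infty} \ell(\gamma_n)$, the $\varepsilon_n$-isometry property gives
\[
d_X(f_n(\gamma_n(s)),\, f_n(\gamma_n(t))) \leq L_n |s-t| + \varepsilon_n
\]
for all $s,t \in [0,1]$, so the family $\{f_n \circ \gamma_n\}$ is eventually equicontinuous. Since $X$ is compact, Arzel\`a--Ascoli produces a uniform limit $\gamma$ which inherits the $L$-Lipschitz bound, so $\ell(\gamma) \leq L$ by the standard semicontinuity of length under uniform convergence.

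For (iii), I would invoke the Blaschke selection theorem: the hyperspace of non-empty compact subsets of the compact space $X$, under Hausdorff distance, is itself compact. Passing to a subsequence, the compact sets $f_{k_n}(E_{k_n}) \subset X$ converge in Hausdorff distance to a compact set $E \subset X$. Diameter is continuous under Hausdorff convergence and $|\diam(E_{k_n}) - \diam(f_{k_n}(E_{k_n}))| \leq 2\varepsilon_{k_n}$, yielding $\diam(E_{k_n}) \to \diam(E)$. When each $E_n$ is connected, $E$ is connected as well by the classical fact that Hausdorff limits of connected compact subsets of a compact metric space are connected: a non-trivial separation $E = A \sqcup B$ would propagate back to separations of $f_{k_n}(E_{k_n})$, and hence of $E_{k_n}$, for $n$ large, contradicting the hypothesis.

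The main obstacle is part (ii), which demands a genuine path in $X_n$ with prescribed endpoints whose image under $f_n$ tracks $\gamma$. Let $\omega$ denote a modulus of continuity for $\gamma$. For each $n$, I would choose a partition $0 = t_0 < t_1 < \cdots < t_{m_n} = 1$ of mesh $\delta_n \to 0$; for $1 \leq i \leq m_n - 1$, I would use $\varepsilon_n$-density of $f_n(X_n)$ in $X$ to select $x_n^i \in X_n$ with $d_X(f_n(x_n^i), \gamma(t_i)) < \varepsilon_n$, and set $x_n^0 = a_n$, $x_n^{m_n} = b_n$. A triangle-inequality estimate combined with the $\varepsilon_n$-isometry property gives $d_{X_n}(x_n^i, x_n^{i+1}) \leq \omega(\delta_n) + O(\varepsilon_n) + \eta_n$, where $\eta_n \to 0$ absorbs the endpoint displacements $d_X(f_n(a_n), \gamma(0))$ and $d_X(f_n(b_n), \gamma(1))$. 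Since $X_n$ is a length space, consecutive points can be joined by a path whose length exceeds this distance by at most $\varepsilon_n$; concatenating these pieces and reparametrizing affinely over $[t_i, t_{i+1}]$ produces $\gamma_n$. Uniform convergence of $f_n \circ \gamma_n$ to $\gamma$ then follows on each subinterval by combining the length bound on the corresponding piece of $\gamma_n$ with the $\omega(\delta_n)$ oscillation of $\gamma$. The delicate issue is ensuring that the forced endpoint matching does not spoil uniform convergence, which is handled by letting the first and last subintervals absorb the vanishing quantities $d_X(f_n(a_n), \gamma(0))$ and $d_X(f_n(b_n), \gamma(1))$.
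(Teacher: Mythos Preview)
Your proposal is correct and follows essentially the same route as the paper: Arzel\`a--Ascoli after transferring the Lipschitz bound through the $\varepsilon_n$-isometry for (i), a partition-and-lift construction joining nearby preimages by short paths for (ii), and Blaschke selection plus a separation argument pulled back through $f_n$ for (iii). The only cosmetic differences are that the paper uses actual geodesics in (ii) (available since compact length spaces are geodesic) where you use near-geodesics, and in (iii) you should be careful not to lean on connectedness of $f_{k_n}(E_{k_n})$ itself---$f_n$ need not be continuous---but your parenthetical explanation already gives the correct argument of pulling the separation of $E$ back to $E_{k_n}$ via the $\varepsilon_n$-isometry.
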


The proof of the proposition is elementary, based on the definitions, and the experienced reader can safely skip it. Alternatively, one can prove the statement by embedding isometrically the sequence $\{X_n\}_{n=1}^\infty$ and the space $X$ to a common space $\mathcal X$ and thus reducing Gromov--Hausdorff convergence to Hausdorff convergence in $\mathcal X$; see \cite[Property 5.23]{Pet:20}.

\begin{proof}
{By assumption,} each map $f_n$ is an $\varepsilon_n$-isometry, where $\varepsilon_n\to 0$ as $n\to\infty$.

First we prove \ref{prop:arzela_ascoli}, which follows from a version of the Arzel\`a--Ascoli theorem \cite[Theorem 2.5.14]{BBI:01}. Consider the curves $\gamma_n$ as in the statement. Then for each $p,q\in [0,1]$ we have $d_{X_n}(\gamma_n(p),\gamma_n(q)) \leq \ell(\gamma_n) |p-q|$. Since $f_n$ is a $\varepsilon_n$-isometry, we have
$$d_{X}(f_n(\gamma_n(p)), f_n(\gamma_n(q)) ) < \varepsilon_n+ d_{X_n}(\gamma_n(p),\gamma_n(q)) \leq \varepsilon_n+ \ell(\gamma_n)|p-q|.$$
By passing to a subsequence, we assume that $L=\lim_{n\to\infty}\ell(\gamma_n)<\infty$. This implies that for each $\varepsilon>0$ there exists $\delta>0$ and $N\in \N$ such that for all $n\geq N$ and $|p-q|<\delta$, we have 
$$d_{X}(f_n(\gamma_n(p)), f_n(\gamma_n(q)) )<\varepsilon.$$
Hence, the mappings $f_n\circ \gamma_n\colon [0,1]\to X$ are uniformly equicontinuous. Since $X$ is compact, by the Arzel\`a--Ascoli theorem, there exists a subsequence converging uniformly to a map $\gamma\colon [0,1]\to X$ with the property that 
$$d_X(\gamma(p),\gamma(q)) \leq L|p-q|$$
for every $p,q\in [0,1]$. Hence, $\gamma$ is a rectifiable path with $\ell(\gamma)\leq L$.

Next, we prove \ref{prop:lift}. By the uniform continuity of $\gamma$, for each $n\in \N$ there exists $\delta_n>0$ such that if $|p-q|<\delta_n$, then $d_X(\gamma(p),\gamma(q))<1/n$. We pick a finite set $Q_n\subset [0,1]$ that contains $0$ and $1$ so that each of the complementary intervals of $Q_n$ has length less than $\delta_n$. We define $\gamma_n(0)=a_n$ and $\gamma_n(1)=b_n$. By the definition of an $\varepsilon_n$-isometry, for each $q\in Q_n\setminus \{0,1\}$ there exists a point $\gamma_n(q)\in X_n$ such that $d_X(f_n(\gamma_n(q)), \gamma(q)) <\varepsilon_n$. This defines a map $\gamma_n\colon Q_n \to X_n$. If $(q_1,q_2)$ is a complementary interval of $Q_n$, we define $\gamma_n$ on $[q_1,q_2]$ to be a geodesic in $X_n$ with endpoints $\gamma_n(q_1)$ and $\gamma_n(q_2)$. This gives a path $\gamma_n\colon [0,1]\to X_n$. For each $p\in [0,1]$ there exists a complementary interval $(q_1,q_2)$ of $Q_n$ whose closure contains $p$. If $q_1,q_2 \notin \{0,1\}$, then
\begin{align*} 
    d_X( \gamma(p), f_n(\gamma_n(p))) &\leq d_X(\gamma(p),\gamma(q_1)) + d_X(\gamma(q_1), f_n(\gamma_n(q_1)))\\&\qquad\qquad+ d_X( f_n(\gamma_n(q_1)), f_n(\gamma_n(p))) \\
    &\leq 1/n + \varepsilon_n +\varepsilon_n+d_{X_n} (\gamma_n(q_1),\gamma_n(p)).
\end{align*}
{Since $\gamma_n$ is a geodesic, it follows that}
\begin{align*}
    d_X( \gamma(p), f_n(\gamma_n(p))) &\leq 1/n+2\varepsilon_n+d_{X_n} (\gamma_n(q_1),\gamma_n(q_2))\\
    &\leq 1/n+3\varepsilon_n+d_X( f_n(\gamma_n(q_1)), f_n(\gamma_n(q_2)))\\
    &\leq 1/n+3\varepsilon_n+ 2\varepsilon_n+d_X (\gamma(q_1),\gamma(q_2)) \\
    &\leq 2/n+5\varepsilon_n.
\end{align*}
If $q_1=0$, then in the same way we obtain the estimate
\begin{align*}
    d_X( \gamma(p), f_n(\gamma_n(p)))\leq 2/n+3\varepsilon_n + 2d_{X}(\gamma(0),f_n(\gamma_n(0)))
\end{align*}
and an analogous estimate holds if $q_2=1$. By assumption, the quantities $d_{X}(\gamma(0),f_n(\gamma_n(0)))$, $d_{X}(\gamma(1),f_n(\gamma_n(1)))$, and $\varepsilon_n$ converge to $0$ as $n\to\infty$. Hence, $f_n\circ \gamma_n$ converges uniformly to $\gamma$, as desired.

For part \ref{prop:connected}, the existence of the set $E$ as the Hausdorff limit of a subsequence of $f_n(E_n)$ follows from \cite[Theorem 7.3.8, p.~253]{BBI:01}, which asserts that the space of compact subsets of a compact metric space is compact in the Hausdorff topology. The convergence of the diameters is also immediate from the properties of Hausdorff convergence and the fact that $|\diam(E_n)- \diam(f_n(E_n))|\to 0$, since $f_n$ is an $\varepsilon_n$-isometry. We now show the connectedness of $E$. After passing to a subsequence, we assume that $f_n(E_n)$ converges to $E$. Suppose, on the contrary that $E$ is disconnected. Then there exists a continuous non-constant function $\varphi\colon E \to \{0,1\}$. We define $F_0=\varphi^{-1}(0)$ and $F_1=\varphi^{-1}(1)$. These are non-empty, compact, and disjoint subsets of $X$, so they have a positive distance $\delta>0$. We fix a large $n$ so that $f_n(E_n)\subset N_{\delta/4}(E)$ and $E\subset N_{\delta/4}(f_n(E_n))$. Now, we define a function $\varphi_n\colon E_n\to \{0,1\}$ by
$$\varphi_n(x)= \begin{cases} 1, & f_n(x)\in N_{\delta/4}(F_1) \\ 0, & f_n(x)\in N_{\delta/4}(F_0)\end{cases}.$$
We note that $\varphi_n$ is non-constant, since $F_0,F_1\subset N_{\delta/4}(f_n(E_n))$. Moreover, $\varphi_n$ is continuous for large $n$. Indeed, if $x,y\in E_n$ and $d_{X_n}(x,y)<\varepsilon_n$, then $d_X(f_n(x),f_n(y)) <2\varepsilon_n$. We choose a large $n$ so that  $2\varepsilon_n<\delta/2$. Then both $f_n(x)$ and $f_n(y)$ have to lie in either $N_{\delta/4}(F_0)$ or $N_{\delta/4}(F_1)$. Thus, $\varphi_n(x)=\varphi_n(y)$ and continuity follows. The existence of $\varphi_n$ contradicts the connectedness of $E_n$.  
\end{proof}

\begin{lemm}\label{lemma:boundary_convergence}
Let $X$ be a length space homeomorphic to a closed topological disk and $\{X_n\}_{n=1}^\infty$ be a sequence of length spaces homeomorphic to $X$. Suppose that there exists an approximately isometric sequence $f_n\colon X_n\to X$ of topological embeddings. Then 
\begin{align*}
    \liminf_{n\to\infty}\diam(\partial X_n) \geq \diam( \partial X).
\end{align*}
\end{lemm}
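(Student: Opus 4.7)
The plan is to argue that the Jordan curves $f_n(\partial X_n) \subset X$ approximate $\partial X$ pointwise, in the sense that $d_X(p, f_n(\partial X_n)) \to 0$ for each $p \in \partial X$; once this is in hand, pulling back near-diameter pairs on $\partial X$ and applying the $\varepsilon_n$-isometry property will finish the argument.

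The key topological ingredient is the observation that $f_n(X_n) \cap \partial X \subset f_n(\partial X_n)$. Indeed, $f_n$ is a topological embedding of the closed disk $X_n$, so $f_n(X_n) \subset X$ is a closed topological disk with manifold boundary $f_n(\partial X_n)$; if $p_0 \in \Int(X_n)$ but $f_n(p_0) \in \partial X$, then restricting $f_n$ to a small open disk neighborhood of $p_0$ would produce a continuous injective map from a $2$-manifold without boundary into $X$ whose image meets $\partial X$, contradicting invariance of domain for manifolds with boundary. Using this, I would prove the pointwise approximation by contradiction: if $f_n(\partial X_n) \cap B_X(p, \eta) = \emptyset$ along a subsequence for some $p \in \partial X$ and $\eta > 0$, I pick a connected half-disk chart neighborhood $U$ of $p$ with $U \subset B_X(p, \eta/2)$. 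Then $U \cap f_n(X_n)$ is closed in $U$ by compactness, and open in $U$ by a second application of invariance of domain: every point of $U \cap f_n(X_n)$ lies in the manifold interior of $f_n(X_n)$ (because $f_n(\partial X_n) \cap U = \emptyset$), hence has an open $2$-disk neighborhood in $f_n(X_n)$ lying in $\Int(X)$, which is then open in $X$. Connectedness of $U$ forces $U \cap f_n(X_n)$ to be either empty -- contradicting $\varepsilon_n$-density of $f_n(X_n)$ -- or all of $U$, in which case $p \in f_n(X_n) \cap \partial X \subset f_n(\partial X_n)$ by the key observation, again contradicting the hypothesis.

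To conclude, given $\delta > 0$ I choose $p, q \in \partial X$ with $d_X(p,q) > \diam(\partial X) - \delta$, apply the pointwise claim to produce $\hat{p}_n, \hat{q}_n \in \partial X_n$ with $f_n(\hat{p}_n) \to p$ and $f_n(\hat{q}_n) \to q$, and use the $\varepsilon_n$-isometry property to obtain
\[
\liminf_{n \to \infty} \diam(\partial X_n) \geq \liminf_{n\to\infty} d_{X_n}(\hat{p}_n, \hat{q}_n) = d_X(p,q) > \diam(\partial X) - \delta,
\]
after which letting $\delta \to 0$ finishes the proof. The main obstacle is the topological argument in the previous paragraph -- in particular the two applications of invariance of domain and the careful selection of a connected half-disk chart $U$ of small $d_X$-diameter. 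I note that the length-space hypothesis does not appear to play an essential role in this argument.
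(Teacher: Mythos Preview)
Your proof is correct. It differs from the paper's argument in one key respect: the paper uses the length-space hypothesis directly. Given $x \in \partial X$, by $\varepsilon_n$-density there is $y \in f_n(X_n)$ with $d(x,y) < \varepsilon_n$; the paper then takes a geodesic in $X$ from $x$ to $y$ and observes that it must meet $\partial f_n(X_n) = f_n(\partial X_n)$ at some point $z$ with $d(x,z) \leq d(x,y) < \varepsilon_n$. (This identification of boundaries, like your ``key topological ingredient,'' implicitly rests on invariance of domain.) This gives $\partial X \subset N_{\varepsilon_n}(f_n(\partial X_n))$ in one stroke, and the diameter inequality follows immediately.

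Your argument replaces the geodesic step by a clopen/connectedness argument in a small half-disk chart, which is a bit more work but, as you correctly note, does not use the length-space hypothesis at all. So your route is slightly more general, while the paper's is shorter and more direct by exploiting the geodesic structure that is already assumed.
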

In fact, the result holds without the assumption that $f_n$ is a topological embedding and one actually gets convergence of the diameters, but we do not need this generality here; see \cite[Section 7.5.2]{BBI:01} for such considerations.

\begin{proof}
Suppose that each $f_n$ is an $\varepsilon_n$-isometry, where $\varepsilon_n\to 0$. We claim that $\partial X \subset N_{\varepsilon_n}(f_n(\partial X_n))$, which implies the desired statement. To see this, note that $\partial X\subset N_{\varepsilon_n}(f_n(X_n))$ by the definition of an $\varepsilon_n$-isometry. Thus, if $x\in \partial X$, then there exists $y\in f_n(X_n)$ such that $d(x,y)<\varepsilon_n$. Consider a geodesic in $X$ connecting $x$ and $y$. Then there exists a point $z\in \partial f_n(X_n)$ lying on that geodesic such that $d(x,z)\leq d(x,y) <\varepsilon_n$. Finally, note that $\partial f_n(X_n)=f_n(\partial X_n)$, since $f_n$ is an embedding.
\end{proof} 

\bigskip 

\subsection{Modulus}
Let $X$ be a metric space and $\Gamma$ be a family of curves in $X$. A Borel function $\rho\colon X \to [0,\infty]$ is \textit{admissible} for the path family $\Gamma$ if $\int_{\gamma}\rho\, ds\geq 1$
for all locally rectifiable paths $\gamma\in \Gamma$. We define the \textit{$2$-modulus} of $\Gamma$ as 
$$\Mod \Gamma = \inf_\rho \int_X \rho^2 \, d\mathcal H^2,$$
where the infimum is taken over all admissible functions $\rho$ for $\Gamma$. By convention, $\Mod \Gamma = \infty$ if there are no admissible functions for $\Gamma$. Observe that we consider $X$ to be equipped with the Hausdorff 2-measure. This definition may be generalized by allowing for an exponent different from $2$ or a different measure, though this generality is not needed for this paper.

Let $X$ be a metric space. For each pair of disjoint continua $E,F\subset X$, we define $\Gamma^*(E,F;X)$ to be the family of rectifiable curves in $X\setminus (E\cup F)$ separating $E$ from $F$. That is, for each $\gamma\in \Gamma^*(E,F;X)$, the sets $E$ and $F$ lie in different components of $X\setminus |\gamma|$.

\begin{lemm}\label{lemma:modulus_bound_convergence}Let $\{X_n\}_{n=1}^\infty$ be a sequence of compact length spaces converging in the Gromov--Hausdorff sense to a compact length surface $X$. Moreover, suppose that $\limsup_{n\to\infty} \mathcal H^2(X_n)<\infty$. Then for each $\delta>0$ and for any sequence of pairs of disjoint continua $E_n,F_n \subset X_n$ with $\min\{\diam(E_n),\diam(F_n)\}\geq \delta$ we have
\begin{align*}
    \limsup_{n\to\infty}\Mod \Gamma^*(E_n,F_n;X_n) <\infty.
\end{align*}
\end{lemm}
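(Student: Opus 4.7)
The plan is to produce a uniform positive lower bound $L = L(\delta)$ on the lengths of all curves in $\Gamma^*(E_n, F_n; X_n)$ for all sufficiently large $n$. Once this is available, the constant function $\rho_n = L^{-1}\chi_{X_n}$ is admissible for $\Gamma^*(E_n, F_n; X_n)$, yielding
\[
\Mod \Gamma^*(E_n, F_n; X_n) \leq L^{-2} \mathcal{H}^2(X_n),
\]
which is uniformly bounded by the hypothesis $\limsup_n \mathcal{H}^2(X_n) < \infty$.

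I establish the length lower bound by contradiction. Let $f_n \colon X_n \to X$ be an approximately isometric sequence and suppose there exist $\gamma_n \in \Gamma^*(E_n, F_n; X_n)$ with $\ell(\gamma_n) \to 0$. After parametrizing each $\gamma_n$ by rescaled arc length, \Cref{prop:gh}\ref{prop:arzela_ascoli} gives, along a subsequence, uniform convergence of $f_n \circ \gamma_n$ to a constant path at some point $p \in X$; in particular, $f_n(|\gamma_n|)$ Hausdorff-converges to $\{p\}$. Passing to a further subsequence and using \Cref{prop:gh}\ref{prop:connected}, I obtain continua $E, F \subset X$ of diameter at least $\delta$ with $f_n(E_n) \to E$ and $f_n(F_n) \to F$ in the Hausdorff sense.

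Because $X$ is a surface, I may select a topological disk neighborhood $U \subset X$ of $p$ with $\diam(U) < \delta/2$, which forces $E \setminus \overline{U}$ and $F \setminus \overline{U}$ to be non-empty. Since removing a closed disk from a connected surface yields a connected space, I choose a path $\alpha$ in $X \setminus \overline{U}$ from some $a \in E \setminus \overline{U}$ to some $b \in F \setminus \overline{U}$; this path lies at positive distance $\eta > 0$ from $p$. Picking $a_n \in E_n$ and $b_n \in F_n$ with $f_n(a_n) \to a$ and $f_n(b_n) \to b$ (possible by the Hausdorff convergence of $f_n(E_n)$ and $f_n(F_n)$), I apply \Cref{prop:gh}\ref{prop:lift} to lift $\alpha$ to paths $\alpha_n \colon [0,1] \to X_n$ from $a_n$ to $b_n$ with $f_n \circ \alpha_n \to \alpha$ uniformly.

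Since $\gamma_n$ separates $E_n$ from $F_n$ in $X_n$ and $\alpha_n$ joins $E_n$ to $F_n$, the traces $|\alpha_n|$ and $|\gamma_n|$ must intersect at some point $x_n$. The inclusion $f_n(x_n) \in f_n(|\gamma_n|)$ together with $f_n(|\gamma_n|) \to \{p\}$ forces $f_n(x_n) \to p$, while the uniform convergence $f_n \circ \alpha_n \to \alpha$ forces $f_n(x_n)$ to approach the set $|\alpha|$, which lies at distance $\eta > 0$ from $p$. This contradiction yields the desired length lower bound. The most delicate step is the topological one: one must exploit the manifold structure of $X$ to find a disk neighborhood of $p$ whose removal leaves a connected surface still meeting both $E$ and $F$ macroscopically. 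In an arbitrary length space this step can fail outright, since a single point may serve as a cut point separating two continua of fixed diameter, which is precisely why the surface hypothesis on the limit $X$ is essential.
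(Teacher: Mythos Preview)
Your proof is correct and follows essentially the same approach as the paper: both argue by contradiction that curves in $\Gamma^*(E_n,F_n;X_n)$ have a uniform length lower bound, then use the constant admissible function. The only difference is in the topological step: the paper simply invokes that $X\setminus\{p\}$ is path-connected for a surface $X$ and takes $\alpha$ there, whereas you excise a small closed disk neighborhood $\overline{U}$ of $p$ and use that $X\setminus\overline{U}$ is connected; both arguments exploit the surface hypothesis in the same spirit and lead to the identical contradiction via \Cref{prop:gh}.
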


\begin{proof}
We claim that that there exists $\eta>0$, depending on $\delta$ but not on $n$, such that if $E_n,F_n\subset X_n$ is a pair of disjoint continua satisfying $\min \{\diam(E_n), \diam(F_n)\}\geq \delta$, then $\ell(\gamma)\geq \eta$ for every $\gamma\in \Gamma^*(E_n,F_n;X_n)$. Assuming that this is the case, we see that the function $\rho=\eta^{-1}$ is admissible for $\Gamma^*(E_n,F_n;X_n)$, so 
\begin{align*}
    \Mod \Gamma^*(E_n,F_n;X_n) \leq \eta^{-2} \mathcal H^2(X_n)
\end{align*}
for each $n\in \N$. Passing to the limit gives the desired conclusion.

In order to prove the claim, we argue by contradiction. Let $f_n\colon X_n\to X$ be a sequence of $\varepsilon_n$-isometries, where $\varepsilon_n\to 0$. Suppose that there exist sequences of disjoint continua $E_n,F_n\subset X_n$ with $\min\{\diam(E_n),\diam(F_n)\}\geq \delta$ and a sequence of paths $\gamma_n\in \Gamma^*(E_n,F_n;X_n)$ with $\ell(\gamma_n)\to 0$ as $n\to\infty$.  By Proposition \ref{prop:gh} \ref{prop:arzela_ascoli}, after reparametrizing $\gamma_n$, there exists a subsequence of $f_n\circ \gamma_n$ that converges uniformly to a constant path in $X$, i.e., to a point $x_0\in X$. After passing to a further subsequence, by Proposition \ref{prop:gh} \ref{prop:connected}  the sets $f_n(E_n)$ and $f_n(F_n)$ converge in the Hausdorff sense to continua $E$ and $F$, respectively, with $\min\{\diam(E),\diam(F)\}\geq \delta$. 

Since $X$ is a surface, $X\setminus \{x_0\}$ is path connected. Thus, there exists a path $\eta\colon [0,1]\to X\setminus \{x_0\}$ with $\eta(0)\in E$ and $\eta(1)\in F$. By the Hausdorff convergence of $f_n(E_n)$ and $f_n(F_n)$ to $E$ and $F$, respectively, there exist points $a_n\in E_n$ and $b_n\in F_n$ such that $f_n(a_n)$ converges to $\eta(0)$ and $f_n(b_n)$ converges to $\eta(1)$. By Proposition \ref{prop:gh} \ref{prop:lift}, there exist paths $\eta_n\colon [0,1]\to X_n$ such that $\eta_n(0)=a_n\in E_n$, $\eta_n(1)=b_n\in F_n$, and $f_n\circ \eta_n$ converges uniformly to $\eta$. 

Since $\gamma_n$ separates $E_n$ from $F_n$ and $\eta_n$ connects $E_n$ and $F_n$, the paths $\gamma_n$ and $\eta_n$ intersect each other for each $n\in \N$. The uniform convergence of $f_n\circ \gamma_n$ and $f_n\circ \eta_n$ to $x_0$ and $\eta$, respectively, implies that $\eta$ intersects the point $x_0$. This is a contradiction. 
\end{proof}

\bigskip

\subsection{Metric Sobolev spaces}

Let $h\colon X\to Y$ be a mapping between metric spaces. We say that a Borel function $g\colon X\to [0,\infty]$ is an \textit{upper gradient} of $h$ if 
\begin{align}\label{ineq:upper_gradient}
    d_Y(h(a),h(b)) \leq \int_{\gamma} g \, ds
\end{align}
for all $a,b\in X$ and every locally rectifiable path $\gamma$ in $X$ joining $a$ and $b$. This is called the \textit{upper gradient inequality}. If, instead the above inequality holds for all curves $\gamma$ outside a curve family of $2$-modulus zero, then we say that $g$ is a \textit{weak upper gradient} of $h$. In this case, there exists a curve family $\Gamma_0$ with $\Mod \Gamma_0=0$ such that all paths outside $\Gamma_0$ and all subpaths of such paths satisfy the upper gradient inequality.

{We equip the space $X$ with the Hausdorff $2$-measure $\mathcal{H}^2$.} Let $L^p(X)$ denote the space of $p$-integrable Borel functions from $X$ to the extended real line $\widehat{\mathbb{R}}$, where two functions are identified if they agree $\mathcal{H}^2$-almost everywhere. The Sobolev space $N^{1,p}(X,Y)$ is defined as the space of Borel mappings $h \colon X \to Y$ with a weak upper gradient $g$ in $L^p(X)$ such that the function $x \mapsto d_Y(y,h(x))$ is in $L^p(X)$ for some $y \in Y$, again where two functions are identified if they agree almost everywhere. The spaces $L_{\loc}^p(X)$ and $N_{\loc}^{1,p}(X, Y)$ are defined in the obvious manner. See the monograph \cite{HKST:15} for background on metric Sobolev spaces.

We now restrict to mappings $h\colon X \to Y$, where $X$ and $Y$ are metric surfaces with locally finite Hausdorff $2$-measure. We use the facts that topological surfaces are second countable, separable, and they admit an exhaustion by precompact open sets. Thus, the Hausdorff $2$-measure is $\sigma$-finite if it is locally finite.

\begin{lemm}\label{lemma:weak_upper_gradient_path_integral}
Let $X,Y$ be metric surfaces with locally finite Hausdorff $2$-measure, $h\colon X\to Y$ be a mapping in $N^{1,2}_{\loc}(X,Y)$, and $g\in L^2_{\loc}(X)$ be a weak upper gradient of $h$. 
\begin{enumerate}[label=\normalfont(\roman*)]
    \item\label{lemma:weak_upper_gradient_line_integral}There exists an exceptional family of curves $\Gamma_0$ with $\Mod\Gamma_0=0$ such that for any Borel function $\rho \colon Y\to [0,\infty]$  and for all locally rectifiable curves $\gamma\notin \Gamma_0$ we have
\begin{align*}
    \int_{h\circ \gamma} \rho\, ds \leq \int_{\gamma} (\rho\circ h)g \, ds. 
\end{align*}
    \item\label{lemma:weak_upper_gradient_measure_ineq} Suppose, in addition, that $h$ is continuous and there exists $K>0$ such that for every Borel set $E\subset Y$ we have
    $$\int_{h^{-1}(E)} g^2 \, d\mathcal H^2 \leq K\mathcal H^2(E).$$
    Then, for every curve family $\Gamma$ in $X$ we have
    $$\Mod \Gamma \leq K \Mod h(\Gamma).$$
\end{enumerate}
\end{lemm}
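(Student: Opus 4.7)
The plan is to prove (i) first and then deduce (ii) from it combined with the measure hypothesis.

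For part (i), I would begin by extracting a universal exceptional family from the weak-upper-gradient structure of $g$. By definition there is a curve family $\Gamma_0$ with $\Mod\Gamma_0 = 0$ such that \eqref{ineq:upper_gradient} holds along every subcurve of any $\gamma \notin \Gamma_0$; since $g \in L^2_{\loc}(X)$, I may enlarge $\Gamma_0$ (still of modulus zero) to also contain every locally rectifiable curve on which $g$ fails to be integrable. For $\gamma \notin \Gamma_0$ parametrized by arc length, applying \eqref{ineq:upper_gradient} on short subarcs shows that $h \circ \gamma$ is absolutely continuous with metric speed $|(h \circ \gamma)'|(t) \leq g(\gamma(t))\,|\gamma'|(t)$ for almost every $t$. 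The standard length-parametrization formula $\int_{h \circ \gamma}\rho\,ds = \int \rho(h(\gamma(t)))\,|(h \circ \gamma)'|(t)\,dt$, combined with this pointwise bound, yields the desired inequality for any Borel $\rho \colon Y \to [0,\infty]$. Crucially, $\Gamma_0$ has been chosen from the upper-gradient and integrability structure alone, hence independently of $\rho$.

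For part (ii), given any density $\rho$ admissible for $h(\Gamma)$ I would set $\tilde\rho = (\rho \circ h)\,g$ on $X$. For each locally rectifiable $\gamma \in \Gamma \setminus \Gamma_0$, the absolute continuity from part (i) together with the continuity of $h$ makes $h \circ \gamma$ a locally rectifiable member of $h(\Gamma)$, so the admissibility of $\rho$ combined with part (i) gives $\int_\gamma \tilde\rho\,ds \geq \int_{h \circ \gamma}\rho\,ds \geq 1$. Since $\Mod\Gamma_0 = 0$ implies $\Mod\Gamma = \Mod(\Gamma \setminus \Gamma_0)$, we obtain $\Mod\Gamma \leq \int_X \tilde\rho^2\,d\mathcal{H}^2$. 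To convert this into the bound $K\int_Y \rho^2\,d\mathcal{H}^2$, I would approximate $\rho^2$ from below by non-negative simple functions $\phi_n = \sum_k a_{n,k}\chi_{E_{n,k}}$ on $Y$ and invoke the measure hypothesis:
\begin{align*}
\int_X (\phi_n \circ h)\,g^2\,d\mathcal{H}^2 &= \sum_k a_{n,k}\int_{h^{-1}(E_{n,k})} g^2\,d\mathcal{H}^2 \\
&\leq K\sum_k a_{n,k}\,\mathcal{H}^2(E_{n,k}) = K\int_Y \phi_n\,d\mathcal{H}^2.
\end{align*}
Monotone convergence then yields $\int_X (\rho \circ h)^2 g^2\,d\mathcal{H}^2 \leq K\int_Y \rho^2\,d\mathcal{H}^2$, and infimizing over admissible $\rho$ gives the claim.

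The principal technical subtlety lies in part (i): extracting the pointwise speed bound $|(h \circ \gamma)'|(t) \leq g(\gamma(t))\,|\gamma'|(t)$ from the upper gradient inequality applied on small subarcs, and ensuring the resulting exceptional family is truly universal in the Borel test function $\rho$. Part (ii) is then a fairly direct combination of part (i) with a layer-cake/monotone-convergence argument; the only mild care needed is the observation that $\tilde\rho$ being admissible off a modulus-zero family suffices to bound $\Mod\Gamma$, which follows from the standard subadditivity properties of modulus.
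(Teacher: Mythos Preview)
Your proposal is correct and follows essentially the same route as the paper. The paper defers part~(i) to \cite[Proposition 6.3.3]{HKST:15}, whereas you sketch its proof via the metric-speed bound $|(h\circ\gamma)'|\le g(\gamma)\,|\gamma'|$; for part~(ii) both arguments set $\tilde\rho=(\rho\circ h)g$, use (i) to obtain admissibility on $\Gamma\setminus\Gamma_0$, and pass the measure hypothesis through simple functions and monotone convergence to bound $\int_X(\rho\circ h)^2g^2\,d\mathcal H^2$ by $K\int_Y\rho^2\,d\mathcal H^2$.
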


Here, if $h\colon X \to Y$ is continuous and $\Gamma$ is a curve family in $X$, then $h(\Gamma)$ denotes the curve family $\{h\circ \gamma :\gamma\in \Gamma\}$.

\begin{proof}
Part \ref{lemma:weak_upper_gradient_line_integral} follows from \cite[Proposition 6.3.3, p.~157]{HKST:15}, which assumes that $\int_{\gamma}g\, ds<\infty$ and the upper gradient inequality \eqref{ineq:upper_gradient} holds for all subpaths of $\gamma$. Since $g$ lies in $L^2_{\loc}(X)$ and $X$ can be written as a countable union of open sets of finite Hausdorff $2$-measure, we conclude that there exists a curve family $\Gamma_0$ with modulus zero such that the required conditions hold for paths $\gamma\notin \Gamma_0$.

For \ref{lemma:weak_upper_gradient_measure_ineq}, note that the continuity assumption implies that for any Borel function $\rho \colon Y\to [0,\infty]$, the function $\rho\circ h$ is also Borel measurable. Moreover, by monotone convergence we have
$$\int_{X} (\rho\circ h) g^2 \, d\mathcal H^2 \leq K \int_{Y} \rho \, d\mathcal H^2.$$
Let $\rho$ be an admissible function for $h(\Gamma)$. By \ref{lemma:weak_upper_gradient_line_integral}, for $\gamma\in \Gamma\setminus \Gamma_0$ we have
$$1\leq \int_{h\circ \gamma}\rho \, ds \leq \int_{\gamma}(\rho\circ h) g \, ds. $$
Thus, $(\rho\circ h) g$ is a Borel function that is admissible for $\Gamma\setminus \Gamma_0$. It follows that
$$\Mod \Gamma = \Mod(\Gamma\setminus \Gamma_0) \leq \int_X (\rho\circ h)^2 g^2 \, d\mathcal H^2 \leq K \int_Y \rho^2\, d\mathcal H^2.$$
Infimizing over $\rho$ gives the conclusion.
\end{proof}

It is a non-trivial result of  Williams \cite[Theorem 1.1]{Wil:12} that the converse of  Lemma \ref{lemma:weak_upper_gradient_path_integral} \ref{lemma:weak_upper_gradient_measure_ineq} is also true. This result will be used in the proof of Theorem \ref{theorem:definitions_qc_1}.

\begin{thm}[Definitions of quasiconformality]\label{theorem:qc_definitions_williams}
Let $X,Y$ be metric surfaces with locally finite Hausdorff $2$-measure and let $h\colon X\to Y$ be a continuous mapping. The following are equivalent. 
\begin{enumerate}[label=\normalfont(\roman*)]
    \item \label{item:qc_equivalence_i_williams} $h\in N^{1,2}_{\loc}(X,Y)$ and there exists a weak upper gradient $g$ of $h$ such that for every Borel set $E\subset X$ we have $$\int_{h^{-1}(E)}g^2 \, d\mathcal H^2 \leq K \mathcal H^2(E).$$
    \item \label{item:qc_equivalence_ii_williams} For every curve family $\Gamma$ in $X$ we have
    $$\Mod \Gamma \leq K \Mod h(\Gamma).$$
\end{enumerate}
\end{thm}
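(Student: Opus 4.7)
The forward direction (i) $\Rightarrow$ (ii) has already been established as Lemma \ref{lemma:weak_upper_gradient_path_integral} \ref{lemma:weak_upper_gradient_measure_ineq} and requires no further work. Thus the theorem reduces entirely to proving the reverse implication.

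For the direction (ii) $\Rightarrow$ (i), the plan is to invoke Williams' theorem \cite[Theorem 1.1]{Wil:12} directly. Williams establishes precisely this implication in the broad setting of continuous mappings between metric measure spaces: assuming the $2$-modulus inequality for all curve families in the source, he produces a Sobolev representative in $N^{1,2}_{\loc}$ together with a weak upper gradient $g$ satisfying the Lusin-type integral inequality against the pullback of the target measure. The preparatory step in the proof would be to verify that the hypotheses required by Williams' formulation are met by $(X,Y)$: both spaces are equipped with Hausdorff $2$-measure, which is Borel regular; these measures are locally finite by assumption and $\sigma$-finite because topological surfaces are second countable; and $h$ is continuous, which is the only regularity hypothesis imposed on the map.

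The principal obstacle, which is resolved inside Williams' argument rather than in our write-up, is the transition from global information about curve families to the pointwise construction of a weak upper gradient with a quantitative control over preimages of Borel sets. One would ordinarily be tempted to attempt a direct construction here—for instance, by extracting $g$ as a minimizer for a capacity/modulus problem and then using a Fuglede-type selection to promote admissibility along almost every curve to the upper gradient inequality—but since Williams carries out such an argument in full generality in the abstract metric setting, no modification is necessary here.

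In summary, the proof proposal is: cite Lemma \ref{lemma:weak_upper_gradient_path_integral} \ref{lemma:weak_upper_gradient_measure_ineq} for the easy direction, verify the measure-theoretic hypotheses in our setting (Borel regularity, local finiteness, $\sigma$-finiteness, continuity of $h$), and then invoke Williams' theorem for the hard direction. No new metric-space analysis is needed beyond what has already been developed earlier in the paper.
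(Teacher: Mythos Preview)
Your proposal is correct and matches the paper's own treatment: the paper does not prove this theorem but cites Lemma \ref{lemma:weak_upper_gradient_path_integral} \ref{lemma:weak_upper_gradient_measure_ineq} for the forward direction and Williams \cite[Theorem 1.1]{Wil:12} for the reverse, noting that Williams' argument needs only local finiteness of the measures and separability of the spaces. One small correction: Williams' result is actually stated for homeomorphisms, not continuous maps in general; the paper explicitly observes that his proof nonetheless applies identically to continuous mappings, and you should make the same remark rather than asserting that Williams already covers this case.
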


In fact, the argument of Williams \cite[Proof of Theorem 1.1]{Wil:12} is more general and relies on the local finiteness of the measures and the separability of the spaces. We note that the referenced result is stated for homeomorphisms, but the proof applies identically to the case of continuous mappings.

\bigskip

\subsection{Polyhedral surfaces} \label{sec:polyhedral_surfaces}

A \textit{$1$-dimensional polyhedral space} is a locally finite connected graph, considered as a metric space by assigning a length to each edge and taking the corresponding length metric. Next, we define a {$2$-dimensional polyhedral space} in the following manner. Let $\Gamma$ be a $1$-dimensional polyhedral space and $\mathcal{P}$ a collection of planar polygonal domains homeomorphic to a closed disk. Each $P \in \mathcal{P}$ is equipped with the length metric induced by the Euclidean metric on $\mathbb{C}$, {which we denote by $d_P$}. The boundary $\partial P$ is subdivided into finitely many non-overlapping line segments called \textit{edges}. For each $P \in \mathcal{P}$, let $\psi_P \colon \partial P \to \Gamma$ be an injective mapping such that each edge of $\partial P$ is mapped by arc length onto an edge of $\Gamma$. {Assume that each point in $\Gamma$ is in the image of at least one and finitely many maps $\psi_P$.} We obtain a metric space $S$ by gluing the disjoint union of the sets in $\mathcal{P}$ with $\Gamma$ along the maps $\psi_P$. More precisely, we define $\sim$ to be the equivalence relation on $(\bigsqcup \mathcal{P})\sqcup \Gamma$ generated by declaring $x \sim y$ if $x \in \partial P$ for some $P\in \mathcal{P}$, $y \in \Gamma$,  and $\psi_P(x) = y$. Take $S = (\bigsqcup \mathcal{P})\sqcup \Gamma/\sim$. Define the metric $d$ on $S$ by
\[d(x,y) = \inf \sum_{k=1}^n d_{P_k}(x_k,y_k), \]
the infimum taken over all chains of points $x_1,y_1, \ldots, x_n,y_n$ such that $x_k,y_k$ belong to the same polygonal domain $P_k$ for all $k \in \{1,\ldots,n\}$ and $y_k \sim x_{k+1}$ for all $k \in \{1,\ldots, n-1\}$, and $x = x_1$ and $y = y_n$. It is straightforward to verify that $d$ is indeed a metric. We say that $S$ equipped with the metric $d$ is a \textit{$2$-dimensional polyhedral space} and the metric $d$ is called the \textit{polyhedral metric} on $S$. We identify the graph $\Gamma$ with the subset $\bigsqcup \partial P  / \sim$ of $S$ in the natural way. {Observe that each polygon $P \in \mathcal{P}$ is locally isometric to its image in $S$ at every non-vertex point.}  Each polygon $P \in \mathcal{P}$ is called a \textit{face} of $S$, while the vertices and edges of {each $P$} are called the \textit{vertices} and \textit{edges}, respectively, of $S$. 

A \textit{polyhedral surface} is a $2$-dimensional polyhedral space homeomorphic to a $2$-manifold with boundary. Each point in a polyhedral surface has a neighborhood isometric to a ball in the Euclidean cone over a circle or closed interval; this property can also be taken as a definition of polyhedral surface \cite{LebPet:15}. In particular, a polyhedral surface is locally isometric to a subset of the closed half-plane at each non-vertex point. See \cite[Section I.5.19]{BH:99}, \cite[Section 3.1--3.2]{BBI:01} and \cite{Tro:86} for an overview of polyhedral spaces and the operation of gluing. 

\subsubsection{Complex structure}\label{sec:complex} 
It is known that each orientable polyhedral surface $X$ has a complex structure that agrees with the complex structure of the polygons that constitute it \cite[II.4, pp.~66--67]{Cou:77}. More precisely, suppose that $X=\bigsqcup P_i /\sim$, where each $ P_i$ is a closed planar polygonal domain and the boundaries of the polygons are identified according to some equivalence relation $\sim$ as above. By the orientability of $X$, we may assign an orientation to each $\partial P_i$ so that the orientations of adjacent polygons are compatible; that is, orientations of neighboring edges from different polygons are pointing in opposite directions. Thus, by replacing each $P_i\subset \C$ with a reflected copy if necessary, we may assume that the orientation of $P_i$ as a subset of $X$ is the positive one when $P_i$ is considered as a subset of the plane. 

We let $\varphi_i$ be a homeomorphism, acting as the identity map, that identifies $P_i$ as a subset of $X$ with itself as a subset of $\C$. Then $\varphi_i$ serves as a local chart at each interior point of $P_i$ and at each boundary point of $X$ that is contained in $\partial P_i$ and is not a vertex. If the polygons $P_i,P_j\subset X$ share an open boundary segment $J$, then there exists a local chart $\varphi_J$ in a neighborhood $U\subset X$ of $J$ such that, up to orientation-preserving isometries of the plane, $\varphi_J$ agrees with $\varphi_i$ and $\varphi_j$ in $U\cap \Int(P_i)\subset X$ and $U\cap \Int(P_j)\subset X$, respectively. In particular, the transition from $\varphi_i(\Int(P_i))$ and $\varphi_j(\Int(P_j))$ to $\varphi_J(U)$ is conformal. Finally, at each vertex  $v\in X$ consider a small $r>0$ such that each face $P_i$ that has $v$ as a vertex contains a circular sector $S_i$ of radius $r$, centered at $v$, and whose two sides are contained in two edges of $P_i$. Let $\theta=\theta(v)$ be the sum of the angles of these sectors. To each of these sectors, we apply a map of the form $z\mapsto z^{\alpha}$, where $\alpha= \theta/2\pi$ if $v$ is an interior point of $X$ and $\alpha=\theta/\pi$ if $v$ is a boundary point of $X$; more precisely, consider the maps $(\varphi_i -\varphi_i(v))^{\alpha}$ mapping the sector $S_i\subset X$ onto a sector $S_i'$ centered at $0$ in the plane. Then the sectors $S_i'$ may be rotated and fitted together to form a disk of radius $r^{\alpha}$ if $v\in \Int(X)$ and a semidisk if $v\in \partial X$. In this way we can also define conformal coordinates at the vertices.  In summary, every orientable polyhedral surface $X$ is a Riemann surface with the described natural conformal structure. Thus we call $X$ a polyhedral Riemann surface.

A homeomorphism $h\colon X\to Y$ between Riemann surfaces is conformal if it is complex differentiable in local coordinates. Specifically, at each $x\in  X$ we require that if $\varphi$ is a conformal chart from a neighborhood of $x$ in $X$ into $\C$ and $\psi$ is a conformal chart from a neighborhood of $h(x)$ in $Y$ into $\C$, then $\psi\circ h\circ \varphi^{-1}$  is a conformal map defined on a neighborhood of $\varphi(x)$ in $\C$. If $x\in \partial X$, this definition entails the requirement that $\psi\circ h\circ \varphi^{-1}$ has a conformal extension in a neighborhood of $\varphi(x)$.

Let $Y$ be a polyhedral Riemann surface. If $Y$ homeomorphic to a topological 2-sphere, then, by the uniformization theorem \cite[Theorem 15.12, p.~242]{Mar:19}, there exists a conformal homeomorphism $h$ from the Riemann sphere $\widehat{\C}$ to $Y$. If $Y$ is a closed topological disk, then we obtain a conformal homeomorphism from $Y$ to $\overline{\mathbb{D}}$ in the following way. Glue $Y$ to an isometric copy of itself along the boundary to obtain a polyhedral sphere $\widetilde{Y}$. By the uniformization theorem, there is a conformal homeomorphism $h \colon \widetilde{Y} \to \widehat{\mathbb{C}}$. Define the involution $\varphi \colon \widetilde{Y} \to \widetilde {Y}$ by mapping each point in $Y$ to the same point in its isometric copy. Then $g = h \circ \varphi \circ h^{-1}$ is an anti-conformal homeomorphism of $\widehat{\mathbb{C}}$ and thus is an anti-M\"obius transformation with fixed set $h(\partial Y)$. This implies that $h(\partial Y)$ is a circle. By normalizing $h$, we ensure that $h(\partial Y)$ is the equator of $\widehat{\mathbb{C}}$. Thus $h$ restricts to a conformal homeomorphism from $Y$ to the upper hemisphere.  We summarize these facts below.

\begin{thm}[Uniformization theorem]\label{theorem:uniformization}
Let $\Omega=\widehat\C$ or $\Omega=\overline{\mathbb D}$. If $Y$ is a polyhedral Riemann surface homeomorphic to $\Omega$, then there exists a conformal homeomorphism from $\Omega$ onto $Y$.
\end{thm}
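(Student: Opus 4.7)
The plan is to reduce both cases to the classical uniformization theorem for compact Riemann surfaces by exploiting the complex structure on $Y$ described in Section \ref{sec:complex}. The sphere case is essentially immediate. The disk case requires a doubling construction together with a symmetry argument to force the boundary $\partial Y$ to map onto a round circle.

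Case $\Omega = \widehat{\C}$: Since $Y$ is closed and orientable, Section \ref{sec:complex} endows $Y$ with the structure of a compact Riemann surface of genus zero. The classical uniformization theorem (cited from \cite{Mar:19}) then yields a conformal homeomorphism $h \colon \widehat{\C}\to Y$, and we are done.

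Case $\Omega = \overline{\mathbb D}$: I would first construct the \emph{double} of $Y$. Take a disjoint isometric copy $Y'$ of $Y$ and glue $Y$ to $Y'$ along the boundary via the identity on $\partial Y$. Call the resulting space $\widetilde Y$. I would verify that $\widetilde Y$ is an orientable polyhedral surface homeomorphic to $\widehat{\C}$; orientability comes from choosing the opposite orientation on $Y'$ so that boundary edges are compatibly identified, and the polyhedral structure is immediate since each boundary edge of $Y$ is glued to the corresponding edge of $Y'$. Thus $\widetilde Y$ carries a complex structure as in Section \ref{sec:complex}. By the previous case, there is a conformal homeomorphism $h\colon \widehat{\C}\to \widetilde Y$.

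Next, define the involution $\varphi\colon\widetilde Y\to\widetilde Y$ that is the identity on $\partial Y$ and swaps each interior point of $Y$ with its counterpart in $Y'$. The key point to check is that $\varphi$ is \emph{anti-conformal} with respect to the natural complex structure on $\widetilde Y$: on interior points of $Y$ or $Y'$ it agrees with the reflection that swaps the two copies of a given face, which is an anti-holomorphic Euclidean isometry; across a non-vertex boundary edge of $\partial Y$, using the chart $\varphi_J$ from Section \ref{sec:complex}, the involution becomes complex conjugation; at a vertex $v\in\partial Y$ one verifies directly in the branched chart $(\varphi_i-\varphi_i(v))^{\alpha}$ that the swap conjugates to $z\mapsto\bar z$. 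Then $g:=h^{-1}\circ\varphi\circ h\colon \widehat{\C}\to\widehat{\C}$ is an anti-conformal involution, hence an anti-M\"obius transformation, and its fixed point set is a round circle $C\subset\widehat{\C}$ with $h(C)=\partial Y$. Post-composing $h$ with a M\"obius transformation of $\widehat{\C}$ mapping $C$ to the equator (and the component corresponding to $Y$ to the upper hemisphere, which is conformally equivalent to $\overline{\mathbb D}$), I obtain the required conformal homeomorphism from $\overline{\mathbb D}$ onto $Y$.

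The step I expect to be the main technical obstacle is verifying that $\varphi$ is anti-conformal at the vertices on $\partial Y$, since there the complex charts involve a fractional power $z\mapsto z^{\alpha}$ with $\alpha=\theta/\pi$ depending on the cone angle; one must confirm that the reflection extends consistently through these branched charts. Everything else reduces either to the construction of Section \ref{sec:complex} or to classical facts about M\"obius and anti-M\"obius transformations.
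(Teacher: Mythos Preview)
Your proposal is correct and follows essentially the same approach as the paper: the sphere case is immediate from the classical uniformization theorem, and for the disk case the paper also doubles $Y$ to a polyhedral sphere $\widetilde Y$, uniformizes, and uses that the swap involution conjugates to an anti-M\"obius transformation of $\widehat{\C}$ whose fixed set $h(\partial Y)$ is therefore a round circle. The paper states the anti-conformality of $\varphi$ without the chart-by-chart verification you outline, so your added detail at edge and vertex charts is a welcome elaboration rather than a deviation.
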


A polyhedral surface with its polyhedral metric becomes a surface of locally finite Hausdorff $2$-measure. We endow the Riemann sphere $\widehat \C$ with the spherical metric $\sigma$, which is given by {the length element} {$2(1+|z|^2)^{-1}\, ds$} in planar coordinates {$z = x+iy$} through stereographic projection. We also consider the spherical measure given by the density ${d\sigma =} 4(1+|z|^2)^{-2}dx\,dy$, which agrees with the Hausdorff $2$-measure on $\widehat\C$ arising from the spherical metric. Similarly, we endow the closed unit disk $\overline{\mathbb D}$ with the planar Euclidean metric {and the Lebesgue measure, which agrees with the Hausdorff $2$-measure}. 

\begin{lemm}\label{lemma:polyhedral_conformal}
Let $\Omega=\widehat\C$ or $\Omega=\overline{\mathbb D}$. Suppose that $Y$ is a polyhedral Riemann surface homeomorphic to $\Omega$ and $h\colon \Omega\to Y$ is a conformal homeomorphism. {There exist Borel measurable functions $|Dh|\colon \Omega\to [0,\infty)$ and $|Dh^{-1}| \colon Y\to [0,\infty)$ such that the following hold.} 
\begin{enumerate}[label=\normalfont(\roman*)]
    \item\label{lemma:polyhedral_conformal_upper}  {$|Dh|$ and $|Dh^{-1}|$ are} upper gradients of $h$ and $h^{-1}$, respectively.
    \item\label{lemma:polyhedral_conformal_change} For all Borel sets $E\subset \Omega$ and $F\subset Y$ we have
\begin{align*}
    \int_E |Dh|^2 \, d\mathcal H^2 = \mathcal H^2(h(E)) \quad \textrm{and}\quad \int_F |Dh^{-1}|^2 \, d\mathcal H^2 = \mathcal H^2(h^{-1}(F)).
\end{align*}
    \item\label{corollary:polyhedral_conformal} For every curve family $\Gamma$ in $\Omega$ we have
\begin{align*}
    \Mod \Gamma=\Mod h(\Gamma).
\end{align*}
\end{enumerate}
\end{lemm}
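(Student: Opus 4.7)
The plan begins by handling the singular set of $Y$. Let $V \subset Y$ denote the vertex set, which is locally finite, and set $V' = h^{-1}(V) \subset \Omega$; both sets have $\mathcal{H}^2$-measure zero. On $Y \setminus V$, the polygonal charts of \Cref{sec:complex} are local isometries onto open subsets of $(\C, |\cdot|)$, so the polyhedral metric $d_Y$ and $\mathcal{H}^2$ agree with the Euclidean metric and planar area in these charts; the same holds for $\Omega$. Since $h$ is conformal in the Riemann surface sense, in any chart pair $(\varphi, \psi)$ the composition $\psi \circ h \circ \varphi^{-1}$ is classically conformal, and so on $\Omega \setminus V'$ the map $h$ is locally a planar conformal homeomorphism. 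I would define $|Dh|(z)$ for $z \in \Omega \setminus V'$ to be the modulus of this complex derivative (which is independent of the chart pair) and extend by $0$ across $V'$, with $|Dh^{-1}|$ defined analogously on $Y$; both functions are then Borel measurable.

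To verify \ref{lemma:polyhedral_conformal_upper}, I would take a rectifiable path $\gamma\colon[0,1]\to\Omega$ joining $a,b$, parametrized with constant speed. By compactness of $|\gamma|$ and discreteness of $V'$, the set $\gamma^{-1}(V')$ is finite, say $\{t_1 < \cdots < t_k\}$. On each complementary open interval $(t_i, t_{i+1})$, both $\gamma$ and $h\circ\gamma$ avoid the singular sets, and the classical conformal length formula for planar conformal maps gives
\[ d_Y(h(\gamma(s)), h(\gamma(t))) \leq \int_s^t |Dh|(\gamma(u))|\gamma'(u)| \, du \]
for all $s,t \in (t_i,t_{i+1})$. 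Sending $s \to t_i^+$ and $t \to t_{i+1}^-$, continuity of $h$ and monotone convergence preserve this bound at the endpoints; summing over $i = 0,\ldots,k$ (with $t_0=0$ and $t_{k+1}=1$) and applying the triangle inequality then yields $d_Y(h(a),h(b)) \leq \int_\gamma |Dh| \, ds$. The analogous argument applied to $h^{-1}$ handles $|Dh^{-1}|$.

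For \ref{lemma:polyhedral_conformal_change}, I would cover $\Omega \setminus V'$ by chart domains and apply the classical area formula for planar conformal maps on each chart. Using that $\mathcal{H}^2$ coincides with planar Lebesgue measure on $\Omega$ and $Y \setminus V$ through the Euclidean charts, and that $V, V'$ are $\mathcal{H}^2$-null, this gives the global identity for Borel $E \subset \Omega$; the identity for $h^{-1}$ follows identically. Part \ref{corollary:polyhedral_conformal} would then follow by specializing \ref{lemma:polyhedral_conformal_change} to $E = h^{-1}(F)$ to obtain $\int_{h^{-1}(F)}|Dh|^2\, d\mathcal{H}^2 = \mathcal{H}^2(F)$ for all Borel $F \subset Y$, and applying Lemma \ref{lemma:weak_upper_gradient_path_integral}\ref{lemma:weak_upper_gradient_measure_ineq} with $K=1$ to both $h$ and $h^{-1}$ in turn.

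The main obstacle I anticipate is the cone-type behavior of $h$ at vertex preimages, where the classical derivative of $h$ may be unbounded or zero; the splitting-and-limiting device above is designed precisely to bypass this, using only the continuity of $h$ globally and its classical conformality on the complement of an $\mathcal{H}^2$-null set, so that the full upper gradient inequality and area identity hold without any further regularity input near $V'$.
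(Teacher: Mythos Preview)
Your approach mirrors the paper's closely: define $|Dh|$ as the modulus of the complex derivative in local charts away from the vertex preimages $V'$, extend by zero, verify the upper gradient inequality by decomposing paths along $\gamma^{-1}(V')$, and deduce \ref{lemma:polyhedral_conformal_change} and \ref{corollary:polyhedral_conformal} via the planar area formula and Lemma~\ref{lemma:weak_upper_gradient_path_integral}. The treatment of \ref{lemma:polyhedral_conformal_change} and \ref{corollary:polyhedral_conformal} is essentially identical to the paper's.

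There is, however, a genuine gap in your verification of \ref{lemma:polyhedral_conformal_upper}. The assertion that $\gamma^{-1}(V')$ is finite is false in general: a rectifiable curve, even one parametrized with constant speed, can pass through a fixed point infinitely often (for instance, start at the origin, go out to distance $2^{-n}$ and return, for each $n\in\N$; the total length is finite). Thus $[0,1]\setminus\gamma^{-1}(V')$ may have countably many components, and your ``sum over $i=0,\dots,k$ plus triangle inequality'' step breaks down, because the triangle inequality does not chain over a countable, order-dense collection of subintervals without further input. The paper handles exactly this issue by allowing \emph{infinitely} many complementary intervals $I_i$ and proving the stronger length identity $\ell(h\circ\gamma)=\sum_i \ell(h\circ\gamma|_{I_i})$. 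The missing observation is that $(h\circ\gamma)(\gamma^{-1}(V'))\subset V$ is finite and hence $\mathcal H^1$-null in $Y$, so the portion of $h\circ\gamma$ over $\gamma^{-1}(V')$ contributes nothing to the length (cf.\ Proposition~\ref{prop:cantor}\ref{prop:cantor:curve}). Once that length identity is established, the upper gradient inequality follows immediately from $d_Y(h(a),h(b))\leq\ell(h\circ\gamma)$.
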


\begin{proof}
We show the existence of the upper gradient $|Dh|$ of $h$ that satisfies the change of coordinates formula in \ref{lemma:polyhedral_conformal_change}. Then, from Lemma \ref{lemma:weak_upper_gradient_path_integral}, it follows that $\Mod \Gamma \leq  \Mod h(\Gamma)$ for all curve families $\Gamma$ in $\Omega$. The claims for $h^{-1}$ are proved similarly. 

We write $Y=\bigsqcup P_i/\sim$, where $P_i$  are polygonal domains in the plane, and denote by $\varphi_i$   the complex chart identifying $P_i\subset Y$   with itself as as subset of $\C$; see the discussion in the beginning of Section \ref{sec:complex}. Let $V$ denote the set of vertices of $X$ and note that $V$ is finite. 

On $h^{-1}(V)$ we define $|Dh|=0$. On $\Omega\setminus h^{-1}(V)$ we define $|Dh|$ as follows. Let $x\in \Omega \setminus h^{-1}(V)$ and consider a polygon $P_i$ with $h(x)\in P_i$. We define $|Dh|$ to be the absolute value of the derivative of $\varphi_i\circ h$ as a holomorphic map from a subset of $\Omega$ to the planar polygon $P_i$. (If $\Omega=\widehat\C$, using the coordinates of the stereographic projection gives
$|Dh|(z)= 2^{-1}(1+|z|^2)|(\varphi_i\circ h)'(z)|$, although we do not need this formula.) If $h(x)$ does not lie on any polygon $P_j$ for $j\neq i$ then $|Dh|(x)$ is clearly well-defined. Suppose that $h(x)$ lies in the interior of a common edge $J$ of $P_i$ and $P_j$. There exists a local chart $\varphi_J$ in a neighborhood $U\subset Y$ of $J$ such that, up to isometries of the plane, $\varphi_J$ agrees with $\varphi_i$ and $\varphi_j$ in $U\cap \Int(P_i)\subset Y$ and $U\cap \Int(P_j)\subset Y$, respectively. This shows that the absolute values of the derivatives of $\varphi_i\circ h$ and $\varphi_j\circ h$ agree on $h^{-1}(J)$, so $|Dh|(x)$ is also well-defined in this case.

With this definition of $|Dh|$, we claim that if $\gamma$ is a locally rectifiable path in $\Omega$ connecting points $a$ and $b$, then 
\begin{align*}
    d_Y(h(a),h(b))\leq \ell(h\circ \gamma)= \int_{\gamma} |Dh| \, ds, 
\end{align*}
so \ref{lemma:polyhedral_conformal_upper} is true. We only have to justify the equality. The statement holds for paths avoiding the finite set $h^{-1}(V)$ because $|Dh|$ is the absolute value of the derivative of appropriate conformal maps and the metric of $Y$ is locally isometric to the Euclidean metrics of the polygons away from the vertices. The general statement is proved by partitioning a path $\gamma\colon [0,1]\to \Omega$ into possibly infinitely many subpaths $\gamma_i\colon I_i\to \Omega$, $i\in \N$, where $I_i$, $i\in \N$, are the components of $[0,1]\setminus \gamma^{-1}(h^{-1}(V))$. Each path $\gamma_i$ satisfies the claimed equality. Since $V$ is a finite set (in fact, it suffices that $\mathcal H^1(V)=0$; see Proposition \ref{prop:cantor} \ref{prop:cantor:curve}), one can show that 
\begin{align*}
    \ell(h\circ \gamma)= \sum_{i\in \N} \ell(h\circ \gamma_i)
\end{align*}
This completes the proof of the claim. 

The change of coordinates formula in \ref{lemma:polyhedral_conformal_change} is true for Borel sets $E\subset \Omega \setminus h^{-1}(V)$, since $|Dh|^2$ is the Jacobian of appropriate conformal maps and the metric of $Y$ is locally isometric to the Euclidean metrics of the polygons away from the vertices. On the other hand,  the vertices have measure zero both in $\Omega$ and in $Y$.  Thus, the change of coordinates holds for all Borel sets $E\subset \Omega$.
\end{proof}

\bigskip

\section{Bi-Lipschitz embedding triangles into the plane} \label{sec:bl_embedding_triangles}

In this section, we prove \Cref{prop:BL_embedding}, stating that every metric triangle can be bi-Lipschitz embedded into the plane with a uniform bi-Lipschitz constant. Recall that a metric triangle is a metric space consisting of three closed arcs, called {edges}, each isometric to an interval that connect pairwise a set of three points, called {vertices}. More precisely, we can define a metric triangle as the quotient metric space induced by equipping $\mathbb{S}^1$ with a pseudometric such that $\mathbb{S}^1$ is the union of three non-overlapping closed arcs each isometric to an interval. {Here, two sets are \textit{non-overlapping} if their interiors are disjoint.} This definition allows the possibility that the edges intersect in interior points. We say that a metric triangle is \textit{simple} if it is homeomorphic to $\mathbb S^1$. A \textit{tripod} is a length metric space consisting of three closed arcs glued at a common endpoint but otherwise disjoint. Note that a tripod is also a metric triangle {with vertices the non-glued endpoints of the original closed arcs}.

For any triple of points $p,q,r$ in a metric space $(X,d)$, the \emph{Gromov product} $(p\cdot q)_r$ is defined by
\[(p \cdot q)_r = \frac{1}{2}(d(p,r) + d(q,r) - d(p,q)). \] 
To such a triple $p,q,r \in X$, we can associate a tripod $\bar{\Delta}$ with outer vertices $\bar{p}, \bar{q}, \bar{r}$ and central vertex $\bar{o}$, where $\ell([\bar{o},\bar{p}]) = (q\cdot r)_p$, $\ell([\bar{o},\bar{q}]) = (r\cdot p)_q$, and $\ell([\bar{o},\bar{r}]) = (p\cdot q)_r$. Observe that 
\begin{align}\label{gromov}
    d(p,q) = (q \cdot r)_p + (r \cdot p)_q = \ell([\bar{p},\bar{o}]) + \ell([\bar{o},\bar{q}]) = \ell([\bar{p},\bar{q}]),
\end{align}
and similarly for $d(p,r)$ and $d(r,q)$. For more background on the Gromov product, see \cite[Chapter III.H.1]{BH:99}. {We denote the metric on $\bar{\Delta}$ by $D$.}

If $p,q,r$ are the vertices of a metric triangle $\Delta$, then there is a natural projection $\Phi \colon \Delta \to \bar{\Delta}$ such that $\Phi(p) = \bar{p}$, $\Phi(q) = \bar{q}$, $\Phi(r) = \bar{r}$, and $\Phi$ is an isometry on each edge of $\Delta$. {For a point $x \in \Delta$, we write $\bar{x}$ to denote $\Phi(x)$.}

\begin{lemm}\label{lemma:lipschitz}
The natural projection $\Phi\colon \Delta\to \bar \Delta$ is $1$-Lipschitz. More specifically, we have
\begin{equation} \label{equ:tripod_projection}
    D(\bar{x},\bar{y}) \leq d(x,y)
\end{equation}
for all $x,y \in \Delta$, with equality whenever $x,y$ lie on the same edge of $\Delta$.
\end{lemm}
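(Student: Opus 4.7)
The plan is to split the argument into two cases based on whether $x$ and $y$ lie on a common edge of $\Delta$ or on distinct edges. The same-edge case gives equality by construction, and the distinct-edge case reduces to a Gromov-product inequality that follows from the triangle inequality.

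For the same-edge case, say $x, y \in [p, q]$: by construction $\Phi$ maps the edge $[p, q]$, which carries arc-length as its intrinsic metric since edges are assumed isometric to intervals, isometrically onto the concatenation $[\bar{p}, \bar{o}] \cup [\bar{o}, \bar{q}] \subset \bar{\Delta}$, of total length $(q \cdot r)_p + (r \cdot p)_q = d(p, q)$ by \eqref{gromov}. Since the tripod metric $D$ restricted to this concatenated path is also arc length, the equality $D(\bar{x}, \bar{y}) = d(x, y)$ is immediate.

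For the distinct-edge case, any two edges of $\Delta$ share a vertex, so after relabeling I may assume $x \in [p, q]$ and $y \in [p, r]$ share the vertex $p$. Writing $a = d(p, x)$ and $b = d(p, y)$, a short case analysis on whether each of $a$ and $b$ is at most or greater than $(q \cdot r)_p$ (the length of the $\bar{p}$-arm of $\bar{\Delta}$) yields the unified formula
\[
D(\bar{x}, \bar{y}) = a + b - 2\min\{a, b, (q \cdot r)_p\}.
\]
Dividing by two, the desired inequality $D(\bar{x}, \bar{y}) \leq d(x, y)$ becomes
\[
(x \cdot y)_p \leq \min\{a, b, (q \cdot r)_p\}.
\]
The bounds $(x \cdot y)_p \leq a$ and $(x \cdot y)_p \leq b$ are just the triangle inequality applied to the triples $p, x, y$ in two ways. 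For the remaining bound $(x \cdot y)_p \leq (q \cdot r)_p$, the key point is that $[p, q]$ and $[p, r]$ are isometric to intervals, so $a = d(p, q) - d(q, x)$ and $b = d(p, r) - d(r, y)$; substituting into the definitions of the two Gromov products, the inequality reduces to $d(q, r) \leq d(q, x) + d(x, y) + d(y, r)$, which is the triangle inequality for the chain $q, x, y, r$.

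The step I expect to require the most care is the case analysis producing the unified formula for $D(\bar{x}, \bar{y})$, since one must handle the four configurations determined by the positions of $\bar{x}$ and $\bar{y}$ on the arms of $\bar{\Delta}$ and verify that the single $\min$-formula covers all of them. Everything after that reduces to triangle-inequality bookkeeping, and the same argument (with the roles of the vertices permuted) covers all remaining pairs of edges.
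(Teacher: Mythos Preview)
Your proof is correct. The paper takes essentially the same route---a same-edge case giving equality by definition, then a distinct-edge case reduced to triangle inequalities---but organizes the second step differently: rather than your unified formula $D(\bar x,\bar y)=a+b-2\min\{a,b,(q\cdot r)_p\}$ followed by the Gromov-product bound $(x\cdot y)_p\leq\min\{a,b,(q\cdot r)_p\}$, the paper assumes without loss of generality that $d(y,p)\geq d(x,p)$ and then introduces auxiliary points $x',y'$ on a third edge with $\bar{x}'=\bar{x}$, $\bar{y}'=\bar{y}$, thereby reducing to the same-edge equality case. Both arguments unwind to the same triangle inequalities (in particular your final bound $d(q,r)\leq d(q,x)+d(x,y)+d(y,r)$ is exactly what the paper's second case uses), so the difference is purely in packaging; your explicit $\min$-formula handles all four configurations at once and avoids the auxiliary-point construction, which is arguably cleaner.
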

\begin{proof}
By definition, we have $D(\bar x, \bar y)=d(x,y)$ whenever $x,y$ lie on the same edge of $\Delta$. Without loss of generality, we assume that $x$ lies on the edge $[p,q]$ and $y$ lies on $[p,r]$ and that $d(y,p)\geq d(x,p)$. We consider two cases.

Suppose first that $d(x,p)\leq \ell([ \bar o,\bar p])$. Then there exists a point $x'\in [p,r]$ such that $d(x',p)=d(x,p)$ and $\bar x' =\bar x$. Using the fact that $[p,q]$ and $[p,r]$ are geodesics, we have
\begin{align*}
    D(\bar x,\bar y)&= D(\bar x',\bar y) =d(x',y)= d(y,p)-d(x',p)\\
    &=d(y,p)-d(x,p)\leq d(x,y).
\end{align*}

Next, suppose that $d(x,p)> \ell([ \bar o,\bar p])$. Then $d(x,q)< \ell([ \bar o,\bar q])$ by \eqref{gromov}. In this case, there exists a point $x' \in [q,r]$ such that $d(x',q)=d(x,q)$ and $\bar x'=\bar x$. Moreover, since $d(y,p)\geq d(x,p)>\ell([ \bar o,\bar p])$, we have $d(y,r)< \ell([ \bar o,\bar r])$. Hence, there exists a point $y'\in [q,r]$ such that $d(y',r)=d(y,r)$ and $\bar y'=\bar y$. We have
\begin{align*}
    D(\bar x,\bar y)&= D( \bar x', \bar y')=d(x',y') =d(q,r)-d(y',r)-d(x',q)\\
    &=d(q,r)-d(y,q)-d(x,r) \leq d(x,y).
\end{align*}
This completes the proof.
\end{proof}

We consider such a tripod $\bar{\Delta}$ as being embedded in $\mathbb{C}$, with the central vertex $\bar{o}$ at the origin and $p = (q\cdot r)_p$, $q = (r \cdot p)_q e^{2\pi i/3}$, and $r = (p \cdot q)_r e^{4\pi i/3}$. {Here and throughout this section, we use complex notation for points in $\mathbb{C}$.} We call such $\bar{\Delta}$ the \textit{canonical tripod} determined by $\Delta$. Our strategy for proving \Cref{prop:BL_embedding} is to project the metric triangle $\Delta$ onto the corresponding tripod $\bar{\Delta}$ and then add a transverse component whose magnitude is the distance from a given point to the union of the other two sides. A typical embedding is illustrated in \Cref{fig:tripod}.

\bigskip

\subsection{Tripodal metric on $\mathbb{C}$} \label{sec:tripodal_metric}
It is convenient to introduce a new metric on $\mathbb{C}$ that is compatible with embedded tripods. Let $u_1 = 1$, $u_2 = e^{2 \pi i/3}$, $u_3 = e^{4\pi i/3}$, $v_1 = e^{\pi i /3}$, $v_2 = -1$, and $v_3 = e^{5\pi i/3}$. For each $j \in \{1,2,3\}$, let $Z_j = \{tu_j: t \geq 0\}$, and let $Z = \bigcup_{j=1}^3 Z_j$. Then $\mathbb{C} \setminus Z$ consists of three components $U_1, U_2, U_3$, indexed so that $v_j \in U_j$ for each $j \in \{1,2,3\}$. Observe that each point $x \in \overline{U_j}$ can be written uniquely as $x = \bar{x} + t_xv_j$ for some $\bar{x} \in \partial U_j$ and $t_x \geq 0$. We employ this notation for a given point $x \in \mathbb{C}$.

We define the metric $D$ on $\mathbb{C}$ in the following way. First, $D|_{Z \times Z}$ is the intrinsic metric on $Z$. Next, for $x,y \in \overline{U_1}$, let $D(x,y) = |t_x - t_y| + D(\bar{x}, \bar{y})$. Define $D$ similarly on $\overline{U_2} \times \overline{U_2}$ and $\overline{U_3} \times \overline{U_3}$. Finally, for $x\in \br{U_j}$ and $y\in \br{U_i}$, where $i\neq j$, define 
\begin{equation} \label{equ:tripodal_metric}
  D(x,y)= t_x+t_y+D(\bar x, \bar y).  
\end{equation}
We note that $D(x,y)$ is the Euclidean length of a certain polygonal path joining $x$ and $y$. Observe that each set $\br{U_i}$ is convex with respect to $D$, and in particular that $D$ is a length metric on $\mathbb{C}$. 

We observe that $D$ is bi-Lipschitz equivalent to the Euclidean metric. In fact, a straightforward argument shows that
\begin{equation} \label{equ:bl_comparison}
    |x-y| \leq D(x,y) \leq 2|x-y| 
\end{equation}
for all $x,y \in \mathbb{C}$. The right inequality is sharp, as seen by taking $x=1$ and $y = e^{i\pi/3}$. 

\bigskip

\subsection{Proof of \Cref{prop:BL_embedding}}

We first restate \Cref{prop:BL_embedding} in a more precise form. For each $x \in \Delta$, let $I(x)$ denote an edge of $\Delta$ containing $x$ and $\widehat{I}(x)$ the union of the other two edges of $\Delta$. For a point $x \in \Delta$, denote by $\bar{x}$ the natural projection of $x$ in the canonical tripod $\bar{\Delta}$. Recall from the previous section the notation $u_j = e^{(2j-2)\pi i/3}$ and $v_j= e^{(2j-1)\pi i/3}$ for $j\in \{1,2,3\}$.
\begin{prop} \label{prop:BL_embedding2}
Let $\Delta$ be a metric triangle with vertices $p,q,r$ and edges $I_1 = [p,q]$, $I_2 = [q,r]$, $I_3 = [r,p]$. Let $\bar{\Delta}$ denote the canonical tripod determined by $\Delta$. Define the mapping $F\colon \Delta \to \mathbb{C}$ by 
\[F(x) = \bar{x} + \dist (x, \widehat{I}(x))v_j \hspace{.2in} \text{if } x \in I_j,\, j=1,2,3.\]
Then $F$ is $L$-bi-Lipschitz for $L=4$. 
\end{prop}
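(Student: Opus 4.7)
The plan is to work not with the Euclidean metric on $\mathbb{C}$ directly but with the tripodal metric $D$ introduced in \Cref{sec:tripodal_metric}. By the comparison \eqref{equ:bl_comparison}, it suffices to prove the two-sided bound
\[ \tfrac{1}{2}\, d(x,y) \leq D(F(x), F(y)) \leq 3\, d(x, y) \quad \text{for all } x, y \in \Delta,\]
since then $|F(x)-F(y)| \leq D(F(x),F(y)) \leq 3\, d(x,y)$ and $d(x,y) \leq 2 D(F(x),F(y)) \leq 4 |F(x)-F(y)|$, yielding the bi-Lipschitz constant $L = 4$. To align notation, for $x \in I_j$ I write $t_x = \dist(x, \widehat I(x))$, so that $F(x) = \bar x + t_x v_j$ lies in $\br{U_j}$ with projection to $\partial U_j$ equal to the tripod projection $\bar x = \Phi(x)$ and transverse component $t_x$; the formulas from \Cref{sec:tripodal_metric} then apply directly.

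For the upper bound I would split into two cases. If $x, y$ lie on a common edge $I_j$, the definition of $D$ on $\br{U_j}$ together with the equality case of Lemma 3.1 gives
\[D(F(x), F(y)) = |t_x - t_y| + D(\bar x, \bar y) = |t_x - t_y| + d(x, y) \leq 2\, d(x, y),\]
where the last step uses the $1$-Lipschitz continuity of $x \mapsto \dist(x, \widehat I(x))$. If $x \in I_j$ and $y \in I_k$ with $j \neq k$, formula \eqref{equ:tripodal_metric} yields $D(F(x), F(y)) = t_x + t_y + D(\bar x, \bar y)$; here $y \in I_k \subset \widehat I(x)$ gives $t_x \leq d(x,y)$, symmetrically $t_y \leq d(x,y)$, and $D(\bar x, \bar y) \leq d(x,y)$ by Lemma 3.1, so the total is at most $3\, d(x,y)$.

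The lower bound $d(x,y) \leq 2 D(F(x),F(y))$ is immediate in the same-edge case from the equality displayed above. The different-edge case is where the main work lies. Let $I_i$ denote the third edge, and choose $z_x \in \widehat I(x) = I_k \cup I_i$ and $z_y \in \widehat I(y) = I_j \cup I_i$ realizing $t_x$ and $t_y$ respectively. The key observation is that at least one of the following alternatives must hold: (a) some such $z_x$ lies on $I_k$; (b) some such $z_y$ lies on $I_j$; or (c) all choices of $z_x$ lie in $I_i$ and all choices of $z_y$ lie in $I_i$. In case (a), $z_x$ and $y$ lie on the common edge $I_k$, so by Lemma 3.1 and the triangle inequality in $D$,
\[d(z_x, y) = D(\bar z_x, \bar y) \leq D(\bar z_x, \bar x) + D(\bar x, \bar y) \leq t_x + D(\bar x, \bar y),\]
hence $d(x,y) \leq t_x + d(z_x, y) \leq 2 t_x + D(\bar x, \bar y) \leq 2 D(F(x), F(y))$. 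Case (b) is symmetric. In case (c), $z_x$ and $z_y$ both lie on $I_i$, so Lemma 3.1 gives $d(z_x, z_y) = D(\bar z_x, \bar z_y)$, and two applications of the triangle inequality yield $D(\bar z_x, \bar z_y) \leq t_x + D(\bar x, \bar y) + t_y$; thus $d(x,y) \leq t_x + d(z_x, z_y) + t_y \leq 2(t_x + t_y) + D(\bar x, \bar y) \leq 2 D(F(x), F(y))$. The main obstacle is recognizing that the alternatives (a), (b), (c) are exhaustive, so that Lemma 3.1 can in every instance convert the critical $d$-distance into a $D$-distance.
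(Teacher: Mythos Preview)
Your proof is correct and follows essentially the same approach as the paper: you reduce to the tripodal metric $D$ via \eqref{equ:bl_comparison}, establish the bounds $\tfrac{1}{2}d\le D(F(\cdot),F(\cdot))\le 3d$, and in the different-edge lower bound split into the same three cases according to which edge the nearest points $z_x,z_y$ fall on. The only cosmetic difference is that the paper fixes specific minimizers $z,w$ and phrases the cases as ``$y,z$ on the same edge'', ``$x,w$ on the same edge'', ``$z,w$ on the same edge'', whereas you phrase them as (a), (b), (c); the exhaustiveness you flag as the ``main obstacle'' is in fact immediate, since the negations of (a) and (b) together give (c).
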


\begin{proof}

Recall that $\bar{\Delta}$ is the tripod $[\bar{o},\bar{p}] \cup [\bar{o},\bar{q}] \cup [\bar{o},\bar{r}]$, where $\bar{p} = (q\cdot r)_p u_1$, $\bar{q} = (r \cdot p)_qu_2$, $\bar{r} = (p \cdot q)_ru_3$, and $\bar{o} = 0$. We use $D$ to denote the tripodal metric on $\mathbb{C}$ defined in \Cref{sec:tripodal_metric}, which agrees with the length metric on $\bar{\Delta}$ as a tripod.

Observe that $F$ is well-defined, and in particular that $F(x) = \bar{x}$ in the case that $x \in I_j \cap I_k$ for $j \neq k$. Let $x,y \in \Delta$. By symmetry, it suffices to consider the case that $x \in I_1$.

\begin{figure}
    \centering
    \begin{tikzpicture}[scale=1.4]
        \draw[->] (0,0) to (2,0);
        \draw[->] (0,0) to (-1,1.73);
        \draw[->] (0,0) to (-1,-1.73);
        \draw[blue, line width = 2pt] (0,0) to (1.8,0);
        \draw[blue, line width = 2pt] (0,0) to (-.8,1.39);
        \draw[blue, line width = 2pt] (0,0) to (-.6,-1.039);
        \draw[red,line width = 2pt] (1.8,0) .. controls (1.5,.03) and (1.3,.06) .. (1.,.2) .. controls (.7,.3) and (.4,.85) .. (.1,1.05) .. controls (-.2,1.3) and (-.5,1.35) .. (-.8,1.39) .. controls (-.95,1.1) and (-.95,.9) .. (-.85,.6) .. controls (-.7,.3) and (-.45,0) .. (-.45,-.3) .. controls (-.45,-.6) and (-.55,-.8) .. (-.6,-1.039) .. controls (-.4,-.8) and (-.2,-.55) .. (.1,-.4) .. controls (.3,-.3) and (.6,-.2) .. (.8,-.15) .. controls (1.1,-.1) and (1.3,-.05) .. (1.8,0);
        \filldraw[red] (-.8,1.39) circle (.04);
        \filldraw[red] (-.6,-1.039) circle (.04);
        \filldraw[red] (1.8,0) circle (.04);
        \node[red] at (1.8,-.2) {$\bar{p}$};
        \node[red] at (-1.0,1.4) {$\bar{q}$};
        \node[red] at (-.75,-1) {$\bar{r}$};
        \node[blue] at (.2,.2) {$\bar{\Delta}$};
        \node[red] at (.8,.8) {$F(\Delta)$};
    \end{tikzpicture}
    \caption{}
    \label{fig:tripod}
\end{figure}

If $y \in I_1$ as well, then 
$$D(F(x),F(y))= \abs{\dist(x,\widehat{I}(x)) - \dist(y, \widehat{I}(y))}+D(\br x,\br y). $$
Thus, we have $D(F(x),F(y)) \geq D(\bar{x},\bar{y}) = d(x,y)$ by \eqref{equ:tripod_projection}. Moreover, since \[\abs{\dist(x,\widehat{I}(x)) - \dist(y, \widehat{I}(y))} \leq d(x,y),\] we have $D(F(x),F(y)) \leq 2d(x,y)$. Summarizing, in this case we have
$$d(x,y)\leq D(F(x),F(y))\leq 2d(x,y).$$

Next, we suppose that $y \notin I_1$. The Lipschitz inequality follows immediately, since by \eqref{equ:tripodal_metric} and \eqref{equ:tripod_projection} we have
\[D(F(x),F(y)) = \dist(x, \widehat{I}(x)) + \dist(y, \widehat{I}(y))+D(\bar{x}, \bar{y})  \leq 3d(x,y).\]

For the co-Lipschitz inequality, let $z \in \widehat{I}(x)$ be such that $d(x,z) = \dist(x,\widehat{I}(x))$, and let $w \in \widehat{I}(y)$ be such that $d(y,w) =\dist(y,\widehat{I}(y))$. We split into cases.

\begin{case} \label{case:bl_1}
Suppose the points $y,z$ lie on the same edge. Then, $d(y,z) = D(\bar{y},\bar{z})$ by \eqref{equ:tripod_projection}. Moreover, also applying \eqref{equ:tripod_projection}, we have
\begin{align*}
    d(x,y)&\leq d(x,z)+d(y,z)=d(x,z)+D(\bar y, \bar z)\leq d(x,z)+D(\bar x,\bar y)+D(\bar x, \bar z)\\
    &\leq 2d(x,z)+ D(\bar x,\bar y).
\end{align*}
Therefore, \eqref{equ:tripodal_metric} gives
$$D(F(x),F(y))= D(\bar x,\bar y) +d(x,z)+d(y,w)\geq D(\bar x,\bar y) +d(x,z) \geq \frac{1}{2}d(x,y).$$
\end{case}

\begin{case} \label{case:bl_2}
Suppose the points $x,w$ lie on the same edge. This follows from \Cref{case:bl_1} by reversing the roles of $x$ and $y$.
\end{case}

\begin{case} \label{case:bl_3}
Suppose the points $z,w$ lie on the same edge. Then, by \eqref{equ:tripod_projection}, we have
\begin{align*}
    d(z,w) & = D(\bar{z},\bar{w}) \leq D(\bar{z},\bar{x}) + D(\bar{x},\bar{y}) + D(\bar{y},\bar{w}) \\
    & \leq d(x,z) + D(\bar{x},\bar{y}) + d(y,w) =D(F(x),F(y)).
\end{align*}
Therefore,
\begin{align*}
    D(F(x),F(y))&= \frac{1}{2}( D(F(x),F(y))+ D(\bar{x},\bar{y})+d(x,z)+d(y,w))\\
    &\geq \frac{1}{2}(d(z,w)+d(x,z)+d(y,w))
    \geq \frac{1}{2}d(x,y).
\end{align*}
\end{case}

We conclude that
\[\frac{1}{2} d(x,y) \leq D(F(x),F(y)) \leq 3d(x,y) \]
for all $x,y \in \Delta$. Combining this with \eqref{equ:bl_comparison}, we have
\[\frac{1}{4} d(x,y) \leq |F(x) - F(y)| \leq 3d(x,y)\]
for all $x,y \in \Delta$.
\end{proof}

\begin{rem}\label{remark:polygon}
    By the definition of $F$, it is clear that every line parallel to the vector $v_j$ intersects $F(I_j)$ in at most one point, for each $j\in \{1,2,3\}$. Suppose now that $\Delta$ is a simple metric triangle. If we partition $F(\Delta)$ into arcs $[x_{i-1},x_i]$, $i\in \{1,\dots,n\}$, where $x_0=x_n$, such that the collection $\{x_i\}_{i=1}^n$ contains the vertices of $F(\Delta)$, then the polygonal curve formed by joining $x_{i-1}$ with $x_i$ for each $i$ is a simple closed curve.
\end{rem}

\bigskip

\section{Fillings of simple metric triangles} 
\label{sec:filling}

A \textit{polygonal metric disk} or \textit{polygonal disk} is a metric space homeomorphic to a closed disk whose boundary can be represented as the union of finitely many non-overlapping geodesics, each of which is called an \textit{edge}. The endpoints of the edges are called \textit{vertices}. If a polygonal disk has three edges, we call it a \textit{triangular disk}. Observe that the boundary of a triangular disk is a {simple} metric triangle. A polygonal disk is \textit{planar} if it is a subset of $\mathbb{C}$, {equipped with the length metric induced by the Euclidean metric, and its} boundary consists of finitely many non-overlapping line segments. Thus the boundary of a planar polygonal disk is a polygon in the ordinary sense of the word.
In this section, we construct polyhedral fillings of {simple} metric triangles based on the bi-Lipschitz embedding of the previous section. We first give a preliminary lemma. 

\begin{lemm}\label{lemma:polygon_triang}
For every {planar polygonal disk} $P\subset \mathbb{C}$ and each $\varepsilon>0$ there exists a decomposition $\{P_k\}_{k\in K}$ of $P$ into non-overlapping polygonal disks satisfying the following.
\begin{enumerate}[label=\normalfont(\roman*)]
    \item  $\ell(\partial P_k)<\varepsilon$ for each $k\in K$. \label{item:decomposition_i}
    \item Any points $x,y\in \bigcup_{k\in K} \partial P_k$ can be joined by a path in $\bigcup_{k\in K} \partial P_k$ with length {at most} $\ell(\partial P)$. \label{item:decomposition_ii}
    \item $\displaystyle{\sum_{k\in K} \ell(\partial P_k)^2} \leq 17\mathcal H^2(P)$. \label{item:decomposition_iii}
\end{enumerate}
\end{lemm}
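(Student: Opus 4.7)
The decomposition is produced by overlaying a fine square grid on $\mathbb{C}$. Given $\varepsilon>0$, fix $\delta>0$ sufficiently small (in terms of $\varepsilon$ and the polygon $P$), and consider the grid $\delta\mathbb{Z}^2$ translated to a generic position so that no vertex of $P$ lies on a grid line and no edge of $\partial P$ is contained in one. The pieces $\{P_k\}_{k\in K}$ of the decomposition are defined to be the connected components of $P\cap S$ as $S$ ranges over the closed grid squares meeting $P$. Since $P$ is a simply connected polygonal disk and each square $S$ is convex, every such component is itself a polygonal disk.

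For property~(i), interior squares have perimeter $4\delta$, while each boundary piece has perimeter at most $4\delta+\delta\sqrt{2}$ provided $\delta$ is smaller than the shortest edge of $\partial P$ (so that each grid square meets at most one edge of $\partial P$). For property~(iii), the interior pieces satisfy $\ell(\partial P_k)^2=16\delta^2=16\mathcal{H}^2(P_k)$, contributing at most $16\mathcal{H}^2(P)$ in total. For boundary pieces, the number is $O(\ell(\partial P)/\delta)$ and each squared perimeter is $O(\delta^2)$, so the total boundary contribution is $O(\delta\,\ell(\partial P))$ and can be made less than $\mathcal{H}^2(P)$ by further shrinking $\delta$. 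This yields the desired bound $17\mathcal{H}^2(P)$.

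For property~(ii), observe that $\bigcup_k\partial P_k$ is the union of $\partial P$ with the internal straight segments formed by the grid lines inside $P$. Each connected component of $(\text{grid line})\cap P$ is a straight segment with both endpoints on $\partial P$ and Euclidean length at most $\diam(P)\leq \ell(\partial P)/2$; the last inequality is obtained by comparing $P$ with its convex hull, since the convex hull has perimeter at most $\ell(\partial P)$ and any convex planar region has Euclidean diameter at most half its perimeter. Thus every point on an internal segment lies within distance $\ell(\partial P)/4$ of $\partial P$ along that segment. Given any two points $x,y\in\bigcup_k\partial P_k$, one joins $x$ to $\partial P$ (length $\leq\ell(\partial P)/4$), traverses the shorter arc of $\partial P$ (length $\leq\ell(\partial P)/2$), and then reaches $y$ (length $\leq\ell(\partial P)/4$), giving a total path of length at most $\ell(\partial P)$.

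The main obstacle is the careful bookkeeping required to make property~(iii) work with the sharp constant $17$: the interior squares account for exactly $16\mathcal{H}^2$ of the interior region, leaving only a single unit of slack in which to absorb the boundary pieces. This is delicate because a boundary piece may in principle contain a long portion of $\partial P$, so the hypothesis that $\partial P$ is a finite union of line segments is essential, ensuring that for small $\delta$ each grid square meets at most one edge of $\partial P$ and contributes only $O(\delta^2)$ to the sum. Once~(iii) is arranged, property~(ii) follows cleanly from the convex-hull diameter bound described above.
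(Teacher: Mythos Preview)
Your approach is essentially the same as the paper's: overlay a fine square grid, split into interior squares and boundary pieces, and bound the three properties accordingly. One small correction: the claim that ``each grid square meets at most one edge of $\partial P$'' whenever $\delta$ is below the shortest edge length is false---a square containing a vertex of $P$ meets the two edges incident to that vertex, regardless of how small $\delta$ is. The paper instead arranges that each square meets \emph{at most two} edges of $\partial P$ (by taking $\delta$ small relative to the minimum distance between non-adjacent edges), which still gives perimeter $O(\delta)$ for each boundary piece and hence at most two components of $P\cap S$ per square. With this fix your estimates for (i) and (iii) go through unchanged, and your argument for (ii) via the convex-hull diameter bound matches the paper's.
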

\begin{proof}
Let $\varepsilon>0$. Choose $\varepsilon' \in (0,\varepsilon)$ sufficiently small that any square of side length $\varepsilon'$ intersects at most two edges of $\partial P$. The square grid $\varepsilon' \Z^2$ divides $\C$ into non-overlapping square regions $Q$ of the form $[\varepsilon'j_1,\varepsilon'(j_1+1)]\times[\varepsilon'j_2,\varepsilon'(j_2+1)]$ for some $j_1,j_2 \in \mathbb{Z}$. Note that $Q \cap P$ can be written as the union of at most two polygonal regions for each square region $Q$. Enumerate by $P_k$, $k\in K_1$, the square regions $Q$ that are contained in $P$ and by $P_k$, $k\in K_2$, the polygonal regions arising as the intersection of $P$ with those $Q$ whose interior intersects $\partial P$. We set {$K=K_1\sqcup K_2$}. {By taking $\varepsilon'$ sufficiently small, we also ensure that} $\ell(\partial P_k) < {\varepsilon}$ for each $k\in K$.

Each $x \in  \bigcup_{k\in K} \partial P_k $ belongs to a horizontal line segment in $\bigcup_{k\in K} \partial P_k$ or a vertical line segment in $\bigcup_{k\in K} \partial P_k$. In the first case, let $L_x$ denote the maximal horizontal line segment contained in $P$ passing through $x$. Otherwise, let $L_x$ denote the maximal vertical line segment contained in $P$ passing through $x$.  Let $p_x$ denote a point in $L_x \cap \partial P$ nearest to $x$; then the line segment $A_x \subset L_x$ from $x$ to $p_x$ has length at most $\ell(\partial P)/4$. Given two points $x,y \in \bigcup_{k\in K} \partial P_k$, we can join $p_x$ to $p_y$ by a subarc $C_{xy} \subset \partial P$ of length at most $\ell(\partial P)/2$. Joining $A_x$, $C_{xy}$, and $A_y$ gives a path in $\bigcup_{k\in K} \partial P_k$ with length at most $\ell(\partial P)$. 

For all $k \in K_1$, $P_k$ is a square region and we have $\ell(\partial P_k)^2=16\mathcal H^2(P_k)$. Thus,
\begin{align*}
    \sum_{k\in K_1} \ell(\partial P_k)^2\leq 16 \mathcal H^2(P).
\end{align*}
{For each $k \in K_2$, let $Q_k$ denote the square region above used to define $P_k$, and observe that the correspondence $P_k\mapsto Q_k$ is at most two-to-one. Note that $Q_k$ has diameter less than $2\varepsilon'$ and that $\ell(\partial P_k) \leq  {2} \ell(\partial Q_k)$.} Thus, 
\begin{align*}
    \sum_{k\in K_2} \ell(\partial P_k)^2 & \leq 4 \sum_{k \in K_2} \ell(\partial Q_k)^2 = {64} \sum_{k\in K_2} \mathcal H^2(Q_k) \\ & \leq {64\cdot 2\cdot \mathcal H^2 (N_{2\varepsilon'}(\partial P))} \leq {64\cdot 2\cdot  2(2\varepsilon')} \ell(\partial P),
\end{align*}
where the last inequality follows from \cite[Theorem 10--41, p.~285]{Apo:57}. Therefore
\begin{align*}
    \sum_{k\in K} \ell(\partial P_k)^2 \leq 17\mathcal H^2(P)
\end{align*}
upon choosing $\varepsilon'$ to be sufficiently small.
\end{proof}

We continue with the main result of this section, giving a polyhedral filling of an arbitrary {simple} metric triangle with controlled Hausdorff $2$-measure.

\begin{thm} \label{thm:triangular_surface}
Let $(T,d)$ be a triangular metric disk with edges $\alpha_j$, $j\in \{1,2,3\}$. There exists a polyhedral surface $(S,d_S)$ {that is a triangular metric disk with edges} $\beta_j$, $j\in \{1,2,3\}$, and a homeomorphism  {$\varphi\colon S \to T$} such that the following hold for an absolute constant $L>0$ independent of $T$.
\begin{enumerate}[label=\normalfont(\arabic*)]
    \item $\diam_{d_S}(S)\leq L \diam_d(T)$. \label{item:filling_1}
    \item $\mathcal H_{d_S}^2(S)\leq L \mathcal  H_d^2(T)$. \label{item:filling_2}
    \item $\varphi|_{|\beta_j|}$ maps $|\beta_j|$ isometrically onto $|\alpha_j|$ for each $j \in \{1,2,3\}$. In particular, $\varphi|_{\partial S}$ is length-preserving. \label{item:triangular_disk_3} 
    \label{item:filling_3}
    \item For all $x,y \in \partial S$, $d_S(x,y) \geq d(\varphi(x),\varphi(y))$. \label{item:filling_4}
\end{enumerate}
\end{thm}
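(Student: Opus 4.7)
The plan is to embed the simple metric triangle $\partial T$ bi-Lipschitzly into the plane via \Cref{prop:BL_embedding2} (after a rescaling), to fill the enclosed Jordan domain by a polyhedral approximation supplied by \Cref{lemma:polygon_triang}, and to attach a planar triangular ``flap'' along each subarc of the polygonal boundary so that the combined boundary becomes isometric to $\partial T$. The area bound will come from the Besicovitch inequality (\Cref{thm:besicovitch}), and condition (4) will be arranged by the scaling of the embedding.

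More concretely, I first apply \Cref{prop:BL_embedding2} to $\partial T$ to obtain a bi-Lipschitz embedding $F\colon \partial T \to \mathbb{C}$, and I rescale so that $|F(x) - F(y)| \geq d(x, y)$ for all $x, y \in \partial T$; this lower bound will be crucial for (4). Choose a sufficiently fine partition $x_1, \dots, x_N$ of $\partial T$ including its three vertices, set $p_k = F(x_k)$, and let $P$ be the polygon with vertices $p_1, \dots, p_N$, which is simple by \Cref{remark:polygon}. Let $\Omega_P$ be the closed Jordan domain enclosed by $P$, equipped with the polyhedral decomposition from \Cref{lemma:polygon_triang} into sub-faces of small diameter. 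For each subarc $\gamma_k$ of arc length $b_k$, the corresponding polygonal edge $[p_k, p_{k+1}]$ has length $a_k \geq b_k$ by the scaling, so I form a planar isosceles triangular flap $\mathcal{F}_k$ with base of length $b_k$ and two slant sides of length $a_k/2$, and I glue the two slants isometrically onto the two halves of $[p_k, p_{k+1}]$. The resulting polyhedral surface $S$ has boundary equal to the union of the flap bases, and I parameterize $\varphi$ so that each flap base is mapped isometrically onto the corresponding subarc $\gamma_k$; I then extend $\varphi$ to a homeomorphism of the interiors.

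For verification, condition (3) is by construction. For (2), the Besicovitch inequality applied to the rescaled $F$ gives $\mathcal{H}^2(\Omega_P) \leq C \mathcal{H}^2(T)$ up to an error that vanishes with the fineness of the partition, while the total flap area is bounded by $\tfrac{1}{4}(\max_k a_k)\ell(\partial T)$ and can be made negligible. Condition (1) follows because the perimeter $\ell(\partial P)$ is comparable to $\ell(\partial T) \lesssim \diam(T)$, and \Cref{lemma:polygon_triang}(ii) bounds the intrinsic diameter of $\Omega_P$ in terms of this perimeter, with small contributions from the flaps. The main obstacle is (4). The key geometric fact is that the base of each flap is a side of a planar triangle, and hence a geodesic in the flap; this forces $d_S(x, y) = d(\varphi(x), \varphi(y))$ whenever $x, y$ lie on a common flap base (since $\gamma_k$ is a geodesic subarc of $\partial T$). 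For boundary points on different flaps, the scaling $|F(x) - F(y)| \geq d(x, y)$ makes Euclidean distances in $\Omega_P$ between polygonal vertices dominate the corresponding distances in $T$, and a case analysis on any path in $S$, decomposed into its pieces in $\Omega_P$ and in individual flaps, should yield $d_S(x, y) \geq d(\varphi(x), \varphi(y))$ via the triangle inequality in $T$.
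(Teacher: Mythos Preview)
Your overall plan---embed $\partial T$ via \Cref{prop:BL_embedding2}, fill the enclosed region, and attach something along the boundary to recover $\partial T$ isometrically---matches the paper's, but the execution diverges at exactly the point where (4) must be verified, and your sketch of (4) has a genuine gap.

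The paper does \emph{not} keep the filled region flat. It scales every edge of the decomposition of $P$ coming from \Cref{lemma:polygon_triang} to \emph{four times} its Euclidean length, and then glues into each face $P_k$ (and into each bigon formed by a polygon edge and the corresponding arc $[x_{m-1},x_m]$ of $F(\Delta)$) a cube with its bottom face removed. The single purpose of the cubes is that any path inside one with endpoints on its boundary is no shorter than the boundary path joining the same endpoints; hence $d_S=d_1$ on the $1$-skeleton $S_1$. Once that is known, the factor-$4$ scaling combined with $d(x,y)\le 4|F(x)-F(y)|$ gives (4) in one line: a path in $S_1\setminus T_1$ between $x_m,x_l\in T_1$ has $d_1$-length equal to $4$ times its Euclidean length, hence $\ge 4|x_m-x_l|\ge d(x_m,x_l)$.

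Your flat $\Omega_P$ together with flat triangular flaps lacks this no-shortcut mechanism, and your argument does not supply a substitute. For $x,y$ on the same flap base you invoke that the base is a geodesic \emph{in the flap}; but (4) requires it to be a geodesic \emph{in $S$}, and a competing path can climb a slant, cross a piece of $\Omega_P$, cut through other flaps (each flap, being strictly narrower than the polygon edge it sits on, genuinely \emph{shortens} distances between its two slants), and descend to $y$. For $x,y$ on different flaps, your inequality $|F(x)-F(y)|\ge d(x,y)$ applies only to the polygon \emph{vertices} $p_k=F(x_k)$; an arbitrary path hits $\partial\Omega_P$ at interior points of the segments $[p_k,p_{k+1}]$, and telescoping via the triangle inequality in $T$ would require the piecewise-linear retraction $\psi\colon\partial\Omega_P\to\partial T$ (sending $[p_k,p_{k+1}]$ linearly onto $\gamma_k$) to be $1$-Lipschitz for the intrinsic metric of $\Omega_P$. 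That is not automatic: $\partial\Omega_P$ is the secant polygon, not $\tilde F(\partial T)$ itself, and at a convex vertex $p_k$ with small interior angle the Euclidean distance $|u-v|$ between nearby points on the two incident edges can be strictly smaller than $d(\psi(u),\psi(v))$. The paper's cube-and-scaling device is precisely what eliminates this difficulty; your ``case analysis'' does not.
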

\begin{proof}
Let $\Delta = \partial T$. Thus $\Delta$ is a metric triangle equipped with the metric $d$. Let $F\colon \Delta \to \mathbb{C}$ be the $4$-bi-Lipschitz embedding in \Cref{prop:BL_embedding2}, and let ${\Omega}$ be the closed region in $\mathbb{C}$ bounded by $F(\Delta)$. Moreover, let $\beta_j = F \circ \alpha_j$ for each $j \in \{1,2,3\}$. {We use the embedded curve $F(\Delta)$ to construct a polyhedral surface $S$ with the desired properties.} We note that as soon as $S$ is homeomorphic to a closed disk, \ref{item:filling_3} and \ref{item:filling_4} imply immediately that $S$ is a triangular metric disk.

Equip $F(\Delta)$ with the pushforward metric of $d$ under $F$, which we also denote by $d$. Given two points $x,y \in F(\Delta)$, let $[x,y]$ denote the positively oriented subarc of $F(\Delta)$ from $x$ to $y$, according to the counterclockwise orientation on the curve $F(\Delta)\subset \C$. For each $\varepsilon>0$ there exists a partition of $F(\Delta)$ into arcs $[x_{m-1},x_m]$, $m\in \{1,\dots,n\}$, where $x_0=x_n$, and $d(x_{m-1},x_m)<\varepsilon$ for each $m$. We also require that the images of the vertices of $T$ are contained in the collection $\{x_m\}_{m=1}^n$. This guarantees that 
\begin{align*}
    \sum_{m=1}^n d(x_{m-1},x_m)  =\sum_{m=1}^n \ell_d([x_{m-1},x_m])= \ell_d( F(\Delta)) .
\end{align*}
Consider the Euclidean polygon formed by joining $x_{m-1}$ with $x_m$ for all $m\in \{1,\dots,n\}$. Our definition of the embedding $F$ ensures that the polygon does not have self-intersections; see Remark \ref{remark:polygon}. Denote the polygonal region bounded by that polygon by $P$. {By taking $\varepsilon$ to be sufficiently small,  we have that the region $P$ is arbitrarily close} to the region $\Omega$ bounded by $F(\Delta)$. In particular, we choose {$\varepsilon$} so that
\begin{align*}
    \mathcal H_{|\cdot|}^2(P) \leq 2\mathcal H_{|\cdot|}^2( \Omega).
\end{align*}
Since $F$ is $4$-bi-Lipschitz, we have 
\begin{align}\label{ineq:bilip_boundary}
    \ell_{|\cdot|}(\partial P) =\sum_{m=1}^n |x_{m}-x_{m-1}| \leq 4  \sum_{m=1}^n d(x_{m-1},x_m) = 4\ell_d( F(\Delta)).
\end{align}
We consider a {polygonal decomposition} $\{P_k\}_{k\in K}$ of the region $P$ satisfying the conclusions of \Cref{lemma:polygon_triang} with the given $\varepsilon$.

We declare the length of each edge of the polygonal decomposition $\{P_k\}_{k\in K}$ to be $4$ times its Euclidean length. Thus the $1$-skeleton of the decomposition is a $1$-dimensional polyhedral space {with the resulting length metric}. We add to this polyhedral space the arcs $[x_{m-1},x_m] \subset F(\Delta)$, each with length $\ell_d( [x_{m-1},x_m])$.  Note that we do not consider $[x_{m-1},x_m]$ as a subset of the plane, which could intersect the interior of some triangles $P_k$, but as an abstract segment. We denote the resulting $1$-dimensional polyhedral space by $S_1$ and its length metric by $d_1$. Denote by $T_1$ the copy of $F(\Delta)$ in $S_1$.

Let $\gamma$ be a path in $S_1\setminus T_1$ joining two points $x_m,x_l \in T_1$ for some $m,l\in \{1,\dots,n\}$. Then $$d(x_m,x_l) \leq 4|x_m-x_l| \leq 4 \ell_{|\cdot |}(\gamma)= \ell_{d_1}(\gamma).$$
From this, it follows that $d(x,y)\leq d_1(x,y)$ for all points $x,y\in T_1$. If $x,y\in |\beta_j|$ for some $j\in \{1,2,3\}$, then there exists a subpath $\gamma$ of $\beta_j$ connecting $x$ and $y$ with $d(x,y)=\ell_d(\gamma)=\ell_{d_1}(\gamma)$. Thus $d(x,y)\geq d_1(x,y)$, and it follows that $d=d_1$ on $|\beta_j|$. Moreover, by property \ref{item:decomposition_ii} of \Cref{lemma:polygon_triang}, \eqref{ineq:bilip_boundary}, and the relation $\ell_d(F(\Delta)) = \ell_d(\Delta) \leq 3\diam(T)$,  we have 
\begin{align}\label{ineq:diameter}
\diam_{d_1}(S_1) \leq 4 \ell_{|\cdot|}(\partial P) + 2\varepsilon \leq 16  \ell_d( F(\Delta)) + 2\varepsilon \leq 48\diam_d(T)+2\varepsilon .    
\end{align}

We wish to fill in the $1$-skeleton $S_1$ with faces so that we obtain a polyhedral surface $S$ with the desired properties. To each Jordan curve $\partial P_k \subset S_1$ we glue a cube $S(P_k)$ with bottom face removed isometrically {along its boundary, where the boundary of $S(P_k)$ necessarily} has length equal to $\ell_{d_1}(\partial P_k)$. Thus 
\begin{equation} \label{equ:cube_area_1}
    \mathcal H^2(S(P_k)) = (5/16)\ell_{d_1}(\partial P_k)^2 = 5 \ell_{|\cdot|}(\partial P_k)^2.
\end{equation}
{Next, consider a Jordan curve formed by {an arc} $[x_{m-1},x_m]$ and a line segment $I \subset \partial P$. We observe first that $\ell_{d_1}(I) {=} 4 \ell_{|\cdot|}(I) \leq 16 d(x_{m-1},x_m)$, since $F$ is $4$-bi-Lipschitz. Glue a cube $S(x_m)$ with bottom face removed isometrically into this Jordan curve along its boundary. Then $\partial S(x_m)$ has length at most $17 d(x_{m-1},x_m)$ and thus} 
\begin{equation} \label{equ:cube_area_2}
    \mathcal{H}^2(S(x_m)) \leq L_0  d(x_{m-1},x_m)^2
\end{equation}  
{for $L_0 = 5\cdot (17/4)^2$.} Denote by $S$ the resulting polyhedral space and by $d_S$ the resulting length metric. By construction, $S$ is a closed topological disk with boundary $T_1$. We define $\varphi\colon S \to T$ to be {an arbitrary homeomorphism such that $\varphi|_{\partial S} = F^{-1}$.} 

It is immediate that $d_S(x,y)=d_1(x,y)$ for all $x,y\in S_1\subset S$. Indeed, any path inside an attached cube with endpoints on the boundary has longer length than the path on the boundary of the cube that has the same endpoints. This is the reason for attaching cubes to $S_1$. Since $d\leq d_1=d_S$ on $T_1$, we immediately obtain \ref{item:filling_4}. Moreover, $d=d_1=d_S$ on $|\beta_j|$ for each $j\in \{1,2,3\}$, so we also obtain \ref{item:filling_3}. For \ref{item:filling_2}, {we use \eqref{equ:cube_area_1} and \eqref{equ:cube_area_2} to get}
\begin{align*}
    \mathcal H^2(S) &=\sum_{k\in K} \mathcal H^2(S(P_k)) +\sum_{m=1}^n \mathcal H^2(S(x_m)) \\ & \leq \sum_{k\in K} 5  \ell_{|\cdot|}(\partial P_k)^2 +  \sum_{m=1}^n L_0  d(x_{m-1},x_m)^2.
\end{align*}
Applying property \ref{item:decomposition_iii} from Lemma \ref{lemma:polygon_triang} and the relationship $d(x_{m-1},x_m)<\varepsilon$, we obtain
\begin{align*}
   \mathcal H^2(S) &\leq  85 \mathcal H^2(P)+ L_0  \varepsilon \sum_{m=1}^n d(x_{m-1},x_m)\\
    &\leq 170 \mathcal H^2(\Omega)+ L_0 \varepsilon  \ell_d(F(\Delta)).
\end{align*}
We choose a sufficiently small $\varepsilon$ so that the second term of the sum is bounded by $\mathcal H^2 (\Omega)$. Then, by \Cref{thm:besicovitch}, we have \[\mathcal H^2(\Omega) { \leq (4/\pi) \cdot 16 \mathcal H^2(T)\leq 32\mathcal H^2(T)}.\] It follows that $\mathcal H^2(S)\leq L\mathcal H^2(T)$  {for $L = 32\cdot 171$.} 

Finally, we verify \ref{item:filling_1}. Since  $\ell_{|\cdot|}(\partial P_k)<\varepsilon$, we have
\[\diam_{d_S} (S(P_k)) \leq \frac{3}{4} \ell_{d_S}(\partial S(P_k)) = \frac{12}{4} \ell_{|\cdot|}(\partial P_k) \leq 3\varepsilon.\]
Moreover,
$$\diam_{d_S}(S(x_m))  \leq \frac{3}{4} \ell_{d_S}(\partial S(x_m))\leq \frac{3}{4}\cdot 17 d(x_{m-1},x_m) \leq 13\varepsilon .$$
Therefore, by \eqref{ineq:diameter}, 
\begin{align*}
    \diam_{d_S}(S) & \leq \diam_{d_S}(S_1) + 2\max_{k\in K} \diam_{d_S}(S(P_k))\\
    &\qquad\qquad+ 2\max_{m\in \{1,\dots,n\}} \diam_{d_S}(S(x_m)) \\ 
    & \leq (48\diam_d(T) +2\varepsilon) + 6\varepsilon + 26\varepsilon.
\end{align*}
Choose $\varepsilon$ so that $34\varepsilon < \diam_d (T)$. Thus $\diam_{d_S}(S) \leq 49 \diam_d(T)$. This completes the proof.
\end{proof}

\bigskip

\section{Building the approximating surfaces} \label{sec:approximating_surfaces}

This section is dedicated to the proof of \Cref{thm:main}. First, we carry out some technical preparations in \Cref{sec:improved_triangulations}. We then give the proof of \Cref{thm:main} in \Cref{sec:1.1_proof}. We conclude this section with a discussion of the case where $X$ is homeomorphic to $\mathbb{C}$, in preparation for the proof of \Cref{thm:one-sided_qc}.

\bigskip

\subsection{Improved triangulations} \label{sec:improved_triangulations}

We start by stating the main result by Creutz and the second-listed author in \cite{CR:21} on the existence of {decompositions} of a length surface {into non-overlapping convex triangles}. Let $X$ be a length surface. We say that a collection $\mathcal T$ of non-overlapping closed Jordan regions $T\subset X$ is a \textit{geometric triangulation} of $X$ if it is locally finite, it covers $X$, and each $T\in \mathcal T$ is a triangular disk, endowed with the restriction of the metric of $X$.  We remark that a {geometric} triangulation is not necessarily a triangulation in the usual topological sense, since we do not require that the edges of triangles match exactly.

We employ the following terminology. Recall that a set $P \subset X$ is \textit{convex} if any two points $x,y\in P$ can be joined by a geodesic contained in $P$, and in this case $P$ is a length space with the restriction of the metric of $X$ and the inclusion map from $P$ to $X$ is an isometric embedding. If all triangular disks in the geometric triangulation $\mathcal T$ of $X$ are convex, then we say that $\mathcal T$ is \textit{convex}. For a {geometric} triangulation $\mathcal T$, we also define $\mesh(\mathcal T)$ to be the supremum of diameters of triangular disks $T\in \mathcal T$. Finally, a surface $X$ has \textit{polygonal boundary} if each boundary component of $X$ consists of non-overlapping geodesics. Note that if a boundary component of $X$ is homeomorphic to $\R$, then it may consist of infinitely many such geodesics.  Moreover, any surface whose boundary is empty necessarily has polygonal boundary.

Now we state the main result of \cite{CR:21}.

\begin{thm} \label{thm:triangulation}
Let $X$ be a length surface with polygonal boundary and $\varepsilon>0$. Then there exists a convex triangulation $\mathcal T$ of $X$ with $\mesh(\mathcal T)<\varepsilon$.
\end{thm}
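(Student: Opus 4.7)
The goal is to construct a locally finite collection of non-overlapping convex triangular disks covering $X$, each of diameter less than $\varepsilon$. The plan has three parts: reduction to the compact case, a local convexity/injectivity scale, and an explicit construction using a net of vertices joined by geodesics.

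\textbf{Reduction to the compact case.} First I would reduce to the situation where $X$ is compact by exhausting $X$ with an increasing sequence of compact subsurfaces $X_1 \subset X_2 \subset \cdots$ with polygonal boundary, chosen so that each $\partial X_n$ is a finite union of geodesic segments in $X$. I would construct a triangulation $\mathcal{T}_n$ of $X_n$ which refines $\mathcal{T}_{n-1}$ on $X_{n-1}$; the union $\bigcup_n \mathcal{T}_n$ is then the desired locally finite convex triangulation.

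\textbf{Local injectivity scale.} The key ingredient for the compact case is a uniform scale $r > 0$ such that, for every $p \in X$, the closed ball $\overline{B}(p, r)$ is homeomorphic to a closed topological disk (or half-disk if $p \in \partial X$), is convex, and has the property that geodesics between points in $B(p,r/2)$ are unique and depend continuously on their endpoints. Existence of such a uniform scale in a compact length surface should follow from a compactness-continuity argument based on the fact that $X$ is locally topologically $\mathbb{R}^2$ or a half-plane; the polygonal boundary hypothesis is used here to control the behavior at $\partial X$.

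\textbf{Construction of the triangulation.} Fix $\varepsilon' \ll \min(\varepsilon, r)$. I would choose a maximal $\varepsilon'$-separated subset $V \subset X$ that contains all vertices of the polygonal boundary, and subdivide the geodesic edges of $\partial X$ by points of $V$ so that every resulting boundary arc has length less than $\varepsilon'$. For each pair $v_i, v_j \in V$ with $d(v_i, v_j) < C \varepsilon'$ (for a fixed constant $C$, say $C = 10$), I would join $v_i, v_j$ by the unique short geodesic given by the previous step. After a generic perturbation of $V$ to remove accidental coincidences of geodesics, the union of $\partial X$ with this family of geodesic segments is a locally finite embedded graph $G$ in $X$ whose complementary regions are polygonal of diameter $O(\varepsilon')$. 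Each such region can then be subdivided into triangular disks by adding further geodesic diagonals between vertices of $G$.

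\textbf{Main obstacle.} The delicate part is verifying that each resulting triangular disk is actually \emph{convex}, not merely bounded by three geodesics. In a general length surface without curvature bounds, a triangle with geodesic sides need not be convex at any a priori fixed scale. My approach would be to show that if $\varepsilon'$ is small enough relative to $r$, then each triangular region $T$ in the construction lies inside $B(p, r)$ for some $p \in V$, and within this ball geodesic uniqueness forces any geodesic $\gamma$ joining two points of $T$ to stay in $T$: otherwise $\gamma$ would cross one of the geodesic edges of $T$ at an interior point, contradicting the uniqueness of short geodesics in $B(p, r)$ together with the Jordan curve theorem applied to $\partial T$. Making this argument fully rigorous near $\partial X$, and organizing the subdivision so that convexity survives the passage from polygonal faces to triangular faces, is where I expect the real work to lie; this is presumably the content of the cited paper~\cite{CR:21}.
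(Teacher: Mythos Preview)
The paper does not prove this theorem at all: it is quoted verbatim as the main result of \cite{CR:21} and used as a black box. So there is no ``paper's own proof'' to compare against here.

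That said, your proposal has a genuine gap. Your ``local injectivity scale'' step assumes that on a compact length surface there is a uniform $r>0$ such that closed balls $\overline{B}(p,r)$ are convex and geodesics between points in $B(p,r/2)$ are \emph{unique} and depend continuously on their endpoints. This is false in general. A length surface carries no curvature bound, and without one there is no scale below which geodesics are unique: one can have arbitrarily small pairs of points joined by multiple geodesics, and geodesics need not vary continuously with their endpoints at any scale. (Think of a surface that, in every neighborhood of a point, contains flat cylinders of arbitrarily small circumference, or more exotic examples where the cut locus accumulates at a point.) Your convexity argument in the final paragraph explicitly relies on this uniqueness to prevent a geodesic from crossing an edge of $T$, so the whole scheme collapses once uniqueness fails.

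This is exactly the difficulty that makes the result in \cite{CR:21} nontrivial. The lemmas from that paper cited here (their Lemma~3.3 and Lemma~4.3) suggest the actual argument works by carefully building up a finite system of geodesics that form an embedded graph and showing inductively that the complementary regions stay convex, without ever appealing to local geodesic uniqueness. You correctly identified at the end that convexity is ``where the real work lies,'' but the mechanism you propose for obtaining it is not available.
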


For the proof of \Cref{thm:main}, we need {to refine the triangulation given by} \Cref{thm:triangulation} to guarantee that the edge graph is approximately isometric to the original space $X$. This is similar to Proposition 7.5.5 of \cite{BBI:01}. 

Given two triangulations ${\mathcal T_1}$ and $\mathcal T_2$ of $X$, we say that $\mathcal T_2$ is a \textit{refinement} of $\mathcal T_1$ if for every triangular disk $T\in \mathcal T_2$ there exists a triangular disk $T'\in \mathcal T_1$ such that $T\subset T'$. For a triangulation $\mathcal{T}$ of $X$, let $\mathcal{E}(\mathcal{T})$ denote the embedded graph in $X$ consisting of the edges of triangles in $\mathcal{T}$. This is equipped with the length metric induced by {$X$}. If $D\subset X$ is {a connected set that is} the union of triangular disks in $\mathcal T$, then we denote by $\mathcal E( \mathcal T| D)$ the set $\mathcal E(\mathcal T)\cap D$, again equipped with the {induced} length metric.

\begin{prop} \label{prop:good_triangulation}
{Let $X$ be a length surface and $\varepsilon>0$. Then for each convex triangulation $\widetilde {\mathcal T}$ of $X$ with $\mesh(\widetilde {\mathcal T})<\varepsilon/8$ there exists a convex triangulation $\mathcal T$ that is a refinement of $\widetilde {\mathcal T}$ with the property that  the inclusion map from $\mathcal{E}(\mathcal{T})$ to $X$ is an $\varepsilon$-isometry. More generally, if $D$ is a connected union of triangular disks $T\in \widetilde{\mathcal T}$, then the inclusion map from $\mathcal E(\mathcal T| D)$ to $D$, equipped with the length metric induced by $X$, is an $\varepsilon$-isometry.}
\end{prop}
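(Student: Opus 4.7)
The plan is to refine $\widetilde{\mathcal T}$ by sub-triangulating each $T\in \widetilde{\mathcal T}$ so that the one-skeleton of the refinement contains many geodesic chords of $T$; these chords will allow edge-paths in $\mathcal E(\mathcal T)$ to approximate geodesics in $X$ with small additive error. The density part of the $\varepsilon$-isometry is immediate from $\mesh(\widetilde{\mathcal T})<\varepsilon/8$, so all the work lies in bounding the length-metric inequality.

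For the construction, I would fix positive numbers $\{\eta_T\}_{T\in \widetilde{\mathcal T}}$ with $\sum_T \eta_T < \varepsilon/16$, and for each $T$ choose a finite set $V_T\subset \partial T$ that is $\eta_T$-dense in the length metric of $\partial T$ and contains the vertices of $T$. For each pair $\{v,w\}\subset V_T$, select a geodesic $[v,w]_T\subset T$; by convexity of $T$ in $X$, this is also a geodesic in $X$ of length $d_X(v,w)$. The finite family of chords $\{[v,w]_T\}$ partitions $T$ into topological disks bounded by geodesic arcs, and \Cref{thm:triangulation} gives a convex triangulation of each such subregion with arbitrarily small mesh; collecting them produces a sub-triangulation $\mathcal T_T$ of $T$ whose one-skeleton $\mathcal E(\mathcal T_T|T)$ contains every chord $[v,w]_T$. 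Set $\mathcal T=\bigcup_T \mathcal T_T$.

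To establish the length-metric inequality, let $x,y\in \mathcal E(\mathcal T)$ and take a near-geodesic $\gamma$ in $X$ from $x$ to $y$ with $\ell(\gamma)\leq d_X(x,y) + \varepsilon/4$. Using the convexity of each $T\in \widetilde{\mathcal T}$, I modify $\gamma$ so that it intersects each triangle in $\widetilde{\mathcal T}$ in a single arc: whenever $\gamma$ leaves and then re-enters some $T$, I replace the sub-path between the first entry and the last exit by the intra-$T$ geodesic between these points, which stays in $T$ by convexity and cannot increase total length. For each triangle $T$ crossed by the modified $\gamma$, let $a,b\in \partial T$ be the entry and exit points and choose $a',b'\in V_T$ with $d_{\partial T}(a,a'), d_{\partial T}(b,b')\leq \eta_T$. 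The path in $\mathcal E(\mathcal T|T)$ consisting of the $\partial T$-segment from $a$ to $a'$, the chord $[a',b']_T$, and the $\partial T$-segment from $b'$ to $b$ has length at most $2\eta_T + d_T(a',b') \leq d_T(a,b) + 4\eta_T \leq \ell(\gamma\cap T) + 4\eta_T$. Summing over triangles produces an edge-path from $x$ to $y$ of length at most $\ell(\gamma) + 4\sum_T \eta_T < d_X(x,y) + \varepsilon$, as desired. The statement for a connected union $D$ follows from the same argument applied to a near-geodesic confined to $D$, using only triangles contained in $D$.

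The main obstacle I anticipate is verifying that the sub-triangulation $\mathcal T_T$ is in fact convex as a triangulation of $X$. In general length surfaces, subregions obtained by cutting a convex set by crossing geodesic chords need not themselves be convex subsets of the ambient set, so \Cref{thm:triangulation} applied to a subregion produces triangles that are convex within the subregion but not a priori in $T$ or $X$. Resolving this will likely require an additional argument---for instance, showing that chord-subregions of sufficiently small diameter inherit convexity from $T$, or performing an inductive refinement in which chord geodesics are added one at a time while maintaining a convex triangulation of $T$ at each stage, exploiting the flexibility in \Cref{thm:triangulation} to control the mesh after each addition.
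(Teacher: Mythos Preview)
Your approach is essentially the paper's: refine each $\widetilde T_j\in\widetilde{\mathcal T}$ by choosing a finite dense set on $\partial\widetilde T_j$ (the paper uses tolerance $2^{-j}\varepsilon'$, just as you use summable $\eta_T$), insert all pairwise geodesic chords, sub-triangulate the resulting pieces via \Cref{thm:triangulation}, and then prove the length estimate by an inductive replacement along a near-geodesic.

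The obstacle you flag is exactly the right one, and the paper resolves it by invoking two lemmas from \cite{CR:21} rather than by an ad hoc argument. Lemma~4.3 of \cite{CR:21} guarantees that the inserted chords can be chosen so that the resulting system is a finite graph (a point you pass over: in a general length surface two geodesics can overlap on an arc, so the chords need not a priori partition $\widetilde T_j$ into polygonal disks). Lemma~3.3 of \cite{CR:21} then asserts precisely that the polygonal subregions obtained by cutting a convex triangular disk with such a geodesic system are again convex, so \Cref{thm:triangulation} applies to each. Your proposed inductive workaround would amount to re-proving these facts.

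One minor imprecision in your replacement step: the two-stage procedure (first force single geodesic arcs per triangle, then swap for edge-paths) is delicate, because processing a later triangle can split an earlier triangle's arc in two. The paper avoids this by doing a single pass: at step $j$ it replaces the subcurve between the first and last hits of $\widetilde T_j$ \emph{directly} by the edge-path $z_1\to w_1\to w_2\to z_2$ in $\mathcal E(\mathcal T)\cap\widetilde T_j$, and since this replacement stays inside $\widetilde T_j$ it never reintroduces interior crossings of already-processed triangles. The paper also first moves arbitrary $x,y\in\mathcal E(\mathcal T)$ to nearby points $x',y'\in\mathcal E(\widetilde{\mathcal T})$ at cost $\le\varepsilon'$ each, so that the curve's endpoints lie on coarse-triangle boundaries; you should do the same, since your $x$ could sit on a chord interior to some $\widetilde T$ and then have no ``entry point'' on $\partial\widetilde T$.
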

\begin{proof}
Let $(X,d)$ be a length surface, $\varepsilon>0$, and $\varepsilon'< \varepsilon/8$. Let $\widetilde{\mathcal{T}}$ be a convex triangulation of $X$ such that $\diam(\widetilde{T}) \leq \varepsilon'$ for every $\widetilde{T} \in \widetilde{\mathcal{T}}$. Enumerate $\widetilde {\mathcal{T}}$ as $\{\widetilde{T}_j\}_{j\in \widetilde{J}}$, where $\widetilde{J} = \mathbb{N}$ or $\widetilde{J} = \{1, \ldots, n\}$ for some $n \in \mathbb{N}$.

Consider a triangle $\widetilde{T}_j$. Then $\partial \widetilde{T}_j$ is the union of three geodesics $\widetilde{\alpha}_j^1, \widetilde{\alpha}_j^2, \widetilde{\alpha}_j^3$. Pick a finite set of points $\mathcal{W}_j=\{w_j^1,\ldots, w_j^{k_j}\}$ in $\partial \widetilde{T}_j$ such that every point $x\in \partial \widetilde{T}_j$ is within distance $2^{-j}\varepsilon'$ of a point in $\mathcal{W}_j$ on the same edge as $x$. We also include the vertices of $\partial \widetilde{T}_j$ in the collection $\mathcal{W}_j$.  For each pair of points in $\mathcal{W}_j$ we add a geodesic in $\widetilde{T}_j$ connecting them. By applying Lemma 4.3 of \cite{CR:21} inductively, we can do this so that the resulting system of geodesics is a finite graph.

These additional geodesics subdivide $\widetilde{T}_j$ into a finite number of smaller polygonal disks. By Lemma 3.3 in \cite{CR:21}, each of the resulting polygonal disks is still convex and thus is a length space with polygonal boundary (with the restriction of the metric of $X$). By Theorem \ref{thm:triangulation},  we can subdivide these polygonal disks further so that we again have triangular disks. This gives a new convex triangulation $\mathcal{T} = \{T_j\}_{j\in J}$, with the same notational conventions that we used for $\widetilde{\mathcal T}$, that refines $\widetilde{\mathcal T}$. 

Let $D$ be a connected union of triangular disks of the original triangulation $\widetilde{\mathcal T}$. We show that the inclusion map from $\mathcal{E}(\mathcal{T}|D)$ to $D$ is an $\varepsilon$-isometry, where $D$ is endowed with the length metric $d_D$ induced by $X$. First, note that $\mathcal{E}(\mathcal{T}|D)$ contains $\mathcal{E}(\widetilde{\mathcal{T}}|D)$, which is within $d_D$-distance $\varepsilon'<\varepsilon$ from every point of $D$. Hence  $\mathcal{E}(\mathcal{T}|D)$ is $\varepsilon$-dense in $D$. 

Let $d_\mathcal{T}$ denote the length metric on $\mathcal{E}(\mathcal{T}|D)$. Clearly we have $d_D \leq d_{\mathcal{T}}$ on the set $\mathcal{E}(\mathcal{T}|D)$, which is a subset of $D$. We claim that $ d_{\mathcal{T}}< d_D+\varepsilon$ on  $\mathcal{E}(\mathcal{T}|D)$ and this will complete the proof. Let $x,y \in \mathcal{E}(\mathcal{T}|D)$. Then, by the construction of $\mathcal T$, there exist points $x',y' \in \mathcal{E}(\widetilde{\mathcal{T}}|D)$ such that ${d_D(x,x')=}d_\mathcal{T}(x,x') \leq \varepsilon'$ and ${d_D(y,y')=}d_\mathcal{T}(y,y') \leq \varepsilon'$. In particular, we have $d_\mathcal{T}(x,y) \leq d_\mathcal{T}(x',y') + 2\varepsilon'$ and $d_D(x',y') \leq d_D(x,y) + 2 \varepsilon'$. Thus it suffices to show that $d_{\mathcal T} (x',y') \leq d_D(x',y')+ 4\varepsilon'$ for every $x',y'\in \mathcal{E}(\widetilde{\mathcal{T}}|D)$. 

Let $x',y' \in \mathcal{E}(\widetilde{\mathcal{T}}|D)$ and $\gamma$ a curve in $D$ joining $x'$ and $y'$. Inductively define curves $\gamma_j$ for each $j \in \widetilde{J}$ in the following way. Take $\gamma_0 = \gamma$. If $|\gamma_{j-1}|$ intersects the interior of $\widetilde{T}_j$, then let $z_1,z_2$ denote the first and last points of intersection with $\widetilde{T}_j$. Let $\gamma_{z_1}^{z_2}$ denote the maximal subcurve of $\gamma$ from $z_1$ to $z_2$. Choose points $w_1,w_2 \in \mathcal{W}_j$ so that $d(w_k,z_k)\leq 2^{-j}\varepsilon'$ and $w_k$ belongs to the same edge as $z_k$ for each $k \in \{1,2\}$. There are geodesics from $z_1$ to $w_1$, from $w_1$ to $w_2$, and from $w_2$ to $z_2$ contained in $\mathcal{E}(\mathcal{T}|D)$. Let $\widetilde{\gamma}_{z_1}^{z_2}$ be the concatenation of these three paths. It is immediate that 
\begin{align*}
    \ell(\widetilde{\gamma}_{z_1}^{z_2})&=d(z_1,w_1)+d(w_1,w_2)+d(w_2,z_2) \\
    &\leq d(z_1,z_2)+ 2d(z_1,w_1)+2d(w_2,z_2) \\
    &\leq  \ell(\gamma_{z_1}^{z_2})+ 4\cdot 2^{-j}\varepsilon'.
\end{align*}
Let $\gamma_j$ be the curve formed by replacing $\gamma_{z_1}^{z_2}$ with $\widetilde{\gamma}_{z_1}^{z_2}$. If $|\gamma_{j-1}|$ does not intersect the interior of $\widetilde{T}_j$, then take $\gamma_j = \gamma_{j-1}$. Note that $\gamma$ intersects only finitely many of the triangles in $\widetilde{\mathcal{T}}$, so this process must terminate after finitely many steps. This yields a curve $\widetilde{\gamma}$ in $D$. It is immediate that $\ell(\widetilde{\gamma}) \leq \ell(\gamma) + 4\varepsilon'$. Since $\gamma$ is arbitrary, we have that $d_\mathcal{T}(x',y') \leq d_D(x',y') + 4\varepsilon'$.
\end{proof}

\Cref{thm:triangulation} requires the surface to have polygonal boundary. Since we do not impose this restriction in \Cref{thm:main}, we give the following additional lemma on polygonal approximation of the boundary.

\begin{lemm} \label{lemm:polygonal_boundary}
Let $X$ be a length surface and $\varepsilon>0$. There exists a convex set $\widetilde{X} \subset X$ homeomorphic to $X$ having polygonal boundary such that the inclusion map from $\widetilde{X}$ to $X$ is an $\varepsilon$-isometry.
\end{lemm}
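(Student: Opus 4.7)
The plan is to approximate each boundary component of $X$ by a nearby polygonal curve consisting of short geodesic arcs, and to take $\widetilde{X}$ to be the subsurface bounded by these approximations. If $\partial X = \emptyset$ there is nothing to do, so take $\widetilde{X} = X$. Assume therefore that $\partial X \neq \emptyset$, and let $\{C_\alpha\}$ denote the boundary components of $X$, each homeomorphic to $S^1$ or $\mathbb{R}$.

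For each $C_\alpha$, I would pick a locally finite sequence of points $\{p_i\} \subset C_\alpha$ with consecutive pairs $p_i, p_{i+1}$ at $d_X$-distance at most $\delta$, where $\delta \ll \varepsilon$ will be chosen small. Since $X$ is a $2$-manifold with boundary, each $p_i$ has a neighborhood in $X$ homeomorphic to a closed half-disk, and inside such a neighborhood we can pick interior points $q_i \in \Int(X)$ with $d_X(p_i, q_i) < \delta$. Connecting each consecutive pair $q_i, q_{i+1}$ by a geodesic $\alpha_i$ in $X$ of length at most $3\delta$ contained in $\Int(X)$ (which can be arranged by working locally in a half-disk chart for $\delta$ small), we obtain a concatenation $\widetilde{C}_\alpha = \bigcup_i \alpha_i$. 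By a careful choice of the $q_i$ we can arrange that each $\widetilde{C}_\alpha$ is a simple curve homeomorphic to $C_\alpha$, at Hausdorff distance $O(\delta)$ from $C_\alpha$, and cobounding with $C_\alpha$ a topological annulus or strip $V_\alpha \subset X$ of diameter $O(\delta)$. Finally we set $\widetilde{X} = X \setminus \bigcup_\alpha \Int(V_\alpha)$.

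It is straightforward to verify that $\widetilde{X}$ is homeomorphic to $X$, since the sets $\Int(V_\alpha)$ are disjoint open collar neighborhoods of distinct boundary components; that $\widetilde{X}$ has polygonal boundary, each $\widetilde{C}_\alpha$ being a locally finite concatenation of geodesics of $X$; and that $\widetilde{X}$ is $\varepsilon$-dense in $X$, provided $\delta$ is chosen small enough relative to $\varepsilon$. The remaining and crucial point is convexity: for every $x, y \in \widetilde{X}$ there should be a path in $\widetilde{X}$ of length $d_X(x,y)$. Granted convexity, we have $d_{\widetilde{X}} = d_X$ on $\widetilde{X}$, and the inclusion $\widetilde{X} \hookrightarrow X$ becomes an $\varepsilon$-isometry automatically.

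The main obstacle, which governs the choice of $\delta$, is establishing this convexity. The idea is that for $\delta$ small enough each collar $V_\alpha$ is so thin that no $X$-geodesic between points of $\widetilde{X}$ can penetrate it: if an $X$-geodesic $\gamma$ from $x$ to $y$ entered $V_\alpha$ at $a \in \widetilde{C}_\alpha$ and exited at $b \in \widetilde{C}_\alpha$, the portion of $\gamma$ inside $V_\alpha$ would realize $d_X(a,b)$, and by the local half-disk structure of $X$ near $C_\alpha$ this segment could be rerouted along $\widetilde{C}_\alpha$ without increasing length, with strict inequality unless $\gamma$ already lay in $\widetilde{X}$. Making this rerouting precise near the polygonal vertices of $\widetilde{C}_\alpha$, and choosing the $q_i$ and $\alpha_i$ so that each $\widetilde{C}_\alpha$ is locally convex from the $\widetilde{X}$-side, is the principal technical challenge. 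Once convexity is secured, the $\varepsilon$-isometry property follows at once.
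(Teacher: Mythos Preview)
Your overall plan---approximate each boundary component by a nearby broken geodesic and take $\widetilde{X}$ to be the subsurface it bounds---matches the paper's. The decisive difference is where you place the vertices. You move from boundary points $p_i$ to \emph{interior} points $q_i$ and connect those by geodesics $\alpha_i$. The paper instead keeps the vertices $y_j$ on the boundary component $Y$ itself and joins consecutive $y_j,y_{j+1}$ by geodesics $\gamma_j$ in $X$. This is not a cosmetic choice: it is exactly what makes the convexity argument go through.

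With interior vertices, convexity can genuinely fail, and your rerouting sketch does not survive a simple test. In the Euclidean half-plane $\{y\ge 0\}$ take $q_1=(0,\delta)$, $q_2=(1,2\delta)$, $q_3=(2,\delta)$. Then $\alpha_1\cup\alpha_2$ is a ``tent'' and the straight segment from $q_1$ to $q_3$ lies strictly inside the collar $V_\alpha$ you have removed, yet is strictly shorter than the path along $\widetilde{C}_\alpha$. So $\widetilde{X}$ is not convex. No appeal to ``local half-disk structure'' or to making $\widetilde{C}_\alpha$ locally convex from the $\widetilde{X}$-side repairs this: local convexity of a broken geodesic with interior vertices is a constraint on the metric, not something you can arrange by choosing the $q_i$ in a general length surface. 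Two further steps you treat as routine are also nontrivial in this generality: a geodesic between two interior points need not stay in $\Int(X)$, and arranging that the union $\bigcup_i \alpha_i$ is a simple curve homeomorphic to $C_\alpha$ requires real work.

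The paper's mechanism avoids all of this. Because the endpoints of each $\gamma_j$ lie on $\partial X$, the region $W_j$ enclosed by $\gamma_j$ and $\partial X$ has only $\gamma_j$ as its ``interior'' boundary. After an inductive redefinition making the $W_j$ pairwise disjoint, any excursion of a path into $X\setminus\widetilde{X}$ decomposes into excursions into individual $W_j$'s, each entering and exiting through the \emph{single} geodesic $\gamma_j$; the subarc of $\gamma_j$ is then automatically no longer than the excursion. That is the missing idea in your proposal: anchor the polygonal curve to the manifold boundary so that each cell of the collar is bounded by one geodesic, and make the cells disjoint.
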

Here, the distance between two points in $\widetilde X$ is the length of a shortest curve in $X$ that connects them. By convexity, there is such a curve contained in $\widetilde X$, so this implies that the length metric on $\widetilde{X}$ is the same as the metric inherited from $X$.

\begin{proof}
For each component $Y$ of the boundary $\partial X$, apply the following procedure. Note that $Y$ is homeomorphic to either $\mathbb{R}$ or $\mathbb{S}^1$; assume in the following that it is homeomorphic to $\mathbb{R}$. 

As a consequence of the tubular neighborhood theorem \cite[p. 76]{GP:74}, there is a neighborhood $U_Y$ of $Y$ in $X$ that is homeomorphic to the closed upper half-plane in $\mathbb{C}$, denoted by $\mathbb{H}$. By restricting to a smaller neighborhood if needed, we may assume that $U_Y$ is contained in $\bigcup_{y \in Y} B(y,\dist(y,\partial X \setminus Y)/2)$. Thus, for any two distinct components $Y_1,Y_2 \subset \partial X$, the neighborhoods $U_{Y_1}$ and $U_{Y_2}$ are disjoint. 

Choose a sequence of points $(y_j)_{j=-\infty}^\infty$ in $Y$, indexed in increasing order according to the parametrization of $Y$ by $\mathbb{R}$. By adding more points if needed, we may assume that $d(y_j, y_{j+1}) < d(y_j, X \setminus U_Y)$ for each $j \in \mathbb{Z}$. In particular, since the closed ball at $y_j$ of radius $d(y_j, y_{j+1})$ is compact, each point $y_j$ is joined to $y_{j+1}$ by a geodesic $\gamma_j$ contained in $U_Y$. Moreover, we may assume the (possibly empty) open region $W_j \subset X$ enclosed by $Y$ and $\gamma_j$ has diameter at most $\varepsilon$ (given in the statement). Note that each component of $W_j$ is a Jordan region. This follows from  Ker\'ekj\'art\'o's theorem \cite[Chapter IV.16, p.~168]{New:51}.

Finally, by redefining the geodesics if needed, we may assume that for all distinct values $j,k \in \mathbb{Z}$ the sets $W_j$ and $W_k$ are disjoint. To justify this claim, fix a bijection $\varphi\colon \mathbb{N} \to \mathbb{Z}$ and apply the following inductive procedure. For some $n \in \mathbb{N}$, suppose that $\gamma_{\varphi(1)}, \ldots, \gamma_{\varphi(n)}$ are such that the sets $W_{\varphi(1)}, \ldots, W_{\varphi(n)}$ are mutually disjoint. Suppose that $\gamma_{\varphi(n+1)}$ intersects $W_{\varphi(1)}$. For each component $(t_1,t_2)$ of $\gamma_{\varphi(n+1)}^{-1}(W_{\varphi(1)})$, we redefine $\gamma_{\varphi(n+1)}$ on $[t_1,t_2]$ to coincide with the subarc of $\gamma_{\varphi(1)}$ from $\gamma_{\varphi(n+1)}(t_1)$ to $\gamma_{\varphi(n+1)}(t_2)$; note that the interior of this subarc lies inside $W_{\varphi(n+1)}$. The resulting curve, still denoted by $\gamma_{\varphi(n+1)}$, does not intersect $W_{\varphi(1)}$ and is also a geodesic. As a byproduct, we also have that $\gamma_{\varphi(1)}$ does not intersect $W_{\varphi(n+1)}$ and that $W_{\varphi(1)}$ and $W_{\varphi(n+1)}$ are disjoint. Since $W_{\varphi(1)}, \ldots, W_{\varphi(n)}$ are mutually disjoint, we also see that this redefining of $\gamma_{\varphi(n+1)}$ does not introduce new intervals of intersection between $\gamma_{\varphi(n+1)}$ and $W_{\varphi(k)}$ for some $k \in \{1, \ldots, n\}$. Finally, this procedure only makes the set $W_{\varphi(n+1)}$ smaller, so the property that $W_{\varphi(n+1)}$ has diameter at most $\varepsilon$ remains. Apply this same redefining procedure for $W_{\varphi(k)}$ for all $k \in \{2, \ldots, n\}$. This completes the inductive step.

Let $\widetilde{E}_Y = \bigcup_{j=-\infty}^\infty |\gamma_j|$. Consider its image $\widetilde{E}_Y'$ in $\mathbb{C}$ under the homeomorphism from $U_Y$ to $\mathbb{H}$. { Let $V$ denote the unbounded component of $\mathbb{C} \setminus \widetilde{E}_Y'$ in $\mathbb{H}$ and $\widehat{V}$ the same set as a subset of the Riemann sphere $\widehat{\mathbb{C}}$. Then $\partial \widehat{V}$ is connected and locally connected. By Theorem IV.6.7 in \cite{Wil:49}, there is a Jordan curve in $\partial \widehat{V}$ (in fact, $\partial \widehat{V}$ itself) separating $\widehat{V}$ and $\widehat{\mathbb{C}} \setminus \overline{\widehat{V}}$. From this we obtain an arc $E_Y'$ in $\partial V$ separating $V$ and $\mathbb{C} \setminus \overline{V}$.}  The preimage of $E_Y'$ is an open arc $E_Y \subset \widetilde{E}_Y$ in $X$. Let $V_Y$ denote the component of $U_Y \setminus E_Y$ not intersecting $Y$.  Observe that, since $U_Y$ and $V_Y$ are both topological closed half-planes there is a homeomorphism from $U_Y$ to $V_Y$ that is the identity outside of some neighborhood of the closed region bounded by $E_Y$ and $Y$. 

If $Y$ is homeomorphic to $\mathbb{S}^1$, then we apply a similar procedure to obtain a Jordan curve $E_Y$ separating $Y$ and $\partial X \setminus Y$ and sets $U_Y$ and $V_Y$. 

Let $E = \bigcup E_Y$, where the union ranges over all components $Y$ of $\partial X$. Note that $E_{Y_1}$ and $E_{Y_2}$ are disjoint for any distinct components $Y_1, Y_2 \subset \partial X$. Then $X \setminus E$ contains a unique component $X'$ not intersecting $\partial X$. Let $\widetilde{X}$ be the union of the sets $E_Y$ and $X'$. Gluing the respective homeomorphisms for each component $Y \subset \partial X$  and the identity map on a suitable subset of $X'$, we obtain a  homeomorphism from $X$ to $\widetilde{X}$. 

Next, we show that $\widetilde{X}$ is a convex subset of $X$. Consider a path $\gamma \colon [t_1,t_2] \to X$ between two points in $\widetilde{X}$. We wish to find a path $\widetilde{\gamma}$ contained in $\widetilde{X}$ of shorter length connecting the same endpoints. If $\gamma$ is already contained in $\widetilde{X}$, then we take $\widetilde{\gamma} = \gamma$. Otherwise, by restricting to subcurves if needed, we may assume that $\gamma(t_1)$ and $\gamma(t_2)$ are in the same boundary component $E_Y\subset \partial \widetilde{X}$ and $\gamma((t_1,t_2)) \subset X \setminus \widetilde{X}$. Let $\widetilde{\gamma} \colon [t_1,t_2] \to X$ be the path in $E_Y$ from $\gamma(t_1)$ to $\gamma(t_2)$.  

We claim that $\ell(\widetilde{\gamma}) \leq \ell(\gamma)$. To show this, define a path $\zeta$ in the following way: for each component $(t_3,t_4)$ of $\gamma^{-1}(W_j)$ for each $j \in \mathbb{Z}$, define $\zeta$ on $[t_3,t_4]$ to traverse the subarc of $\gamma_j$ homotopic to $\gamma|_{[t_3,t_4]}$ relative to the endpoints.  Since $\gamma_j$ is a geodesic, we must have $\ell(\zeta|_{[t_3,t_4]}) \leq \ell(\gamma|_{[t_3,t_4]})$. Define $\zeta$ to coincide with $\gamma$ otherwise. It follows that $\ell(\zeta) \leq \ell(\gamma)$. 

Consider a point $x = \widetilde{\gamma}(t)$ for some $t \in (t_1,t_2)$. Then $x \in \partial W_j$ for some $j \in \mathbb{Z}$. Since each component of $W_j$ is a Jordan region, we can find a curve $\eta$ from $x$ to $Y \setminus |\gamma_j|$ contained in $W_j$ except for its endpoints. Observe that the curve $\eta$ separates $X\setminus \widetilde{X}$. Now $\gamma(t_1)$ and $\gamma(t_2)$ are in different components of $E_Y \setminus \{x\}$. Consequently, $\gamma^{-1}(W_j)$ must contain a component $(t_3,t_4)$ such that $\gamma(t_3)$ and $\gamma(t_4)$ are in different components of $\partial W_j \setminus |\eta|$. Then $\zeta|_{[t_3,t_4]}$ joins the same endpoints and has image contained in $|\gamma_j|$. It follows that $x \in |\zeta|$. Thus $|\widetilde{\gamma}| \subset |\zeta|$, and we have $\ell(\widetilde{\gamma}) \leq \ell(\zeta) \leq \ell(\gamma)$. 

It remains to show that the inclusion map $\widetilde{X} \to X$ is a $\varepsilon$-isometry. Since $\widetilde X$ is convex, it follows that the inclusion map is an isometric embedding. Finally, each point in $X$ is within distance $\varepsilon$ of a point in $\widetilde{X}$. This completes the proof.
\end{proof}

\bigskip

\subsection{Proof of \Cref{thm:main}} \label{sec:1.1_proof}

Let $(X,d)$ be a length surface. Choose a sequence $(\varepsilon_n)_{n=1}^\infty$ of positive reals satisfying $\varepsilon_n \to 0$ as $n \to \infty$. 

We apply \Cref{lemm:polygonal_boundary} to find a surface $\widetilde{X}_n \subset X$ that is homeomorphic to $X$ and has polygonal boundary such that the inclusion map is a $\varepsilon_n$-isometry. Moreover, $\widetilde{X}_n$ is convex as a subset of $X$, so that the restriction of $d$ to $\widetilde{X}_n$ is still a length metric.

Since the space $\widetilde{X}_n$ has polygonal boundary, we can apply \Cref{thm:triangulation} and \Cref{prop:good_triangulation} with the parameter $\varepsilon_n$ to obtain a decomposition $\widetilde{\mathcal{T}}_n$ of $\widetilde{X}_n$ into convex triangular disks with $\mesh( \widetilde{\mathcal T}_n) <\varepsilon_n$. We consider the edge graph $\widetilde{\mathcal{E}}_n = \mathcal{E}(\widetilde{\mathcal{T}}_n)$ as having the induced length metric $\widetilde{d}_n$. As given by \Cref{prop:good_triangulation}, the inclusion map $id\colon \widetilde{\mathcal{E}}_n \to \widetilde{X}_n$ is a $\varepsilon_n$-isometry. More precisely, $d(x,y) \leq \widetilde{d}_n(x,y) < d(x,y) + \varepsilon_n$ for all $x,y \in \widetilde{\mathcal{E}}_n$.

For each triangular disk $T \in \widetilde{\mathcal{T}}_n$ with metric $d$, consider the polyhedral surface $S$ and the corresponding homeomorphism $\varphi_T\colon S \to T$ given by \Cref{thm:triangular_surface}.  Observe that $\varphi_T|_{\partial S}$ is length-preserving as a map from $\partial S$ into $\widetilde{\mathcal{E}}_n$,  with either the metric $d$ or the metric $\widetilde{d}_n$. Thus we may define a length surface $X_n$ by gluing each disk $S$ into $\widetilde{\mathcal{E}}_n$ along the map $\varphi_T$. Denote the metric on $X_n$ by $d_n$. We obtain a homeomorphism $\Phi_n\colon X_n \to \widetilde{X}_n$ by gluing the maps $\varphi_T$. Let $\mathcal{E}_n = \Phi_n^{-1}(\widetilde{\mathcal{E}}_n)$. For each $x \in \mathcal{E}_n$, put $\widetilde{x} = \Phi_n(x) \in \widetilde{\mathcal{E}}_n$.

Since $\widetilde{d}_n$ is a length metric on $\widetilde{\mathcal{E}}_n$ and $d_n$ is a length metric on $X_n$, it is immediate that $d_n(x,y) \leq \widetilde{d}_n(\widetilde{x},\widetilde{y})$ for all $x,y \in \mathcal{E}_n$. 
On the other hand, let $x,y \in \mathcal{E}_n$ and consider an arbitrary path $\gamma$ in $X_n$ from $x$ to $y$. For each disk $S$, consider each component $(a,b)$ of the set $\gamma^{-1}( \Int(S))$. We deduce from \Cref{thm:triangular_surface} \ref{item:filling_4} that $\ell_{d_n}(\gamma|_{(a,b)}) \geq d_S(\gamma(a),\gamma(b))\geq  d(\Phi_n(\gamma(a)),\Phi_n(\gamma(b)))$,  where $d_S$ denotes the length metric on $S$. This implies that $\ell_{d_n}(\gamma) \geq d(\widetilde{x},\widetilde{y})$. Since $\gamma$ is arbitrary, we have
$d(\widetilde{x},\widetilde{y}) \leq d_n(x,y) $ for all $x,y \in \mathcal{E}_n$. In summary, we have
\begin{equation} \label{equ:isometry_estimate}
    d(\widetilde{x},\widetilde{y}) \leq d_n(x,y) \leq \widetilde{d}_n(\widetilde{x},\widetilde{y}) < d(\widetilde{x},\widetilde{y})+\varepsilon_n
\end{equation}
for all $x,y, \in \mathcal{E}_n$.

Define the map $f_n\colon X_n \to X$ as the composition of $\Phi_n\colon X_n \to \widetilde{X}_n$ and the inclusion map of $\widetilde{X}_n$ in $X$. We claim that $f_n$ is $((2L+3)\varepsilon_n)$-isometric, where $L$ is the constant in \Cref{thm:triangular_surface}. First note that $f_n(X_n)$ is $\varepsilon_n$-dense in $X$, since $\Phi_n$ is surjective and the inclusion map from $\widetilde X_n$ to $X$ is a $\varepsilon_n$-isometry. Next, let $x,y \in X_n$. Then, by \Cref{thm:triangular_surface} \ref{item:filling_1} and the fact that $\mesh(\widetilde{\mathcal T}_n) <\varepsilon_n$, there exist $x', y' \in \mathcal{E}_n$ such that $d_n(x',x) < L\varepsilon_n$, $d_n(y',y) < L\varepsilon_n$,  $d(f_n(x'),f_n(x)) < \varepsilon_n$, and $d(f_n(y'),f_n(y)) < \varepsilon_n$. These properties and \eqref{equ:isometry_estimate} imply that
\begin{align*}
    d(f_n(x),f_n(y)) & < d(f_n(x'),f_n(y')) + 2\varepsilon_n \\
    & \leq d_n(x',y') + 2\varepsilon_n < d_n(x,y) + (2L+2)\varepsilon_n.
\end{align*}
In the other direction, we have
\begin{align*}
    d_n(x,y) & < d_n(x',y') + 2L\varepsilon_n \\ 
    &< d(f_n(x'),f_n(y')) + (2L+1)\varepsilon_n \\
    & < d(f_n(x),f_n(y)) + (2L+3)\varepsilon_n.
\end{align*}
This concludes the proof that $f_n$ is $((2L+3)\varepsilon_n)$-isometric.

Next, we verify the property \ref{item:main_2} regarding the Hausdorff 2-measure. Let $A \subset X$ be a {compact} set and fix $\delta>0$. Let $n$ be sufficiently large so that $\diam(T)<\delta$ for every triangular disk $T\in \widetilde{\mathcal T}_n$. Then $T\subset N_{\delta}(A)$ whenever $T\cap A\neq \emptyset$. The set $f_n^{-1}(A)$ (which could be empty) is covered by the sets $f_n^{-1}(T)$ for which $T\cap A\neq \emptyset$. Moreover, by \Cref{thm:triangular_surface} \ref{item:filling_2} we have $\mathcal H^2(f_n^{-1}(T)) \leq L \mathcal H^2( T)$ for each $T\in \widetilde{\mathcal T}_n$. Since the boundary of each triangle $T$ has Hausdorff $2$-measure zero, we have $$\mathcal H^2(f_n^{-1}(A)) \leq L\mathcal H^2(N_\delta(A)).$$
Hence, letting $n\to\infty$ and then $\delta\to 0$ gives
$$\limsup_{n\to\infty}\mathcal  H^2(f_n^{-1}(A)) \leq L\mathcal H^2(A),$$
as desired.\qed

\bigskip

\subsection{The case where $X$ is homeomorphic to $\C$}
\label{sec:planar_case}
To prepare for the proof of \Cref{thm:one-sided_qc}, we refine the work in this section for the case where $X$ is homeomorphic to $\mathbb{C}$. Note that $X$ has no boundary as a manifold. Choose a decreasing sequence $(\varepsilon_n)_{n=1}^\infty$ of positive real numbers such that $\varepsilon_n \to 0$ as $n \to \infty$. 

First, we describe the existence of a sequence of nested triangulations of $X$. By \Cref{thm:triangulation} and \Cref{prop:good_triangulation}, there exists a triangulation $\mathcal T_1$ of $X$ such that the inclusion from the edge graph $\mathcal E(\mathcal T_1)$ to $X$ is a $\varepsilon_1$-isometry. Now, to each triangular disk $T\in \mathcal T_1$ we apply \Cref{thm:triangulation} to obtain a triangulation of $T$ with mesh less than $\varepsilon_2/8$. Note that the union of these triangulations gives a triangulation of $X$ also with mesh less than $\varepsilon_2/8$. By \Cref{prop:good_triangulation} we may refine this triangulation to obtain a triangulation $\mathcal T_2$ of $X$ such that the inclusion from $\mathcal E(\mathcal T_2)$ to $X$ is a $\varepsilon_2$-isometry. By construction, $\mathcal T_2$ is a refinement of $\mathcal T_1$. We proceed in the same way to obtain triangulations $\mathcal T_n$ such that $\mathcal T_{n+1}$ is a refinement of $\mathcal T_n$ and the inclusion from $\mathcal E(\mathcal T_{n})$ to $X$ is a $\varepsilon_n$-isometry for each $n\in \N$. 

Moreover, according to the last part of \Cref{prop:good_triangulation}, for each connected union $D$ of triangular disks $T\in {\mathcal T_n}$ the inclusion map from $\mathcal E(\mathcal T_{n+1}|D)$ to $D$, endowed with the length metric induced by $X$, is a $\varepsilon_{n+1}$-isometry. Note that for each $k\in \N$, if $D$ is a connected union of triangular disks of $\mathcal T_k$, then $D$ is also a connected union of triangular disks of $\mathcal T_n$ for $n\geq k$. Thus, the inclusion map from $\mathcal E(\mathcal T_{n}|D)$ to $D$ is a $\varepsilon_n$-isometry for every $n\geq k+1$.

Since $X$ is homeomorphic to $\C$, for each $k\in \N$ there exists a closed topological disk $\br{D_k}$ that is the union of triangular disks of $\mathcal T_k$ and such that $\br{D_k} \subset \br{D_{k+1}}$ and $X= \bigcup_{k=1}^\infty \br{D_k}$. We consider $\overline{D_k}$ as being equipped with the length metric induced by $d$, denoted by $d_k$. For each $k,n \in \mathbb{N}$ such that $n \geq k$, let $\mathcal{T}_k^n$ denote the subset of $\mathcal{T}_n$ consisting of triangular disks contained in $\overline{D_k}$. By the above, for $n\geq k+1$ the inclusion map from $\mathcal E (\mathcal T_{k}^n)$ to $\br{D_k}$, considered with the metric $d_k$, is a $\varepsilon_{n}$-isometry. 

Consider the polyhedral surfaces $X_n$ and the maps $f_n\colon X_n\to X$ as constructed in Section \ref{sec:1.1_proof}, corresponding to the triangulations $\mathcal T_n$. Since $X$ has no boundary, the maps $f_n$ as constructed are homeomorphisms (rather than topological embeddings). For each $k,n \in \mathbb{N}$ satisfying $n \geq k$, let $\overline{D_k^n} = f_n^{-1}(\overline{D_k})$. The metric on $X_n$ induces a length metric on $\overline{D_k^n}$, which we denote by $d_k^n$. Because the triangulations are nested, the space $\overline{D_k^n}$ is a polyhedral surface for all $n \geq k$. 

We claim that the conclusions of \Cref{thm:main} are also valid for each fixed $k\in \N$ and for the sequence $f_n \colon ( \overline{D_k^n}, d_k^n)\to  (\overline{D_k}, d_k)$, $n\geq k$. 

\begin{lemm}\label{lemma:convergence_D}
For each $k\in \N$, the sequence of homeomorphisms
$$f_n\colon ( \overline{D_k^n}, d_k^n)\to  (\overline{D_k}, d_k),\quad n\geq k,$$
is approximately isometric. Moreover, for each compact set $A\subset \overline{D_k}$ we have
$$\limsup_{n\to\infty}\mathcal H^2 (f_n^{-1}(A)) \leq K\mathcal H^2(A).$$
\end{lemm}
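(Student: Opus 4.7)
The plan is to adapt the proof of \Cref{thm:main} from \Cref{sec:1.1_proof} to the localized setting of $\overline{D_k}$. Two observations from the construction in \Cref{sec:planar_case} are central. First, by the final sentence of \Cref{prop:good_triangulation} applied with $D = \overline{D_k}$, the inclusion map from the edge graph $\mathcal{E}(\mathcal{T}_k^n)$ into $(\overline{D_k}, d_k)$ is a $\varepsilon_n$-isometry whenever $n \geq k+1$. Second, because the triangulations are nested, $\overline{D_k^n} = f_n^{-1}(\overline{D_k})$ is precisely the gluing of the polyhedral fillings of the triangles $T \in \mathcal{T}_k^n$ along $\mathcal{E}(\mathcal{T}_k^n)$; hence $(\overline{D_k^n}, d_k^n)$ is exactly the surface that the construction of \Cref{sec:1.1_proof} would produce if applied to $(\overline{D_k}, d_k)$ equipped with the triangulation $\mathcal{T}_k^n$.

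With this identification in place, I would mirror the estimates of \Cref{sec:1.1_proof}. The key step is to establish the localized analogue of \eqref{equ:isometry_estimate}: for all points $x, y$ in the edge graph of $\overline{D_k^n}$,
\[ d_k(f_n(x), f_n(y)) \leq d_k^n(x, y) \leq \widetilde{d}_{k,n}(f_n(x), f_n(y)) < d_k(f_n(x), f_n(y)) + \varepsilon_n, \]
where $\widetilde{d}_{k,n}$ denotes the length metric on $\mathcal{E}(\mathcal{T}_k^n)$ induced by $d_k$. The right-hand inequality is immediate because $d_k^n$ is a length metric and any path in $\mathcal{E}(\mathcal{T}_k^n)$ lifts through $f_n^{-1}$ to a path in $\overline{D_k^n}$ of the same length. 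The left-hand inequality uses \Cref{thm:triangular_surface} \ref{item:filling_4}, applied to each component of the intersection of a given path in $\overline{D_k^n}$ with the interior of a constituent filling, exactly as in the derivation of \eqref{equ:isometry_estimate}. Once this is in hand, the concluding argument of \Cref{sec:1.1_proof}—approximating arbitrary points of $\overline{D_k^n}$ by edge-graph points via \Cref{thm:triangular_surface} \ref{item:filling_1} and the mesh bound $\mesh(\mathcal{T}_n) < \varepsilon_n$—yields that $f_n$ is $((2L+3)\varepsilon_n)$-isometric for all $n \geq k+1$.

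For the Hausdorff measure inequality, the argument from the end of \Cref{sec:1.1_proof} transfers almost verbatim. Given a compact set $A \subset \overline{D_k}$ and $\delta > 0$, for large $n$ every triangle $T \in \mathcal{T}_k^n$ intersecting $A$ lies in $N_\delta(A)$, so $f_n^{-1}(A)$ is covered by the corresponding fillings $f_n^{-1}(T)$. Summing the bound $\mathcal{H}^2(f_n^{-1}(T)) \leq L\mathcal{H}^2(T)$ from \Cref{thm:triangular_surface} \ref{item:filling_2} and letting first $n \to \infty$ and then $\delta \to 0$ yield the desired limsup bound with $K = L$. A minor point to record is that the Hausdorff $2$-measure on $\overline{D_k}$ is the same whether computed with respect to $d_k$ or to the ambient $d$; this is because $d_k$ and $d$ coincide on sufficiently small balls around each point of $\overline{D_k}$, each triangular disk of $\mathcal{T}_k$ being convex in $X$.

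I do not expect a substantially new obstacle: the lemma is essentially a localized restatement of \Cref{thm:main}, asserting that the construction respects the compact exhaustion $\{\overline{D_k}\}_{k \in \mathbb{N}}$. The delicate bookkeeping is to verify that the strengthened $\varepsilon_n$-isometry of \Cref{prop:good_triangulation} really applies to the \emph{intrinsic} length metric $d_k$ on $\overline{D_k}$ rather than to the ambient restriction of $d$—which is precisely why the last sentence of \Cref{prop:good_triangulation} and the remarks at the end of \Cref{sec:planar_case} were stated in that strengthened form—and that the polyhedral surface $\overline{D_k^n}$ inherits its length metric $d_k^n$ coherently from the global metric on $X_n$.
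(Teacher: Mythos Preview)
Your proposal is correct and follows essentially the same approach as the paper's own justification: identify $\overline{D_k^n}$ as the polyhedral surface built from the fillings of the triangles in $\mathcal{T}_k^n$, invoke the strengthened form of \Cref{prop:good_triangulation} for the intrinsic metric $d_k$, and then rerun the argument of \Cref{sec:1.1_proof} verbatim with $\mathcal{E}(\mathcal{T}_k^n)$ playing the role of $\widetilde{\mathcal{E}}_n$. The paper in fact omits the details you have supplied, so your write-up is if anything more complete; the only point it states that you merely sketch is the coincidence of Hausdorff $2$-measures under $d$ and $d_k$, which your convexity remark handles correctly.
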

Here the Hausdorff $2$-measures refer to the metrics of $d_{k}^n$ and $d_k$, respectively, but one can use instead the measures with respect to the metrics of $X_n$ and $X$.

We now justify the lemma. For each $n\geq k$, let $\mathcal{S}_k^n = \{f_n^{-1}(T): T \in \mathcal{T}_k^n\}$. Then $\mathcal{S}_k^n$ covers $\overline{D_k^n}$ and consists of those triangular disks used to construct $X_n$ that are contained in $\overline{D_k^n}$. From here, we follow the same argument as in the second part of \Cref{sec:1.1_proof}, with $\mathcal{E}(\mathcal{S}_k^n)$ and $\mathcal{E}(\mathcal{T}_k^n)$ taking the roles of $\mathcal{E}_n$ and $\widetilde{\mathcal{E}}_n$ in \Cref{sec:1.1_proof}, respectively, to conclude that $f_n|_{\overline{D_k^n}}$ is a $((2L+3)\varepsilon_n)$-isometry from $\overline{D_k^n}$ to $\overline{D_k}$, for $n\geq k+1$. The details are omitted. 

Finally, since the Hausdorff 2-measures on $\overline{D_k}$ with respect to $d$ and $d_k$ coincide, and similarly for $\overline{D_k^n}$, we see directly from the argument in Section \ref{sec:1.1_proof} that the conclusion regarding the Hausdorff 2-measure is also satisfied.

\bigskip

\section{Uniformization of surfaces} \label{sec:uniformization}

In this section, we prove {our main result on the uniformization problem,} Theorem \ref{thm:one-sided_qc}, {as well as} Corollary \ref{cor:plateau} and Corollary \ref{cor:bonk-kleiner}. {In \Cref{section:compact}, we prove the compact case of \Cref{thm:one-sided_qc}, i.e., where $X$ is homeomorphic to $\widehat{\C}$ or $\overline{\mathbb D}$, along with \Cref{cor:plateau}.}  {In \Cref{sec:qs}, we use a standard argument to derive the Bonk--Kleiner theorem (\Cref{cor:bonk-kleiner}) from \Cref{thm:one-sided_qc}.} Finally, in Section \ref{sec:noncompact}, we complete the proof of Theorem \ref{thm:one-sided_qc} in the non-compact case {where $X$ is homeomorphic to $\mathbb{C}$}. 

We start with some definitions that are used throughout this section. Recall that a continuous map $\nu\colon X\to Y$ between topological spaces is {monotone} if the preimage of every {point} under $\nu$ is connected. 

Let $X,Y$ be metric spaces of locally finite Hausdorff $2$-measure. A mapping $h\colon X\to Y$ is \textit{quasiconformal} (according to the geometric definition) if $h$ is a homeomorphism and there exists $K\geq 1$ such that for all curve families $\Gamma$ in $X$ we have
$$K^{-1}\Mod \Gamma\leq \Mod h(\Gamma) \leq K\Mod \Gamma.$$
In this case, we say that $h$ is $K$-quasiconformal.  A mapping $h\colon X\to Y$ is \textit{weakly quasiconformal} if $h$ is continuous, surjective, and monotone and there exists $K\geq 1$ such that for every curve family $\Gamma$ in $X$ we have
$$\Mod \Gamma\leq K \Mod h(\Gamma).$$
In this case, we say that $h$ is weakly $K$-quasiconformal. Recall that if $h\colon X \to Y$ is continuous and $\Gamma$ is a curve family in $X$, then $h(\Gamma)$ denotes the curve family $\{h\circ \gamma :\gamma\in \Gamma\}$.

\bigskip

\subsection{Compact metric surfaces}\label{section:compact}
{Here, we give the proof of} Theorem \ref{thm:one-sided_qc} in the case where {$X$ is homeomorphic to $\widehat{\C}$ or $\Omega=\overline{\mathbb D}$}. This theorem follows readily from the following auxiliary result.

\begin{thm}\label{theorem:uniformization_long}
Let $\Omega=\widehat\C$ or $\Omega=\overline{\mathbb D}$. Suppose that $X$ is a length surface homeomorphic to $\Omega$ with $\mathcal H^2(X)<\infty$ and $\{X_n\}_{n=1}^\infty$ is a sequence of polyhedral Riemann surfaces homeomorphic to $\Omega$ converging in the Gromov--Hausdorff sense to $X$. Let $f_n\colon X_n\to X$ be an approximately isometric sequence such that there exists $K\geq 1$ with 
$$\limsup_{n\to\infty}\mathcal H^2 (f_n^{-1}(A)) \leq K\mathcal H^2(A)$$
for all compact sets $A\subset X$. If $h_n\colon \Omega\to X_n$, $n\in \N$, is a normalized sequence of conformal parametrizations, then $f_n\circ h_n$ has a subsequence that converges uniformly to a weakly $K$-quasiconformal map $h\colon \Omega \to X$ with $h\in N^{1,2}(\Omega,X)$. 
\end{thm}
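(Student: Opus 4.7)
The plan is to show that $\{f_n \circ h_n\}$ is equicontinuous, extract a uniformly convergent subsequence with continuous limit $h$, and then use the conformality of each $h_n$ together with the measure control on $f_n$ to deduce that $h$ satisfies the weak $K$-quasiconformality inequality.

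Equicontinuity of $\{f_n \circ h_n\}$ would be established by a standard modulus-contradiction argument. Since $h_n$ is conformal, Lemma \ref{lemma:polyhedral_conformal} gives $\Mod \Gamma = \Mod h_n(\Gamma)$ for every curve family $\Gamma$ in $\Omega$. If equicontinuity failed, one could find $p_k, q_k \in \Omega$ with $|p_k - q_k| \to 0$ while $d_X(f_{n_k}(h_{n_k}(p_k)), f_{n_k}(h_{n_k}(q_k))) \geq \delta$ for some $\delta > 0$. The classical round-annulus estimate produces arbitrarily large modulus for families in $\Omega$ separating $\{p_k, q_k\}$ from a ``distant'' continuum fixed using the normalization of $h_n$. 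Transferring through $h_{n_k}$ and invoking Lemma \ref{lemma:modulus_bound_convergence} on the image continua (whose diameters are bounded below thanks to the $\delta$-gap and the approximate isometry $f_{n_k}$) gives a contradiction. Arzel\`a--Ascoli then yields a subsequence converging uniformly to $h \colon \Omega \to X$. Surjectivity of $h$ follows from the $\varepsilon_n$-density of $f_n(X_n)$ in $X$, and monotonicity follows from Youngs' theorem applied to the homeomorphisms $f_n \circ h_n$.

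For the analytic part, let $g_n := |Dh_n|$ denote the conformal upper gradient from Lemma \ref{lemma:polyhedral_conformal}, so that $\int_\Omega g_n^2 \, d\mathcal H^2 = \mathcal H^2(X_n)$. The measure hypothesis applied to $A = X$ yields $\limsup_n \|g_n\|_{L^2(\Omega)}^2 \leq K \mathcal H^2(X) < \infty$. Pass to a weakly $L^2$-convergent subsequence $g_n \rightharpoonup g$, and invoke Mazur's lemma to produce convex combinations $\widetilde g_n = \sum_{k \geq n} \lambda_{n,k} g_k$ converging strongly to $g$ in $L^2(\Omega)$. By Fuglede's lemma, outside an exceptional family of modulus zero, $\int_\gamma \widetilde g_n \, ds \to \int_\gamma g \, ds$ for every rectifiable $\gamma$. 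For such a $\gamma$ joining $a, b \in \Omega$, the upper gradient inequality for $h_n$, the approximate isometry estimate $|d_{X_n}(h_n(a), h_n(b)) - d_X((f_n \circ h_n)(a), (f_n \circ h_n)(b))| < \varepsilon_n$, and the uniform convergence $f_n \circ h_n \to h$ combine to give $d_X(h(a),h(b)) \leq \int_\gamma g \, ds$. Hence $g$ is a weak upper gradient of $h$, and since $X$ is bounded, $h \in N^{1,2}(\Omega, X)$.

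Finally, to promote this to the modulus inequality I would verify the hypothesis $\int_{h^{-1}(E)} g^2 \, d\mathcal H^2 \leq K \mathcal H^2(E)$ of Theorem \ref{theorem:qc_definitions_williams}. For compact $A' \subset h^{-1}(E)$ and $\varepsilon > 0$, uniform convergence gives $A' \subset h_n^{-1}(f_n^{-1}(N_\varepsilon(h(A'))))$ for all large $n$; the change-of-variable formula in Lemma \ref{lemma:polyhedral_conformal} and the measure hypothesis then yield
\[
\int_{A'} g_n^2 \, d\mathcal H^2 \leq \mathcal H^2(f_n^{-1}(N_\varepsilon(h(A')))) \leq K \mathcal H^2(N_\varepsilon(h(A'))) + o(1).
\]
Combining strong $L^2$ convergence of $\widetilde g_n$ on $A'$ with convexity and outer regularity as $\varepsilon \to 0$ gives $\int_{A'} g^2 \, d\mathcal H^2 \leq K \mathcal H^2(E)$, and inner regularity upgrades this to all of $h^{-1}(E)$. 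Theorem \ref{theorem:qc_definitions_williams} then promotes the estimate to $\Mod \Gamma \leq K \Mod h(\Gamma)$. The main obstacle is the third paragraph: coupling weak $L^2$ convergence of $g_n$ with Fuglede's lemma to pass the pointwise upper gradient inequality through the limit. This is delicate precisely because $f_n$ is only approximately isometric (not Lipschitz, and possibly not even continuous), so one cannot directly push $g_n$ forward to an upper gradient of $f_n \circ h_n$; the analysis must remain on the $X_n$ side throughout and import the geometry of $X$ only through the approximate isometry bound $|d_{X_n} - d_X \circ (f_n \times f_n)| < \varepsilon_n$.
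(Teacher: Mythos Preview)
Your outline matches the paper's proof in all essentials: the equicontinuity argument via the modulus estimate of Lemma~\ref{lemma:modulus_bound_convergence}, the Banach--Alaoglu/Mazur/Fuglede package to produce a weak upper gradient of the limit, and the verification of the hypothesis of Theorem~\ref{theorem:qc_definitions_williams} by approximating $h(A')$ from outside and invoking the measure bound on $f_n^{-1}$. One cosmetic point in the last step: the hypothesis $\limsup_n \mathcal H^2(f_n^{-1}(A)) \leq K\mathcal H^2(A)$ is only assumed for \emph{compact} $A$, so you should use $\overline{N_\varepsilon(h(A'))}$ rather than the open neighborhood; and to pass from $\int_{A'} g_n^2$ to $\int_{A'} g^2$ the paper simply uses weak lower semicontinuity of the $L^2$-norm, which is cleaner than your route through the convex combinations.

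There is, however, a genuine gap in your treatment of monotonicity. You write that it ``follows from Youngs' theorem applied to the homeomorphisms $f_n \circ h_n$'', but as you yourself observe two paragraphs later, $f_n$ is merely an $\varepsilon_n$-isometry and need not even be continuous; moreover $f_n(X_n)$ is only $\varepsilon_n$-dense in $X$, so even when $f_n$ happens to be a topological embedding (as in the disk case of Theorem~\ref{thm:main}) the composition $f_n\circ h_n$ is not a homeomorphism onto $X$. Youngs' theorem therefore does not apply. The paper handles this directly (Lemma~\ref{lemma:convergence_to_monotone}): assuming $h^{-1}(x)$ were disconnected, one separates two of its components by a simple curve $\gamma\subset\Omega\setminus h^{-1}(x)$, so that $h_n\circ\gamma$ separates $h_n(a)$ from $h_n(b)$ in $X_n$; then Proposition~\ref{prop:gh}\,\ref{prop:lift} lifts the constant path $x$ to paths $\gamma_n$ in $X_n$ joining $h_n(a)$ to $h_n(b)$, forcing $h_n\circ\gamma$ and $\gamma_n$ to intersect, which in the limit contradicts $x\notin h(|\gamma|)$. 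This argument uses only that each $X_n$ is a length space and that $f_n$ is approximately isometric, never continuity of $f_n$.
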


Here we say that a sequence $h_n\colon X_n\to Y_n$ of homeomorphisms between {compact} metric spaces is \textit{normalized} if there exists a value $\delta>0$ and a sequence of triples $a_n,b_n,c_n\in X_n$ with mutual distances bounded from below by $\delta$ such that the mutual distances between the points $h_n(a_n),h_n(b_n),h_n(c_n)$ are also bounded from below by $\delta$, where $\delta$ is independent of $n\in \N$.

In fact, Theorem \ref{theorem:qc_definitions_williams} implies that the conclusion that $h\in N^{1,2}(\Omega,X)$ is redundant and follows from the weak quasiconformality. Moreover, one may obtain the conclusions of Theorem \ref{theorem:uniformization_long} (with a different constant) under the more general assumptions that $X_n$ are length spaces, rather than polyhedral surfaces, and the mappings $h_n$ are $K'$-quasiconformal for some uniform $K'\geq 1$, rather than conformal. However, we do not need this generality for the proof of Theorem \ref{thm:one-sided_qc}.

Assuming Theorem \ref{theorem:uniformization_long}, we prove Theorem \ref{thm:one-sided_qc} in the compact case.

\begin{proof}[Proof of Theorem \ref{thm:one-sided_qc} for compact $X$]
Let $\Omega=\widehat{\C}$ or $\Omega=\overline{\mathbb D}$. Suppose that $X$ is homeomorphic to $\Omega$ with $\mathcal H^2(X)<\infty$. By Theorem \ref{thm:main}, there exists a sequence $X_n$ of polyhedral surfaces homeomorphic to $\Omega$ that converges in the Gromov--Hausdorff sense to $X$. Moreover, there exists an approximately isometric sequence $f_n\colon X_n\to X$ of topological embeddings such that for some $K\geq 1$ we have
$$\limsup_{n\to\infty}\mathcal H^2 (f_n^{-1}(A)) \leq K\mathcal H^2(A)$$
for all compact sets $A\subset X$. We endow each $X_n$ with the natural complex structure, as in Section \ref{sec:complex}. By the uniformization {theorem for polyhedral surfaces (Theorem \ref{theorem:uniformization})}, there exists {a sequence of} conformal parametrizations $h_n\colon \Omega \to X_n$. In order to apply Theorem \ref{theorem:uniformization_long}, it only remains to normalize the sequence $h_n$.

Suppose that $\Omega=\widehat\C$. Since $X_n$ converges in the Gromov--Hausdorff sense to $X$,  there exists a sequence of triples $a_n',b_n',c_n'\in X_n$ with mutual distances uniformly bounded away from $0$. By precomposing $h_n$ with a M\"obius transformation of $\widehat{\C}$, we may assume that the preimages of these points {under $h_n$} also have the same property. If $\Omega= \overline{\mathbb D}$, then, by  Lemma \ref{lemma:boundary_convergence}, $\diam(\partial X_n)$ is uniformly bounded below away from $0$. Hence, we may find points $a_n',b_n',c_n'\in \partial X_n$ with mutual distances uniformly bounded below. We now precompose $h_n$ with a M\"obius transformation of $\overline {\mathbb D}$, so that the preimages of $a_n',b_n',c_n'$ are the points $1,i,-1\in \partial \mathbb D$.
\end{proof}

{Next, we derive \Cref{cor:plateau} from \Cref{thm:one-sided_qc} and the following lemma.}

\begin{lemm}\label{lemma:monotone_boundary}
Let ${X}$ and ${Y}$ be compact $2$-manifolds with boundary that are homeomorphic and $h\colon  X\to  Y$ be a continuous, surjective, and monotone mapping. Then {$\Int Y \subset h(\Int X)$}, {$\partial Y = h(\partial X)$}, and $h|_{\partial X}\colon \partial X\to \partial Y$ is monotone.  
\end{lemm}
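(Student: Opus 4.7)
My plan is to invoke the theorem of Youngs cited in the introduction: any continuous monotone surjection between two homeomorphic compact $2$-manifolds is the uniform limit of homeomorphisms. Write $h_n\colon X\to Y$ for such an approximating sequence with $h_n \to h$ uniformly. By invariance of boundary, each $h_n$ maps $\partial X$ homeomorphically onto $\partial Y$. Since $\partial Y$ is closed in $Y$ and $h_n(x)\to h(x)$ for every $x\in X$, this gives $h(\partial X)\subset \partial Y$.

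From the inclusion $h(\partial X)\subset \partial Y$ I deduce the equivalent containment $h^{-1}(\Int(Y))\subset \Int(X)$, and together with surjectivity of $h$ this yields $\Int(Y)\subset h(\Int(X))$. For the reverse boundary containment $\partial Y \subset h(\partial X)$, I fix $y\in \partial Y$, set $x_n = h_n^{-1}(y)\in \partial X$, and pass to a convergent subsequence $x_n \to x_\infty\in \partial X$ by compactness of $\partial X$. Uniform convergence of $h_n$ to $h$ combined with continuity of $h$ then gives $h(x_\infty)=\lim h_n(x_n)=y$.

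The remaining step is the monotonicity of $h|_{\partial X}$. The compact $1$-manifolds $\partial X$ and $\partial Y$ are disjoint unions of finitely many circles; for each component $C\subset \partial X$, uniform closeness of $h_n$ to $h$ forces the image circles $h_n(C)\subset \partial Y$ to coincide with a single component $C'$ for all sufficiently large $n$. The main obstacle is the orientation bookkeeping: two orientation-opposite homeomorphisms of a fixed circle onto $C'$ cannot be arbitrarily uniformly close, their uniform distance being bounded below by a positive constant depending only on $C'$, so the Cauchy sequence $(h_n|_C)$ must have constant orientation for all large $n$. Once this is settled, lifting to universal covers $\mathbb R \to \mathbb R$ turns the maps $h_n|_C$ into strictly monotone functions converging uniformly on a fundamental domain; the corresponding lift of $h|_C$ is therefore weakly monotone, so the preimages of points under $h|_C$ are connected. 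Since each $y\in \partial Y$ lies in a unique component $C'$, the set $(h|_{\partial X})^{-1}(y)$ coincides with $(h|_C)^{-1}(y)$, which is connected.
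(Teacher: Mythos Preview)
Your approach is essentially the paper's: both invoke Youngs' theorem to write $h$ as a uniform limit of homeomorphisms $h_n$, and both deduce $h(\partial X)=\partial Y$ and $\Int Y\subset h(\Int X)$ from this. The paper then asserts that the monotonicity of $h|_{\partial X}$ is ``immediate from the monotonicity of $h$,'' without further detail; your lifting argument actually supplies the missing justification for this step, so in that respect your write-up is more complete than the paper's.

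There is one small gap in your final paragraph. When you write that $(h|_{\partial X})^{-1}(y)$ coincides with $(h|_C)^{-1}(y)$, you are tacitly assuming that exactly one component $C$ of $\partial X$ maps into the component $C'$ of $\partial Y$ containing $y$. This does not follow from what you have written so far: you have only shown that each $C$ determines a well-defined $C'$, not that the assignment $C\mapsto C'$ is injective. The fix is immediate: each $h_n$ induces a bijection between the boundary components of $X$ and those of $Y$, and you have already argued that this bijection stabilizes for large $n$; hence the limiting assignment $C\mapsto C'$ is a bijection, and only one component of $\partial X$ meets $(h|_{\partial X})^{-1}(y)$. With this sentence added, your proof is complete.
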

\begin{proof}
This result follows from a theorem of Youngs \cite[p.~92]{You:48}, which asserts that $h$ is the uniform limit of homeomorphisms from $X$ onto $Y$. In particular, this theorem implies that {$\partial Y = h(\partial X)$}. The monotonicity of $h|_{\partial X}\colon \partial X\to \partial Y$ is immediate from the monotonicity of $h$. Since $h$ is surjective, we conclude that {$\Int Y \subset h(\Int X)$}. 
\end{proof}

\begin{proof}[Proof of Corollary \ref{cor:plateau}]
Consider the weakly quasiconformal map $h\colon \overline{\mathbb D} \to {X}$ given by Theorem \ref{thm:one-sided_qc}, which lies in
$N^{1,2}(\overline {\mathbb D},X)$. From Lemma \ref{lemma:monotone_boundary} we conclude that $ h|_{\partial \mathbb D}\colon \partial \mathbb D \to \partial X$ is a monotone parametrization of $\partial X$. 
\end{proof}

We now start the proof of Theorem \ref{theorem:uniformization_long}. The proof is split into two parts: the proof of uniform convergence and the proof of quasiconformality. 

In what follows, we assume, as in the statement of \Cref{theorem:uniformization_long}, that $\Omega=\widehat\C$ or $\Omega=\overline{\mathbb D}$. Moreover, $X$ is a length surface homeomorphic to $\Omega$ with $\mathcal H^2(X)<\infty$ and $\{X_n\}_{n=1}^\infty$ is a sequence of polyhedral surfaces homeomorphic to $\Omega$ converging in the Gromov--Hausdorff sense to $X$. Let $f_n\colon X_n\to X$ be a sequence of $\varepsilon_n$-isometries, where $\varepsilon_n\to 0$ as $n\to \infty$, such that there exists $K\geq 1$ with 
$$\limsup_{n\to\infty}\mathcal H^2 (f_n^{-1}(A)) \leq K\mathcal H^2(A)$$
for all compact sets $A\subset X$.

\subsubsection{Equicontinuity and existence of {the} limiting mapping}\label{section:equicontinuity}
We prove here that if $h_n\colon \Omega\to X_n$ is a normalized sequence of conformal homeomorphisms, then the sequence $f_n\circ h_n\colon \Omega \to X$ is uniformly equicontinuous.  Recall that {a mapping between Riemann surfaces is conformal if it is complex differentiable} in local coordinates; see Section \ref{sec:polyhedral_surfaces}. By Lemma \ref{lemma:polyhedral_conformal} \ref{corollary:polyhedral_conformal}, $h_n$ is also $1$-quasiconformal (according to the geometric definition). The proof of the equicontinuity relies on the fact that that $\limsup_{n\to\infty}\mathcal H^2(X_n)\leq K\mathcal H^2(X)$, which allows us to apply Lemma \ref{lemma:modulus_bound_convergence}.

\begin{lemm}[Equicontinuity]\label{lemma:equicontinuity}
The sequence $f_n\circ h_n\colon \Omega\to X$, $n\in \mathbb N$, is uniformly equicontinuous.
\end{lemm}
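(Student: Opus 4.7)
The approach is to argue by contradiction, combining the conformal invariance of $2$-modulus under $h_n$ (Lemma \ref{lemma:polyhedral_conformal}\,(iii)) with the uniform upper modulus bound from \Cref{lemma:modulus_bound_convergence}. Applying the hypothesis with $A = X$ (which is compact) yields $\limsup_n \mathcal{H}^2(X_n) \leq K\mathcal{H}^2(X) < \infty$, so \Cref{lemma:modulus_bound_convergence} is available for the sequence $\{X_n\}$.

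Suppose that the sequence $f_n \circ h_n$ fails to be uniformly equicontinuous. After passing to a subsequence, there exist $\delta > 0$ and sequences $x_n, y_n \in \Omega$ with $x_n, y_n \to z_0 \in \Omega$ and $d_X(f_n(h_n(x_n)), f_n(h_n(y_n))) \geq \delta$; the $\varepsilon_n$-isometry property of $f_n$ then gives $d_{X_n}(h_n(x_n), h_n(y_n)) \geq \delta/2$ for all large $n$. Next, invoke the normalization of $\{h_n\}$ to select three points $a_n, b_n, c_n \in \Omega$ with mutual $d_\Omega$- and $d_{X_n}$-distances at least some $\delta_0 > 0$. Passing to a further subsequence, $a_n \to a$, $b_n \to b$, $c_n \to c$ with $a,b,c$ pairwise distinct, so at most one of them equals $z_0$; say $a, b \neq z_0$, and put $p_n = a_n$ and $q_n = b_n$. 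Fix $r_0 \in (0, \delta_0)$ with $d_\Omega(p_n, z_0), d_\Omega(q_n, z_0) \geq r_0$ for all large $n$.

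Now construct disjoint continua $E_n, F_n \subset \Omega$ as follows: let $E_n$ be the trace of a geodesic in $\Omega$ from $x_n$ to $y_n$, so $\diam E_n = d_\Omega(x_n, y_n) \to 0$; and let $F_n$ be the trace of a path from $p_n$ to $q_n$ lying in $\Omega \setminus B(z_0, r_0/2)$, which exists because this set is path-connected in each of the allowed cases for $\Omega$. For large $n$, $E_n \subset B(z_0, r_0/2)$, so $E_n \cap F_n = \emptyset$, and $\diam F_n \geq d_\Omega(p_n, q_n) \geq \delta_0$. Since $h_n$ is a homeomorphism, $h_n(E_n)$ and $h_n(F_n)$ are disjoint continua in $X_n$ with diameters at least $\delta/2$ and $\delta_0$, respectively. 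Combining \Cref{lemma:modulus_bound_convergence} with the modulus preservation of $h_n$ from Lemma \ref{lemma:polyhedral_conformal}\,(iii) yields a uniform constant $M$ such that, for all large $n$,
\[
\Mod \Gamma^*(E_n, F_n; \Omega) \;=\; \Mod \Gamma^*(h_n(E_n), h_n(F_n); X_n) \;\leq\; M.
\]

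The contradiction will follow by showing $\Mod \Gamma^*(E_n, F_n; \Omega) \to \infty$. Setting $\epsilon_n = \sup_{z \in E_n} d_\Omega(z, z_0) \to 0$, for every $r \in (\epsilon_n, r_0/2)$ the level set $\gamma_r = \{z \in \Omega : d_\Omega(z, z_0) = r\}$ is a full circle (when $\Omega = \widehat{\mathbb{C}}$ or $z_0 \in \Int \mathbb{D}$) or a circular arc with endpoints on $\partial \mathbb{D}$ (if $\Omega = \overline{\mathbb{D}}$ with $z_0$ near $\partial \mathbb{D}$); in either case $\gamma_r$ is a curve in $\Omega \setminus (E_n \cup F_n)$ separating $E_n$ from $F_n$. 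A standard Cauchy--Schwarz argument in polar coordinates centered at $z_0$ (using that the angular length of $\gamma_r$ is at most $2\pi$) gives, for any admissible $\rho$,
\[
\int_\Omega \rho^2 \, d\mathcal{H}^2 \;\geq\; \frac{1}{2\pi} \log \frac{r_0/(2\epsilon_n)}{1} \;\longrightarrow\; \infty,
\]
contradicting the uniform bound $M$. The main obstacle is the geometric setup that makes both bounds on $\Mod \Gamma^*(E_n, F_n; \Omega)$ available simultaneously, in particular arranging $F_n$ to avoid a fixed neighborhood of $z_0$ so that the annular modulus blow-up is triggered; the boundary case $\Omega = \overline{\mathbb{D}}$ with $z_0 \in \partial \mathbb{D}$ only modifies constants and does not affect the divergence of the logarithm.
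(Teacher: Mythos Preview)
Your proposal is correct and follows essentially the same strategy as the paper: argue by contradiction, use the normalization to find a continuum $F_n$ staying away from the shrinking continuum $E_n$, invoke conformal invariance of modulus (Lemma~\ref{lemma:polyhedral_conformal}\,\ref{corollary:polyhedral_conformal}) together with \Cref{lemma:modulus_bound_convergence} for the upper bound, and an annular modulus estimate for the lower bound. The only cosmetic difference is that the paper formulates the contradiction in terms of continua $E_n$ with $\diam(E_n)\to 0$ and large image diameter, whereas you start from pairs of points $x_n,y_n\to z_0$ and then build $E_n$ as a geodesic; both setups lead to the same modulus comparison.
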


\begin{proof}
We claim that for each $\varepsilon>0$ there exists $\delta>0$ and $N\in \mathbb N$ such that if $E$ is a continuum in $\Omega$ with $\diam(E)<\delta$, then $\diam(f_n(h_n(E)))<\varepsilon$ for all $n\geq N$. Since $f_n$ is an $\varepsilon_n$-isometry with $\varepsilon_n\to 0$, it suffices instead to show that $\diam(h_n(E))<\varepsilon$ for all $n\geq N$.

We argue by contradiction. Suppose that there exists $\varepsilon_0>0$ such that for every $n\in \mathbb N$ there exists a continuum $E_n\subset \Omega$ with $\diam(E_n)<1/n$, but $\diam(h_{k_n}(E_n)) \geq \varepsilon_0$ for a subsequence $h_{k_n}$ of $h_n$. To simplify the notation, we write $h_n$ instead of $h_{k_n}$.

Since the sequence $h_n$ is normalized, there exist points $a_n,b_n,c_n\in \Omega$ with mutual distances bounded away from $0$, such that their images under $h_n$ also have mutual distances bounded away from $0$. 

Since $\diam(E_n)\to 0$, by passing to a subsequence we may assume that $E_n$ converges to a point in the Hausdorff sense. Then for each $n\in \N$ there exists a curve in $\Omega$ between a pair of the points $a_n,b_n,c_n$ that does not intersect $E_n$ and whose distance to $E_n$ is bounded below away from $0$, uniformly in $n$. Indeed $E_n$ can be very close to only one of the points $a_n,b_n,c_n$, so the other two can be joined by a curve that is away from $E_n$; this can be justified formally using the \textit{linear local connectivity} of $\Omega$, as defined in Section \ref{sec:qs}.  We define $F_n$ to be {the trace of} that curve. We note that 
$$\Mod \Gamma^*(E_n,F_n; \Omega) \to \infty,$$
since the relative distance between $E_n$ and $F_n$, defined by 
$$\Delta(E_n,F_n)= \frac{\dist(E_n,F_n)}{\min\{\diam(E_n),\diam(F_n)\}},$$
tends to infinity. In fact, since $E_n$ converges to a point and $\dist(E_n,F_n)$ is bounded away from $0$, for all sufficiently large $n\in \N$  one can find an annulus separating $E_n$ and $F_n$ with inner radius $r_n$, where $r_n\to 0$, and fixed outer radius $R>0$ so that  
$$\Mod \Gamma^*(E_n,{F_n}; \Omega)  \geq  c\log( R/r_n),$$
where $c>0$ is a uniform constant. See \cite[Section 7.9]{Hei:01} for similar estimates.

Consider the continua $h_n(E_n),h_n(F_n)\subset X_n$. By Lemma \ref{lemma:polyhedral_conformal} \ref{corollary:polyhedral_conformal}, the mappings $h_n$ are $1$-quasiconformal, so we have
$$\Mod \Gamma^*(h_n(E_n),h_n(F_n); X_n) \to \infty.$$
Note that there exists $\eta>0$ such that $\diam(h_n(F_n))>\eta$ for all $n\in \N$, since $h_n(F_n)$ joins a pair of the points $h(a_n),h(b_n),h(c_n)$. Moreover, by assumption, $\diam(h_n(E_n)) \geq \varepsilon_0$. Now, Lemma \ref{lemma:modulus_bound_convergence} with $\delta=\min\{\eta, \varepsilon_0\}$ implies that $$\limsup_{n\to\infty}\Mod \Gamma^*(h_n(E_n),h_n(F_n); X_n)<\infty.$$This is a contradiction.
\end{proof}

Next, we prove the convergence of {the sequence} $f_n\circ h_n$, $n \in \mathbb{N}$.

\begin{lemm}[Convergence]\label{lemma:convergence_to_monotone}
The sequence $f_n\circ h_n\colon \Omega\to X$, $n\in \N$, has a subsequence that converges uniformly to a continuous, surjective, and monotone mapping $h\colon \Omega\to X$.
\end{lemm}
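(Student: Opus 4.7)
The plan is to apply the Arzelà--Ascoli theorem to extract a uniformly convergent subsequence, and then to verify that the limit map is continuous, surjective, and monotone. By Lemma \ref{lemma:equicontinuity}, the sequence $\{f_n \circ h_n\}_{n=1}^\infty$ is uniformly equicontinuous; since both $\Omega$ and $X$ are compact, Arzelà--Ascoli provides a subsequence, still denoted by $f_n \circ h_n$, converging uniformly to a continuous mapping $h \colon \Omega \to X$.

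For surjectivity, fix $y \in X$. The $\varepsilon_n$-density of $f_n(X_n)$ in $X$ yields a point $z_n \in X_n$ with $d(f_n(z_n), y) < \varepsilon_n$, and we set $x_n = h_n^{-1}(z_n) \in \Omega$, which makes sense because $h_n$ is a homeomorphism. By compactness of $\Omega$, a further subsequence satisfies $x_{n_k} \to x_\infty \in \Omega$. The triangle inequality
\[
d(h(x_\infty), y) \le d(h(x_\infty), f_{n_k}(h_{n_k}(x_\infty))) + d(f_{n_k}(h_{n_k}(x_\infty)), f_{n_k}(h_{n_k}(x_{n_k}))) + d(f_{n_k}(z_{n_k}), y)
\]
then forces $h(x_\infty) = y$: the three summands on the right tend to zero by, respectively, uniform convergence, uniform equicontinuity of $\{f_n \circ h_n\}$ together with $x_{n_k} \to x_\infty$, and the $\varepsilon_{n_k}$-isometry property of $f_{n_k}$.

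For monotonicity we invoke the classical theorem of Youngs \cite{You:48} cited in the introduction, which states that a uniform limit of homeomorphisms between compact $2$-manifolds of the same topological type is monotone. By the construction of Section \ref{sec:1.1_proof}, each $f_n$ is a topological embedding with image $\widetilde{X}_n = f_n(X_n) \subset X$, so composing with the homeomorphism $h_n$ produces a homeomorphism of $\Omega$ onto $\widetilde{X}_n$. When $\Omega = \widehat{\C}$ one has $\widetilde X_n = X$ (there is no boundary to approximate), and so $f_n \circ h_n$ is already a homeomorphism $\Omega \to X$ to which Youngs applies directly. When $\Omega = \overline{\mathbb D}$, the set $\widetilde X_n$ is a closed Jordan subregion of $X$ whose boundary lies within $\varepsilon_n$ of $\partial X$; a standard collar-neighborhood argument produces a homeomorphism $g_n \colon \widetilde X_n \to X$ that is the identity outside a thin tubular neighborhood of $\partial \widetilde X_n$ and satisfies $\sup_{p \in \widetilde X_n} d(g_n(p), p) \to 0$. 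Then $g_n \circ f_n \circ h_n \colon \Omega \to X$ is a genuine sequence of homeomorphisms converging uniformly to the same limit $h$, and Youngs again yields monotonicity. The principal technical point is this disk-case collaring step, which is needed to absorb the small gap between $\widetilde X_n$ and $X$ without disturbing the uniform convergence; the other portions of the proof are essentially formal consequences of the equicontinuity supplied by Lemma \ref{lemma:equicontinuity}.
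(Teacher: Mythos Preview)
Your arguments for continuity and surjectivity match the paper's. For monotonicity you take a genuinely different route, invoking Youngs' theorem after perturbing $f_n\circ h_n$ to a sequence of honest homeomorphisms $\Omega\to X$. The paper instead argues directly: assuming $h^{-1}(x)$ is disconnected, one finds a simple curve $\gamma$ in $\Omega\setminus h^{-1}(x)$ separating two preimages $a,b$; then $h_n\circ\gamma$ separates $h_n(a)$ from $h_n(b)$ in $X_n$, while the path-lifting property of Gromov--Hausdorff convergence (Proposition~\ref{prop:gh}\,\ref{prop:lift}) produces paths $\gamma_n$ in $X_n$ from $h_n(a)$ to $h_n(b)$ with $f_n\circ\gamma_n$ collapsing to the point $x$. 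The forced intersection of $\gamma_n$ with $h_n\circ\gamma$ then yields $x\in h(|\gamma|)$, a contradiction.

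Your approach has a gap at the stated level of generality. The standing hypotheses here are those of Theorem~\ref{theorem:uniformization_long}, which assume only that $f_n$ is an $\varepsilon_n$-isometry---such maps need not even be continuous, let alone topological embeddings. You appeal to ``the construction of Section~\ref{sec:1.1_proof}'' to obtain that $f_n$ is an embedding, but that construction supplies one particular approximately isometric sequence, not a hypothesis of this lemma. The paper's argument sidesteps this entirely: it uses only that each $X_n$ is a length space (to invoke Proposition~\ref{prop:gh}\,\ref{prop:lift}) and never requires $f_n$ to be continuous. If you restrict to the specific $f_n$ coming from Theorem~\ref{thm:main}, your strategy becomes viable for the application, though the disk-case collaring step you sketch (producing $g_n\colon\widetilde X_n\to X$ with small displacement) still needs an actual argument---an annulus-type statement in this non-smooth metric setting---rather than the phrase ``standard collar-neighborhood argument.''
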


For the {conclusion regarding} monotonicity, we will use the fact that each space $X_n$ is a length space, which allows us to apply Proposition \ref{prop:gh} \ref{prop:lift}.

\begin{proof}
The proof of uniform convergence to a continuous map follows from the Arzel\`a--Ascoli theorem, applied to the uniformly equicontinuous sequence $f_n\circ h_n \colon \Omega\to X$. The surjectivity follows from the fact that the set  $f_n(h_n(\Omega))=f_n(X_n)$ is $\varepsilon_n$-dense in $X$, i.e., $d_X(f_n(X_n),x)<\varepsilon_n$ for all $x\in X$. Thus, the uniform convergence implies that $h(\Omega)=X$. 

{It remains to show that the mapping $h$ is monotone.} Suppose that for some $x\in X$ the set $h^{-1}(x)$ is a disconnected compact subset of $\Omega$. Consider points $a,b$ lying in distinct components of $h^{-1}(x)$. Then, by planar topology, there exists a simple curve $\gamma$ in $\Omega\setminus h^{-1}(x)$ separating the points $a$ and $b$; see \cite[Corollary 3.11, p.~35]{Wh:64}. Since each $h_n$ is a homeomorphism, $h_n\circ \gamma$ separates the points $h_n(a)$ and $h_n(b)$. The convergence of $f_n\circ h_n$ to $h$ implies that $f_n(h_n(a))$ and $f_n(h_n(b))$ converge to $x$, which we consider as a constant path. By Proposition \ref{prop:gh} \ref{prop:lift}, there exists a sequence of paths $\gamma_n\colon [0,1]\to X_n$ such that $\gamma_n(0)=h_n(a)$, $\gamma_n(1)=h_n(b)$, and $f_n\circ \gamma_n$ converges uniformly to the constant path $x$. Since $h_n\circ \gamma$ separates $h_n(a)$ and $h_n(b)$ and $\gamma_n$ joins the two points, we conclude that the two paths intersect. By the uniform convergence of $f_n\circ h_n\circ \gamma$ to $h\circ \gamma$ and of $f_n\circ \gamma_n$ to $x$, we conclude that $h\circ \gamma$ intersects $x$, a contradiction.
\end{proof}

\subsubsection{Regularity of {the} limiting parametrization}\label{sec:limiting_parametrization}
If $h_n\colon \Omega \to X_n$ is a normalized sequence of conformal parametrizations, then by Lemma \ref{lemma:convergence_to_monotone} the sequence $f_n\circ h_n$ has a subsequence that converges uniformly to a continuous, surjective, and monotone mapping $h\colon \Omega \to X$. By passing to a subsequence, we assume that $f_n\circ h_n$ converges to $h$.  We {now} complete the proof of Theorem \ref{theorem:uniformization_long} by proving that the limiting map $h$ is weakly quasiconformal.

Recall that  $f_n\colon X_n\to X$ is an $\varepsilon_n$-isometry, where $\varepsilon_n\to 0$, with the property that for every compact set $A\subset X$ we have
\begin{align}\label{ineq:measure_open}
    \limsup_{n\to\infty}\mathcal H^2(f_n^{-1}(A))\leq K\mathcal H^2(A)
\end{align}
for some uniform constant $K>0$. By Lemma \ref{lemma:polyhedral_conformal}, each mapping $h_n$ has an upper gradient $|Dh_n|$ with the property that 
$$\int_{E} |Dh_n|^2 \, d\mathcal H^2 = \mathcal H^2(h_n(E))$$
for each Borel set $E\subset \Omega$. We first prove that the upper gradients $|Dh_n|$ converge to a weak upper gradient of $h$.

\begin{lemm}[Upper gradient]\label{lemma:weak_upper_gradient}
The sequence of upper gradients $|Dh_n|$ of $h_n$, $n\in \N$, has a subsequence that converges weakly in $L^2(\Omega)$ to a function $g_h$ that is a weak upper gradient of $h$.
\end{lemm}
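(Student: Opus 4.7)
The plan is to combine a uniform $L^2$ bound on $|Dh_n|$ with Mazur's lemma and Fuglede's lemma, in the spirit of the standard closure result that weak $L^p$-limits of upper gradients (paired with uniformly convergent functions) are weak upper gradients.

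First I would establish a uniform $L^2$ bound. By Lemma \ref{lemma:polyhedral_conformal}\ref{lemma:polyhedral_conformal_change} applied with $E=\Omega$,
\begin{equation*}
\int_\Omega |Dh_n|^2\, d\mathcal{H}^2 = \mathcal{H}^2(h_n(\Omega)) = \mathcal{H}^2(X_n) = \mathcal{H}^2(f_n^{-1}(X)).
\end{equation*}
The hypothesis $\limsup_n \mathcal{H}^2(f_n^{-1}(X)) \leq K\mathcal{H}^2(X) < \infty$ then yields $\sup_n \||Dh_n|\|_{L^2(\Omega)} < \infty$. Weak compactness of closed balls in $L^2(\Omega)$ produces a subsequence, still denoted $|Dh_n|$, converging weakly in $L^2(\Omega)$ to some non-negative $g_h \in L^2(\Omega)$.

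Next I would apply Mazur's lemma to the weakly convergent sequence to obtain finite convex combinations $\widetilde g_j = \sum_{k \geq j}\lambda_{j,k} |Dh_k|$, with $\lambda_{j,k}\geq 0$ and $\sum_k \lambda_{j,k}=1$, such that $\widetilde g_j \to g_h$ strongly in $L^2(\Omega)$. Fuglede's lemma (see \cite[Proposition 6.3.2]{HKST:15}) then gives a further subsequence, still denoted $\widetilde g_j$, and an exceptional family $\Gamma_0$ with $\Mod \Gamma_0=0$ such that
\begin{equation*}
\int_\gamma \widetilde g_j\, ds \longrightarrow \int_\gamma g_h\, ds \quad \text{as } j\to\infty
\end{equation*}
for every locally rectifiable path $\gamma\notin\Gamma_0$.

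Finally, I would pass the upper gradient inequality to the limit. Fix $\gamma\notin\Gamma_0$ joining points $a,b\in\Omega$. Since $|Dh_k|$ is an upper gradient of $h_k$ by Lemma \ref{lemma:polyhedral_conformal}\ref{lemma:polyhedral_conformal_upper}, and $f_k$ is an $\varepsilon_k$-isometry,
\begin{equation*}
d_X(f_k(h_k(a)),f_k(h_k(b))) \leq d_{X_k}(h_k(a),h_k(b)) + \varepsilon_k \leq \int_\gamma |Dh_k|\, ds + \varepsilon_k.
\end{equation*}
Taking the convex combination in $k$ with weights $\lambda_{j,k}$ gives
\begin{equation*}
\sum_{k\geq j}\lambda_{j,k}\, d_X(f_k(h_k(a)),f_k(h_k(b))) \leq \int_\gamma \widetilde g_j\, ds + \sup_{k\geq j}\varepsilon_k.
\end{equation*}
The uniform convergence $f_k\circ h_k\to h$ implies $d_X(f_k(h_k(a)),f_k(h_k(b)))\to d_X(h(a),h(b))$, so the convex combination on the left converges to $d_X(h(a),h(b))$ as $j\to\infty$, while the right-hand side converges to $\int_\gamma g_h\, ds$ by Fuglede. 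Hence
\begin{equation*}
d_X(h(a),h(b)) \leq \int_\gamma g_h\, ds
\end{equation*}
for every $\gamma\notin\Gamma_0$, which is precisely the statement that $g_h$ is a weak upper gradient of $h$.

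The main subtlety is that the upper gradient inequality for $h_n$ lives in the varying metric $d_{X_n}$, not in $d_X$; the approximately isometric sequence $f_n$ moves the inequality into a common target with a negligible additive error, and Mazur's lemma is what allows the weak $L^2$-limit $g_h$ to enter the right-hand side via strong convergence of convex combinations, as required by Fuglede.
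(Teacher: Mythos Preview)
Your proof is correct and follows essentially the same approach as the paper: a uniform $L^2$ bound from the change-of-variables formula, weak compactness, Mazur's lemma to upgrade to strong convergence of convex combinations, Fuglede's lemma to pass path integrals to the limit, and the $\varepsilon_n$-isometry property of $f_n$ to transfer the upper gradient inequality from $d_{X_n}$ to $d_X$. The only cosmetic difference is that the paper records the limit $\lim_n d_{X_n}(h_n(a),h_n(b))=d_X(h(a),h(b))$ directly rather than carrying the additive $\varepsilon_k$ through the convex combination, but the content is identical.
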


The argument is classical for mappings between fixed spaces. See, for example, \cite[Theorem 7.3.9, p.~194]{HKST:15}. Since here we also have the additional complication of Gromov--Hausdorff convergence of spaces, we include the proof. 

\begin{proof}
For each $n\in \N$ and for all locally rectifiable paths $\gamma$ in $\Omega$ connecting  points $a,b$ we have
\begin{align}\label{lemma:weak_upper_gradient_seq}
    d_{X_n}(h_n(a),h_n(b)) \leq \int_{\gamma} |Dh_n| \, ds. 
\end{align}
Moreover, $\|Dh_n\|_{L^2(\Omega)}^2 = \mathcal H^2(X_n)$ and the latter is uniformly bounded from above by \eqref{ineq:measure_open}. By the Banach--Alaoglu theorem (see \cite[Theorem 2.4.1]{HKST:15}), there exists a function $g_h\in L^2(\Omega)$ such that a  subsequence of $|Dh_n|$ converges weakly in $L^2(\Omega)$ to $g_h$. We choose a Borel representative of $g_h$. We claim that $g_h$ is a weak upper gradient of $h$, which is the uniform limit of $f_n\circ h_n$.

Note that since each $f_n$ is an $\varepsilon_n$-isometry with $\varepsilon_n\to 0$, we have
$$\lim_{n\to\infty}d_{X_n}(h_n(a),h_n(b))=\lim_{n\to\infty} d_X( f_n(h_n(a)), f_n(h_n(b)))= d_X(h(a),h(b))$$
for every $a,b\in \Omega$. Also, by Mazur's lemma \cite[p.~19]{HKST:15}, there exists a sequence of convex combinations 
$$g_n=\sum_{i=n}^{M_n}\lambda_{i,n}|Dh_i|, \quad \sum_{i=n}^{M_n}\lambda_{i,n}=1, \quad 0\leq \lambda_{i,n}\leq 1, \,\, i\in \{n,\dots,M_n\}, \,\, n\in \N,$$ 
that converge strongly to $g_h$ in $L^2(\Omega)$. By Fuglede's lemma \cite[p.~131]{HKST:15}, there exists a curve family $\Gamma_0$ in $\Omega$ with $\Mod\Gamma_0=0$ such that
$$\lim_{n\to\infty}\int_{\gamma} |g_n-g_h| \, ds=0$$
for all $\gamma\notin \Gamma_0$. Taking the corresponding convex combinations in \eqref{lemma:weak_upper_gradient_seq} and passing to the limit, shows that $g_h$ is a weak upper gradient of $h$. 
\end{proof}

Thus, by passing to a subsequence, we may assume that $|Dh_n|$ converges weakly in $L^2(\Omega)$ to $g_h$.

\begin{lemm}[Quasiconformality]\label{lemma:jacobian}
For each Borel set $E\subset X$ we have
$$\int_{h^{-1}(E)} g_h^2 \, d\mathcal H^2 \leq K \mathcal H^2(E).$$
\end{lemm}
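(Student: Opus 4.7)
The plan is first to establish the inequality for compact sets $A \subset X$ and then to extend to arbitrary Borel sets by standard measure-theoretic approximation. Fix compact $A \subset X$ and $\varepsilon > 0$. Since $\mathcal{H}^2(X) < \infty$ and $A = \bigcap_{\delta > 0} \overline{N_\delta(A)}$, continuity of the measure from above produces $\delta > 0$ with $\mathcal{H}^2(B) < \mathcal{H}^2(A) + \varepsilon$, where $B := \overline{N_{2\delta}(A)}$. I would then set $W := h^{-1}(A)$ and $W_1 := h^{-1}(N_\delta(A))$, so that $W$ is compact in $\Omega$, $W_1$ is open with $W \subset W_1$, and $\overline{W_1} \subset h^{-1}(\overline{N_\delta(A)})$.

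Next I would exploit the uniform convergence $f_n \circ h_n \to h$ from \Cref{lemma:convergence_to_monotone}: once $n$ is large enough that $\sup_{\Omega} d_X(f_n \circ h_n, h) < \delta$, every $x \in \overline{W_1}$ satisfies $h(x) \in \overline{N_\delta(A)}$ and hence $f_n(h_n(x)) \in N_{2\delta}(A) \subset B$. Thus $h_n(\overline{W_1}) \subset f_n^{-1}(B)$ for all such $n$. Combining the lower semicontinuity of the $L^2$-norm under the weak convergence $|Dh_n| \rightharpoonup g_h$ guaranteed by \Cref{lemma:weak_upper_gradient} with the change-of-variables identity $\int_{\overline{W_1}} |Dh_n|^2\, d\mathcal{H}^2 = \mathcal{H}^2(h_n(\overline{W_1}))$ from \Cref{lemma:polyhedral_conformal} \ref{lemma:polyhedral_conformal_change} yields
\begin{align*}
\int_W g_h^2 \, d\mathcal H^2 &\leq \int_{\overline{W_1}} g_h^2 \, d\mathcal H^2 \leq \liminf_{n\to\infty} \int_{\overline{W_1}} |Dh_n|^2 \, d\mathcal H^2 \\
&= \liminf_{n\to\infty} \mathcal H^2(h_n(\overline{W_1})) \leq \liminf_{n\to\infty} \mathcal H^2(f_n^{-1}(B)) \\
&\leq K\,\mathcal H^2(B) < K(\mathcal H^2(A) + \varepsilon),
\end{align*}
where the last inequality uses the hypothesis of \Cref{theorem:uniformization_long}. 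Letting $\varepsilon \to 0$ settles the compact case.

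To promote the inequality to an arbitrary Borel $E \subset X$, I would introduce the finite Borel measure $\mu(F) := \int_{h^{-1}(F)} g_h^2 \, d\mathcal H^2$ on $X$. Every open $U \subset X$ is a countable increasing union of compact sets (since $X$ is compact metric), so the compact case and monotone convergence deliver $\mu(U) \leq K\,\mathcal H^2(U)$. Outer regularity of $\mathcal H^2$ as a finite Borel regular measure on $X$ then extends the inequality to Borel $E$ through
$$\mu(E) \leq \inf\{\mu(U) : U \supset E \text{ open}\} \leq K \inf\{\mathcal H^2(U) : U \supset E \text{ open}\} = K\,\mathcal H^2(E).$$

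The principal obstacle is the careful bookkeeping required to transfer information between the three metric spaces $\Omega$, $X_n$, and $X$, which are linked only through the approximate isometries $f_n$ and the uniform convergence $f_n \circ h_n \to h$. The crucial maneuver is the slight enlargement of the target from $A$ to the open set $N_\delta(A)$ and the compact set $B = \overline{N_{2\delta}(A)}$: this makes $W_1 = h^{-1}(N_\delta(A))$ an open neighborhood of $W = h^{-1}(A)$ whose image under each $h_n$, for $n$ large, is trapped inside $f_n^{-1}(B)$, which is the only set whose $\mathcal H^2$-measure the hypothesis of \Cref{theorem:uniformization_long} allows us to control.
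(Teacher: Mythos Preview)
Your proof is correct and follows essentially the same approach as the paper's: both arguments enlarge to a slightly bigger set in $X$, use uniform convergence of $f_n\circ h_n$ to $h$ to trap $h_n$ of a compact set inside $f_n^{-1}$ of that enlargement, invoke the change-of-variables identity from \Cref{lemma:polyhedral_conformal}, apply the measure hypothesis \eqref{ineq:measure_open}, and finish with lower semicontinuity of the $L^2$-norm under weak convergence. The only cosmetic difference is that the paper takes the compact set in $\Omega$ (using inner regularity of $\mathcal H^2$ there), whereas you take it in $X$ and then pass to open and Borel sets via outer regularity; both routes are equally valid.
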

\begin{proof}
Let $E\subset X$ be a Borel set and $A\subset h^{-1}(E)$ be a compact set. By the inner regularity of Hausdorff $2$-measure in $\Omega$, it suffices to show that
\begin{align*}
    \int_{A} g_h^2 \, d\mathcal H^2 \leq K \mathcal H^2(h(A)) \leq K \mathcal H^2(E).
\end{align*}
Since $h(A)$ is compact and $\mathcal H^2$ is finite on $X$, for each $\varepsilon>0$ there exists an open set $U\supset h(A)$ such that 
\begin{align}\label{lemma:jacobian_u}
    \mathcal H^2(\br U) \leq \mathcal H^2( h(A))+\varepsilon.
\end{align}
By the uniform convergence of $f_n\circ h_n$ to $h$, we conclude that $f_n(h_n(A))$ converges in the Hausdorff sense to $h(A)$ as $n\to\infty$, so $f_n(h_n(A)) \subset U$ and thus $h_n(A)\subset f_n^{-1}(U)$ for all sufficiently large $n\in \N$. Combining this with  Lemma \ref{lemma:polyhedral_conformal} \ref{lemma:polyhedral_conformal_change}, we have
\begin{align*}
    \int_{A} |Dh_n|^2 \, d\mathcal H^2 = \mathcal H^2( h_n(A)) \leq \mathcal H^2( f_n^{-1}(\br U))
\end{align*}
for all sufficiently large $n\in \N$. Passing to the limit and using \eqref{ineq:measure_open} and \eqref{lemma:jacobian_u}, we obtain
\begin{align*}
    \limsup_{n\to\infty}\int_{A} |Dh_n|^2 \, d\mathcal H^2 \leq K \mathcal H^2( \br U) \leq K\mathcal H^2(h(A))+K\varepsilon.
\end{align*}
Next, we let $\varepsilon\to 0$. Finally, since $|Dh_n|$ converges weakly in $L^2(\Omega)$ to $g_h$, we see that $|Dh_n|\chi_A$ also converges weakly to $g_h \chi_A$, which implies that 
$$ \int_A g_h^2 \, d\mathcal H^2 \leq \liminf_{n\to\infty} \int_A |Dh_n|^2 \, d\mathcal H^2.$$
This completes the proof.
\end{proof}

\begin{lemm}\label{lemma:quasiconformality_geometric}
We have $h\in N^{1,2}( \Omega, X)$. Moreover, for every curve family $\Gamma$ in $\Omega$ we have
$$\Mod \Gamma \leq K \Mod h(\Gamma).$$
\end{lemm}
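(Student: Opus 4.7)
The plan is to assemble the result directly from the two preceding lemmas, together with Lemma \ref{lemma:weak_upper_gradient_path_integral}\ref{lemma:weak_upper_gradient_measure_ineq}, which converts an integral bound on an upper gradient into a modulus inequality.

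First I would verify the Sobolev membership $h\in N^{1,2}(\Omega,X)$. By Lemma \ref{lemma:weak_upper_gradient}, the function $g_h$ is a weak upper gradient of $h$, and it lies in $L^2(\Omega)$ because it is the weak $L^2$-limit of the sequence $|Dh_n|$, whose $L^2$-norms satisfy $\|Dh_n\|_{L^2(\Omega)}^2=\mathcal H^2(X_n)$; this sequence is uniformly bounded thanks to the hypothesis on $f_n$ together with the finiteness of $\mathcal H^2(X)$. Since both $\Omega$ and $X$ are compact, for any fixed $y\in X$ the function $x\mapsto d_X(y,h(x))$ is bounded on $\Omega$, hence lies in $L^2(\Omega)$. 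This gives $h\in N^{1,2}(\Omega,X)$.

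For the modulus inequality, I would apply Lemma \ref{lemma:weak_upper_gradient_path_integral}\ref{lemma:weak_upper_gradient_measure_ineq} to the continuous map $h$ (continuity coming from Lemma \ref{lemma:convergence_to_monotone}) with weak upper gradient $g_h$. The required bound
\[\int_{h^{-1}(E)} g_h^2 \, d\mathcal H^2 \leq K\mathcal H^2(E)\]
for every Borel set $E\subset X$ is exactly the content of Lemma \ref{lemma:jacobian}. Hence the cited lemma yields
\[\Mod \Gamma \leq K\Mod h(\Gamma)\]
for every curve family $\Gamma$ in $\Omega$, which is the desired inequality.

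There is no real obstacle here; the statement is a bookkeeping consequence of the earlier results. The only minor care point is confirming the $N^{1,2}$ membership, which reduces to the elementary boundedness observations above.
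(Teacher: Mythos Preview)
Your proposal is correct and follows essentially the same route as the paper's proof: invoke Lemma~\ref{lemma:weak_upper_gradient} for the weak upper gradient $g_h$, Lemma~\ref{lemma:jacobian} for the integral inequality, and then Lemma~\ref{lemma:weak_upper_gradient_path_integral}\ref{lemma:weak_upper_gradient_measure_ineq} to deduce the modulus bound. You simply spell out the $N^{1,2}$ membership more explicitly than the paper does, which is fine.
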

\begin{proof}
By Lemma \ref{lemma:weak_upper_gradient}, $g_h$ is a weak upper gradient of $h$. The conclusions now follow from Lemma \ref{lemma:jacobian} and Lemma \ref{lemma:weak_upper_gradient_path_integral} \ref{lemma:weak_upper_gradient_measure_ineq}.
\end{proof}
With this lemma, the proof of Theorem \ref{theorem:uniformization_long} is complete.

\bigskip 

\subsection{Quasisymmetric uniformization}\label{sec:qs}
In this section, we give an alternative proof of the Bonk--Kleiner theorem, stated as Corollary \ref{cor:bonk-kleiner}. We first recall the necessary definitions. Let $(X,d_X)$ and $(Y,d_Y)$ be metric spaces. A homeomorphism $f \colon X \to Y$ is \textit{quasisymmetric} if there exists a homeomorphism $\eta\colon [0,\infty) \to [0, \infty)$ such that 
\[ \frac{d_Y(f(x),f(y))}{d_Y(f(x),f(z))}  \leq \eta\left( \frac{d_X(x,y)}{d_X(x,z)} \right) \]
for all distinct points $x,y,z \in X$. Next, a metric space $X$ is \emph{Ahlfors $2$-regular} if there exists a constant $C\geq 1$ such that for all $0<r<\diam(X)$ we have
\begin{align*}
    C^{-1}r^2\leq \mathcal H^2(B(x,r)) \leq Cr^2.
\end{align*}
Moreover, we say that $X$ is \emph{linearly locally connected} (abbreviated \textit{LLC}) if there exists $\lambda\geq 1$ such that for any ball $B(a,r)\subset X$ the following conditions hold:
\begin{description}
    \item[LLC(1)]If $x,y\in B(a,r)$, then there exists a continuum $E\subset B(a,\lambda r)$ containing $x$ and $y$.
    
    \item[LLC(2)]If $x,y\in X\setminus B(a,r)$, then there exists a continuum $E\subset X\setminus B(a,r/\lambda)$ containing $x$ and $y$.
\end{description}
In this case, we say that $X$ is $\lambda$-LLC.

Let $X$ be a metric $2$-sphere that is Ahlfors $2$-regular and LLC. By a result of Semmes \cite[Theorem B.6]{Sem:96}, $X$ is {quasiconvex}, quantitatively. That is, there exists a constant $c\geq 1$ depending only on the Ahlfors regularity and linear local connectivity constants such that for any two points $x,y\in X$ there exists a curve $\gamma$ connecting them with 
$$\ell(\gamma)\leq cd(x,y).$$
Alternatively, one can obtain the quasiconvexity from a result of Wildrick \cite[Corollary 4.8]{Wil:10}. This implies that we can {replace the metric on $X$ with a bi-Lipschitz equivalent length metric} that is Ahlfors $2$-regular and LLC, quantitatively. Therefore, in order to prove Corollary \ref{cor:bonk-kleiner}, we may assume in addition that $X$ is a length space.

By Theorem \ref{thm:one-sided_qc}, there exists a weakly quasiconformal mapping $h\colon \widehat{\C}\to X$.  Under the Ahlfors $2$-regularity condition, such a mapping $h$ is necessarily a homeomorphism, as follows from Theorem \ref{theorem:homeomorphism} below. Finally, the following general result implies that $h$ is quasisymmetric, thus completing the proof of Corollary \ref{cor:bonk-kleiner}.

\begin{thm}
Let $X$ be a metric $2$-sphere that is Ahlfors $2$-regular and LLC. Suppose that $g\colon \widehat{\C}\to X$ is homeomorphism such that 
$$\Mod \Gamma\leq K\Mod g(\Gamma)$$
for some $K\geq 1$ and for all curve families $\Gamma$ in $\widehat{\C}$.  Then $g$ is quasisymmetric.
\end{thm}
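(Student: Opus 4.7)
The plan is to reduce the statement to showing that $g$ is \emph{weakly quasisymmetric}, meaning there exists $H\geq 1$ such that $\sigma(x,y)\leq \sigma(x,z)$ implies $d_X(g(x),g(y))\leq H\, d_X(g(x),g(z))$ for all distinct $x,y,z\in\widehat{\C}$, where $\sigma$ denotes the spherical metric. Since $\widehat{\C}$ is connected and $X$ is bounded, weak quasisymmetry upgrades to full quasisymmetry by a standard theorem of Heinonen (see \cite{Hei:01}).

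To prove weak quasisymmetry I argue by contradiction, producing from a hypothetical failure a sequence of triples $(x_n,y_n,z_n)\subset \widehat{\C}$ with $\sigma(x_n,y_n)\leq \sigma(x_n,z_n)$ while $R_n/r_n\to \infty$, where $r_n=d_X(g(x_n),g(z_n))$ and $R_n=d_X(g(x_n),g(y_n))$. Using LLC(1) at $g(x_n)$, I join $g(x_n)$ and $g(z_n)$ by a continuum $E_n'\subset \overline{B}_X(g(x_n),\lambda r_n)$. Using LLC(2) at $g(x_n)$ together with a fixed ``far'' base point $w\in X$, chosen along a subsequence so that $d_X(g(x_n),w)\geq \diam(X)/3$ (which can be arranged by compactness of $X$), I join $g(y_n)$ and $w$ by a continuum $F_n'\subset X\setminus B_X(g(x_n),R_n/\lambda)$. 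Set $E_n=g^{-1}(E_n')$ and $F_n=g^{-1}(F_n')$, and let $\Gamma_n$ denote the family of rectifiable curves in $\widehat{\C}$ joining $E_n$ and $F_n$. Then $g(\Gamma_n)$ is contained in the family of rectifiable curves in $X$ joining $E_n'$ and $F_n'$.

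By passing to a further subsequence and renormalizing by M\"obius transformations of $\widehat{\C}$, I arrange that the spherical diameters of $E_n$ and $F_n$ and their spherical distance $\dist(E_n,F_n)$ stay bounded away from $0$ and from $\diam(\widehat{\C})$, so that the relative distance $\Delta(E_n,F_n)$ is uniformly bounded. Since $\widehat{\C}$ is $2$-Loewner, there is a universal constant $c_0>0$ such that $\Mod\Gamma_n\geq c_0$ for every $n$. On the other hand, the annulus $B_X(g(x_n),R_n/\lambda)\setminus \overline{B}_X(g(x_n),\lambda r_n)$ is crossed by every curve in the image family, and Ahlfors $2$-regularity of $X$ yields the standard annular upper modulus bound
$$\Mod g(\Gamma_n)\leq \frac{C}{\log(R_n/(\lambda^2 r_n))}\to 0.$$
The hypothesis $\Mod \Gamma_n\leq K\Mod g(\Gamma_n)$ then contradicts $c_0>0$ for $n$ sufficiently large, giving the desired weak quasisymmetry.

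The main technical point will be the simultaneous normalization: one needs to choose the base point $w$ and pass to the contradiction subsequence in such a way that the pulled-back continua $E_n$ and $F_n$ have spherical diameters uniformly bounded below and are not squeezed arbitrarily close together, since the Loewner lower bound degenerates otherwise. Once this is arranged, the Loewner lower bound on $\widehat{\C}$ and the annular upper bound on $X$ are both classical (cf.\ \cite[Section 7.9]{Hei:01}), and the final passage from weak to full quasisymmetry is standard.
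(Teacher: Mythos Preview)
The paper does not prove this theorem at all; it only cites \cite[Theorem~2.5]{LW:20} and \cite[Proof of Corollary~1.7]{Raj:17}. Your outline follows precisely the standard argument found in those references: reduce to weak quasisymmetry, argue by contradiction, and play a Loewner lower bound on $\widehat{\C}$ against an Ahlfors-regular annulus upper bound on $X$ via the one-sided modulus inequality.

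One step is imprecise and would fail as written. Your choice of base point $w\in X$ by the condition $d_X(g(x_n),w)\geq\diam(X)/3$ guarantees only that $p\coloneqq g^{-1}(w)$ is bounded away from the limit point of $x_n$; it does \emph{not} rule out $y_n\to p$. In that situation $\diam(F_n)\to 0$ while $\dist(E_n,F_n)$ may remain bounded below, so $\Delta(E_n,F_n)\to\infty$ and the Loewner bound is lost. Your proposed fix by M\"obius normalization works, but only if you normalize the right three points: sending $x_n\mapsto 0$, $y_n\mapsto 1$, $p\mapsto\infty$ makes $\diam(\tilde F_n)$ uniformly large, and the uniform quasisymmetry of M\"obius maps on $\widehat{\C}$ together with $\sigma(x_n,y_n)\leq\sigma(x_n,z_n)$ forces $\varphi_n(z_n)$ to stay bounded away from $0$, giving the needed lower bound on $\diam(\tilde E_n)$. (Normalizing $x_n,z_n,p$ instead does \emph{not} work, since then $\varphi_n(y_n)$ can escape to $\infty$.) Alternatively, and more simply, choose $p\in\widehat{\C}$ directly after passing to a subsequence so that $p$ differs from the subsequential limits of both $x_n$ and $y_n$; then no normalization is needed and $\Delta(E_n,F_n)\leq 1$ follows directly. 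Finally, your claim that the spherical distance $\dist(E_n,F_n)$ stays bounded away from $0$ is neither true in general nor required.
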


See \cite[Theorem 2.5]{LW:20} or \cite[Proof of Corollary 1.7]{Raj:17} for a proof.

\bigskip

\subsection{Non-compact metric surfaces}\label{sec:noncompact}
Suppose that $X$ is a non-compact length space homeomorphic to $\C$ that has locally finite Hausdorff $2$-measure. In this subsection, we show that there exists a weakly $K$-quasiconformal map $h\colon \Omega \to X$, where $\Omega=\mathbb D$ or $\Omega=\C$. This {proves} Theorem \ref{thm:one-sided_qc} in the non-compact case.

Consider an approximately isometric sequence $f_n\colon X_n\to X$ of topological  embeddings as in Theorem \ref{thm:main}. By the discussion in Section \ref{sec:planar_case}, $f_n$ can be chosen so that the following additional conditions hold. There exists an exhaustion of $X$ by an increasing sequence of closed topological disks $\overline{D_k}$, $k\in \N$, such that for all $n\geq k$, $\overline{D_{k}^n}=f_n^{-1}(\overline{D_k})$ is a polyhedral closed topological disk and ${f_n|_{\br{D_k^n}}}\colon \br{D_k^n}\to \br{D_k}$ is a homeomorphism. Moreover, $\overline{D_k}$ is a length space with metric
$$d_k(x,y)= \inf_{\gamma} \ell_{d_X}(\gamma),$$
where the infimum is taken over all paths $\gamma\subset \overline{D_k}$ connecting $x$ and $y$ and the length of $\gamma$ is computed with the metric of $X$. Note that $d_X\leq d_k$ and that $d_k$ is locally isometric to $d_X$ in $D_k=\Int(\overline{D_k})$. Similarly,  $\overline{D_{k}^{n}}$ is a length space with metric $d_k^n$ defined analogously. {Finally,} as stated in Lemma \ref{lemma:convergence_D}, the conclusions of Theorem \ref{thm:main} are true for the restriction of $f_n$ to $\overline{D_k^n}$. 

We split the proof of the existence of a weakly $K$-quasiconformal parametrization of $X$ into several parts. 

\vspace{1em}

\noindent
\textit{Step 1: Normalizations in $X_n$ and $X$.}
We fix distinct points $p,q\in D_1$. Since $f_n\colon X_n\to X$ is an approximately isometric sequence, there exist points $p_n,q_n\in D_1^n$, {$n \in \mathbb{N}$}, such that $f_n(p_n) \to p$ and $f_n(q_n)\to q$. Here, $D_k^n = \Int(\overline{D_k^n})$. Since ${f_n|_{\br{D_1^n}}}\colon \br{D_1^n}\to \br{D_1}$ is a homeomorphism, we have $f_n(\partial D_1^n)=\partial D_1$ and  
\begin{align*}
    \liminf_{n\to\infty}\dist_{d_{X_n}}(p_n, \partial D_1^n) \geq \dist_X (p,\partial D_1)>0.
\end{align*}
In particular, the distance from $p_n$ to $\partial D_1^n$ is uniformly bounded away from $0$. Since $D_1^n\subset D_k^n$, $k\in \N$, $n\geq k$, it follows that for each $k\in \N$, the distance from $p_n$ to $\partial D_k^n$ is bounded away from $0$, uniformly in $n\geq k$. The same conclusions hold for the point $q_n$. Finally, the distance from $p_n$ to $q_n$ is uniformly bounded away from $0$. All these conclusions hold for the metric $d_{X_n}$ and {thus} also the metric $d_k^n$, which is larger than $d_{X_n}$.

\vspace{1em}

\noindent
\textit{Step 2: Uniformization by disks and normalizations in the plane.}
By Theorem \ref{theorem:uniformization}, for each $k\in \N$ and for $n\geq k$ there exists a conformal map from {$\overline{\mathbb{D}}$ onto $\br{D_{k}^n}$. By precomposing with a M\"obius transformation, we obtain a conformal map $h_k^n$ from} a disk $\br B(0,r_k^n)\subset \C$ with $r_{k}^{n}>1$ onto $\br{D_{k}^n}$ such that $h_k^n(0)=p_n$ and $h_{k}^n(1)=q_n$. 

We claim that for each fixed $k\in \N$ the sequence $\{r_k^n\}_{n\geq k}$ is bounded above. Let $E$ be the unit interval {$[0,1]$} inside $B(0,r_k^n)$ and $F_n=\partial B(0,r_k^n)$. Consider the continua $h_k^n(E)$ and $h_{k}^n(F_n)=\partial D_k^n$, and recall {from Lemma \ref{lemma:convergence_D}} that the sequence ${f_n|_{\br{D_k^n}}}\colon ( \overline{D_k^n}, d_k^n)\to  (\overline{D_k}, d_k)$, $n\geq k$, is approximately isometric. From Lemma \ref{lemma:boundary_convergence}, $\partial D_k^n$ has diameter uniformly bounded below away from $0$ for $n\geq k$. Since $p_n,q_n\in h_k^n(E)$, that set also has diameter uniformly bounded away from $0$. From Lemma \ref{lemma:modulus_bound_convergence}, we conclude that $\Mod \Gamma^*( h_k^n(E), h_k^n(F_n); \br{D_k^n})$ is uniformly bounded above in $n\geq k$. Since $h_k^n$ is conformal, it follows that $\Mod \Gamma^*( E,F_n; \br B(0,r_k^n))$ is uniformly bounded above. On the other hand, $\Gamma^*( E,F_n; \br B(0,r_k^n))$ contains the circles $\partial B(0,r)$ for all $1<r<r_k^n$, so
$$\frac{1}{2\pi} \log \left(r_k^n\right)\leq  \Mod \Gamma^*( E,F_n; \br B(0,r_k^n)).$$
The boundedness of $r_k^n$ follows.

For fixed $k\in \N$, consider the sequence $g_n(z)= h_k^n(r_k^nz)$, $n\geq k$, from $\br{\mathbb D}$ onto $\br{D_k^n}$. We show that this sequence is normalized in the sense Theorem \ref{theorem:uniformization_long}, using the metric $d_k^n$ in the target. Note that the points $0$, $1/r_k^n$, and $-1$ of $\br{\mathbb D}$ have mutual distances uniformly bounded away from $0$ as $n\to \infty$. Moreover, we have $g_n(0)= p_n$, $g_n(1/r_k^n)=q_n$, and $g_n(-1)\in \partial D_k^n$, and by Step 1 the mutual distances of these points are also bounded away from $0$. Thus, the sequence $g_n$ is normalized, as claimed. 

\vspace{1em}

\noindent
\textit{Step 3: Weakly quasiconformal parametrizations.} By Theorem \ref{theorem:uniformization_long}, for each $k\in \N$, there exists a subsequence of $f_n\circ g_n$, $n\geq k$, that converges uniformly on $\br{\mathbb D}$ to a weakly $K$-quasiconformal map onto $\overline{D_k}$. Since $r_k^n$ is bounded above and below in $n\geq k$, we conclude that there exists a subsequential limit $r_k$ {of} $r_k^n$ such that the sequence $f_n\circ h_k^n$ has a subsequence that converges to a weakly $K$-quasiconformal map $h_k\colon \br B(0,r_k) \to \br{D_k}$. We remark that the modulus of curve families in $\br {D_k}$ is computed with respect to the metric $d_k$ here. Note that  $h_k(B(0,r_k))\supset D_k$ by Lemma \ref{lemma:monotone_boundary}. By passing to a diagonal subsequence, we assume that $r_k^n$ converges to $r_k$ and $f_n\circ h_k^n$ converges to $h_k$ for each $k\in \N$. 

\vspace{1em}

\noindent
\textit{Step 4: Normal families argument.} 
Now we fix $n\geq l\geq k$. In $B(0,r_k^n)$, we have 
$$f_n \circ h_l^n  \circ (h_l^n)^{-1}\circ h_k^n =f_n\circ h_k^n.$$
Note that the conformal embedding $(h_l^n)^{-1}\circ h_k^n\colon B(0,r_k^n)\to B(0,r_l^n)$ fixes $0$ and $1$, and that the balls $B(0,r_l^n)$ are uniformly bounded in $n$. By Montel's theorem \cite[Theorem 10.7, p.~160]{Mar:19}, as $n\to\infty$ these maps {subconverge}  locally uniformly to a conformal homeomorphism ${\varphi_{k,l}} \colon B(0,r_k)\to \Omega_{k,l}$, where ${\Omega_{k,l}} \subset B(0,r_l)$. Moreover, since $D_{k}^n\subset D_{k+1}^n$, we have $\Omega_{k,l} \subset \Omega_{k+1,l}$ for all $l \geq k+1$. By {passing to} a diagonal subsequence, we may assume that $(h_l^n)^{-1}\circ h_k^n$ converges to $\varphi_{k,l}$ for each $l\geq k$ and
$$h_l\circ \varphi_{k,l} =h_k$$
in $B(0,r_k)$ for all $l\geq k$. 

Next, note that for fixed $k\in \N$ and for $l\geq k$ the conformal maps $\varphi_{k,l}\colon B(0,r_k)\to \Omega_{k,l}$ form a normal family since they fix the points $0$ and $1$; see \cite[Exercise 12.29 (v), p.~441]{Bur:79}. Thus they converge along a subsequence to a conformal homeomorphism $\varphi_k\colon B(0,r_k)\to \Omega_k$, where $\Omega_k\subset \C$ is a simply connected domain. Hence, $h_l=h_k\circ \varphi_{k,l}^{-1}$ converges along a subsequence of $l\to \infty $ locally uniformly on $\Omega_k$ to the map $h=h_k\circ \varphi_k^{-1}\colon \Omega_k\to \br{D_k}$. Note that the limiting map $h$ is independent of $k$, since it was obtained as a limit of $h_l$. Moreover, $h(\Omega_k)=h_k(B(0,r_k)) \supset D_k$, and $h\colon \Omega_k\to h(\Omega_k)$ is weakly $K$-quasiconformal, where the modulus in $h(\Omega_k) \subset \br {D_k}$ is computed with the metric $d_k$.

By considering a diagonal sequence, we may obtain a map $h$ that maps $\Omega_k$ onto $h(\Omega_k)$ with $D_k\subset h(\Omega_k)\subset \br{D_k}$ for each $k\in \N$ and is weakly $K$-quasiconformal on $\Omega_k$ (using the metric $d_k$ in the image). Additionally, we have $\Omega_k\subset \Omega_{k+1}$. This is true by Carath\'eodory's kernel convergence theorem \cite[Chapter I, Theorem 1.8]{Pom:92}, since $\Omega_{k,l}$ converges to $\Omega_k$ as $l\to \infty$.
\vspace{1em}

\noindent
\textit{Step 5: The limiting parametrization.} 
Since $\Omega_k\subset \Omega_{k+1}$, the set $\Omega=\bigcup_{k=1}^\infty \Omega_k$ is a simply connected domain in $\C$. The map $h$ is a continuous map from $\Omega$ onto $X=\bigcup_{k=1}^\infty D_k$. Since $h|_{\Omega_k}$ is monotone and $\Omega_k\subset \Omega_{k+1}$, it follows that $h$ is monotone on $\Omega$. 

Finally, we argue {that $h$ is weakly quasiconformal} on $\Omega$. By the monotonicity of modulus, it suffices to show that if $\Gamma$ is a curve family contained in a compact subset of $\Omega$, then $\Mod \Gamma\leq K\Mod h(\Gamma)$. By continuity, $h(\Gamma)$ is contained in a compact subset of $X$, so there exists $k\in \N$ such that $h(\Gamma)\subset D_k$. Recall that $h|_{\Omega_k}$ is weakly $K$-quasiconformal, so we obtain the desired inequality but with the modulus of $h(\Gamma)$ computed in the metric $d_k$ rather than in $d_X$. However, $d_k$ is locally isometric to $d_X$ in $D_k$, so the modulus of $h(\Gamma)$ is the same in both metrics.

As {the final step}, by precomposing $h$ with a conformal map, we may {obtain} that $\Omega= \mathbb D$ or $\Omega=\C$.\qed

\bigskip

\section{Further properties of weakly quasiconformal mappings} \label{sec:further_properties}

In this section we establish further properties of weakly quasiconformal mappings $h\colon X \to Y$, where $X$ and $Y$ are metric surfaces with locally finite Hausdorff $2$-measure. Recall that $h$ is weakly quasiconformal if it is continuous, surjective, monotone, and there exists $K\geq 1$ such that for all curve families $\Gamma$ in $X$ we have
$$\Mod \Gamma\leq K \Mod h(\Gamma).$$

The main result in this section concerns the equivalence of different definitions of weak quasiconformality. In general, if $X$ and $Y$ are metric surfaces with locally finite Hausdorff $2$-measure and $h\in N^{1,2}_{\loc}(X,Y)$, then there exists a minimal weak upper gradient of $h$ that we denote by $g_h$; see \cite[Theorem 6.3.20, p.~162]{HKST:15}. 

\begin{thm}[Definitions of quasiconformality]\label{theorem:qc_definitions}
Let $X,Y$ be metric surfaces with locally finite Hausdorff $2$-measure and let $h\in N^{1,2}_{\loc}(X,Y)$ be a continuous and monotone mapping. The following are equivalent.
\begin{enumerate}[label=\normalfont(\roman*)]
    \item \label{item:qc_equivalence_i} For every Borel set  $ E \subset Y$ we have $$\int_{h^{-1}(E)}g_h^2 \, d\mathcal H^2 \leq K \mathcal H^2(E).$$
    \item \label{item:qc_equivalence_iii} The set function $\nu(E)=\mathcal H^2(h(E))$ is an outer regular, locally finite Borel measure on $X$. Moreover, if $J_h$ is the Radon--Nikodym derivative of $\nu$ with respect to $\mathcal H^2$, then for $\mathcal H^2$-a.e.\ $x\in X$ we have
    $$g_h(x)^2\leq KJ_h(x).$$ 
\end{enumerate}
\end{thm}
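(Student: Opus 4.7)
The plan is to handle the two directions separately. For (ii)$\Rightarrow$(i), the argument is a one-line calculation: given a Borel set $E\subset Y$, set $A=h^{-1}(E)$, and use the hypothesis $g_h^2\leq KJ_h$ together with the Radon--Nikodym identity to obtain
\[
\int_{A}g_h^2\,d\mathcal H^2 \leq K\int_A J_h\,d\mathcal H^2 = K\nu(A) = K\mathcal H^2(h(A)) \leq K\mathcal H^2(E),
\]
the last inequality using the set-theoretic fact $h(h^{-1}(E))\subset E$.

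For (i)$\Rightarrow$(ii), the first and central task is to establish that $\nu(E):=\mathcal H^2(h(E))$ really is a locally finite, outer regular Borel measure on $X$. Local finiteness and outer regularity will be routine consequences of the continuity of $h$ and the corresponding properties of $\mathcal H^2$ on $Y$, and subadditivity follows from $h(\bigcup_i E_i)=\bigcup_i h(E_i)$. The nontrivial point is countable additivity on disjoint Borel $A,B\subset X$, which via
\[
h(A)\cap h(B)\subset N := \{y\in Y : |h^{-1}(y)|\geq 2\}
\]
reduces to showing $\mathcal H^2(N)=0$. Since $h$ is monotone, every fiber $h^{-1}(y)$ with $y\in N$ is a non-degenerate continuum; I would prove the vanishing of $\mathcal H^2(N)$ by combining hypothesis (i) with the modulus characterization \Cref{theorem:qc_definitions_williams}. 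Curve families supported in fibers of $h$ are mapped to constant curves and so have zero modulus in $Y$, hence zero modulus in $X$; a Fubini-type decomposition of $N$, exploiting the separation properties of non-trivial fibers in the $2$-manifold $X$, then forces $\mathcal H^2(N)=0$.

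Once $\nu$ is known to be a locally finite Borel measure, its Lebesgue decomposition against $\mathcal H^2$ provides a non-negative Borel density $J_h$ with $\nu = J_h\,d\mathcal H^2+\nu_s$, $\nu_s\perp\mathcal H^2$. A Besicovitch- or Vitali-type differentiation argument on the $2$-manifold will identify $J_h$ at $\mathcal H^2$-a.e.\ $x$ as the limit $\lim_{r\to 0}\mathcal H^2(h(B(x,r)))/\mathcal H^2(B(x,r))$. For the pointwise inequality, apply (i) with $E=h(\br{B(x,r)})$, which is compact hence Borel; since $B(x,r)\subset h^{-1}(E)$, this yields
\[
\int_{B(x,r)} g_h^2\,d\mathcal H^2 \leq K\mathcal H^2\bigl(h(\br{B(x,r)})\bigr).
\]
Dividing by $\mathcal H^2(B(x,r))$ and letting $r\to 0$, Lebesgue differentiation of $g_h^2\in L^1_{\loc}(X)$ turns the left-hand side into $g_h(x)^2$ at almost every $x$, while the right-hand side converges to $KJ_h(x)$, giving the required bound.

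The main obstacle will be the countable additivity of $\nu$, equivalently the vanishing of $\mathcal H^2(N)$. This is precisely the ``technicality'' of justifying the existence of the Jacobian that the authors flag in the introduction, and it is where the combined assumptions of continuity, monotonicity, Sobolev regularity, and the modulus inequality (i) must be used together in an essential way.
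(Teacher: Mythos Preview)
Your overall strategy matches the paper's—both recognize that the crux of (i)$\Rightarrow$(ii) is proving that $\nu(E)=\mathcal H^2(h(E))$ is a genuine Borel measure, and both leverage the modulus inequality coming from (i). However, there are two genuine gaps.

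First, your sketch for $\mathcal H^2(N)=0$ does not work. Under the paper's convention (admissibility requires $\int_\gamma\rho\geq 1$ for all locally rectifiable $\gamma\in\Gamma$), a family of constant curves has modulus $\infty$, not $0$, so the inequality $\Mod\Gamma\leq K\Mod h(\Gamma)$ gives no information about curve families $\Gamma$ supported in fibers. Even with a convention that discards constant curves, you would only conclude that curves lying in fibers have zero modulus in $X$; it is unclear how a ``Fubini-type decomposition'' would convert this into $\mathcal H^2_Y(N)=0$—you are measuring a set in $Y$, not in $X$. The paper argues in the opposite direction: if $h^{-1}(y)$ is a non-trivial continuum, then the family of curves in $X$ joining it to some other continuum has \emph{positive} modulus (\Cref{lemma:curves_continuum}), hence so does its image, which consists of non-constant curves through $y$. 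Thus $y$ lies in the set $C(Y)$ of points where non-constant curves have positive modulus, and one shows $C(Y)\subset B(Y)$ (the set of infinite lower density) with $\mathcal H^2(B(Y))=0$.

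Second, your differentiation argument for $g_h^2\leq KJ_h$ relies on Lebesgue differentiation of $g_h^2$ and of the measure $\nu$ with respect to $\mathcal H^2$ on $X$. On a general metric surface the Hausdorff $2$-measure need not be doubling, so neither Vitali nor Besicovitch covering is available and these differentiation theorems are not justified. The paper bypasses differentiation entirely: using that $g_h=0$ a.e.\ on $h^{-1}(B(Y))$ (from (i) with $E=B(Y)$) and that $h$ is injective off $h^{-1}(B(Y))$ (so $h(E)\setminus B(Y)$ is Borel by Lusin--Souslin), one gets $\int_E g_h^2\,d\mathcal H^2\leq K\mathcal H^2(h(E))=K\nu(E)$ for \emph{every} Borel $E\subset X$. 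Writing $\nu=J_h\,d\mathcal H^2+\nu_s$ and restricting to a full-measure set on which $\nu_s$ vanishes then yields $g_h^2\leq KJ_h$ a.e.\ directly. (In (ii)$\Rightarrow$(i), your ``$=K\nu(A)$'' should be ``$\leq K\nu(A)$'', since $\nu$ may have a singular part; this is harmless for the inequality chain.)
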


Combining this theorem with the result of Williams stated in Theorem \ref{theorem:qc_definitions_williams}, we obtain Theorem \ref{theorem:definitions_qc_1}; that is, $h$ is weakly $K$-quasiconformal if and only if it satisfies \ref{item:qc_equivalence_iii}.

The more intricate implication is from \ref{item:qc_equivalence_i} to  \ref{item:qc_equivalence_iii}, because $h$ is not  assumed to be a homeomorphism. One needs to make sense of the Jacobian of $h$ first. We note that it is not immediate that $E\mapsto \mathcal H^2(h(E))$ is a measure on $\Omega$, since $h$ is not a homeomorphism. Instead, we use the weak quasiconformality of $h$ to derive this.

\begin{rem}
We note that if one uses the Borel measure (see \cite[Theorem 2.10.10, p.~176]{Fed:69})
\begin{align*}
    \widetilde\nu(E)= \int_Y \# (h^{-1}(y)\cap E) \, d\mathcal H^2(y) \geq \mathcal H^2(h(E))
\end{align*}
in place of $\nu$, where $\# (A)$ denotes the cardinality of the set $A$, then the equivalence between \ref{item:qc_equivalence_i} and \ref{item:qc_equivalence_iii} is immediate provided that this measure is $\sigma$-finite (so that one can define its Radon--Nikodym derivative). The latter is guaranteed if $h$ is a homeomorphism, but it is not true in general under merely continuity and monotonicity. Our proof below in fact shows that $\widetilde \nu= \nu$ for weakly quasiconformal mappings, since $\# (h^{-1}(y))=1$ for $\mathcal H^2$-a.e. $y\in Y$; see Lemma \ref{lemma:injective}. 
\end{rem}

\begin{rem}
    The discussion in this section and the equivalence of definitions of weak quasiconformality can be generalized immediately to metric $n$-manifolds, $n\geq 3$, provided that an $n$-dimensional version of Lemma \ref{lemma:curves_continuum} holds. We are not aware of any such result in the literature, so we consider only the case of $2$-manifolds.
\end{rem}

The techniques used in the proof of Theorem \ref{theorem:qc_definitions} allow us to derive the following topological consequence.

\begin{thm}\label{theorem:homeomorphism}
Let $X,Y$ be metric surfaces without boundary and with locally finite Hausdorff $2$-measure and let $h\colon X\to Y$ be a weakly quasiconformal mapping. If the modulus of the family of non-constant curves passing through $y$ is zero for every $y\in Y$, then $h$ is a homeomorphism. Moreover, a sufficient condition for this property is that 
\begin{align*}
    \liminf_{r\to 0} \frac{\mathcal H^2(B(y,r))}{r^2}<\infty
\end{align*}
for every $y\in Y$.
\end{thm}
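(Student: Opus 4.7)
The proof splits naturally into two parts. For the first statement, the plan is to reduce to showing that $h$ is injective: since $X$ and $Y$ are $2$-manifolds of the same dimension without boundary, a continuous injection between them is open by invariance of domain, so a continuous bijection is automatically a homeomorphism. Arguing by contradiction, suppose there is some $y\in Y$ for which $E:=h^{-1}(y)$ contains at least two points. By monotonicity, $E$ is closed and connected, so it contains a non-degenerate subcontinuum $E_0$. Consider the family $\Gamma'$ of non-constant locally rectifiable curves $\gamma$ in $X$ with $|\gamma|\cap E_0\neq\emptyset$ and $|\gamma|\not\subset E$. For such $\gamma$, the image $h\circ\gamma$ attains the value $y$ (at points of $|\gamma|\cap E_0$) and at least one other value (at points of $|\gamma|\setminus E$), so $h\circ\gamma$ belongs to $\Gamma_y$, the family of non-constant curves through $y$. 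Weak quasiconformality and the monotonicity of modulus then yield
\[\Mod\Gamma'\leq K\Mod h(\Gamma')\leq K\Mod\Gamma_y=0.\]
On the other hand, Lemma \ref{lemma:curves_continuum} (the metric-surface analog of the fact that non-degenerate continua admit families of curves through them of positive modulus) gives $\Mod\Gamma'>0$, a contradiction.

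For the sufficient condition, fix $y\in Y$ and use the $\liminf$ hypothesis to extract a sequence $r_k\to 0$ with $r_{k+1}<r_k/2$ and $\mathcal{H}^2(B(y,r_k))\leq Cr_k^2$ for some $C>0$. For each $R>0$, let $\Gamma_y^R\subset\Gamma_y$ consist of curves of diameter at least $R$. Choose $k_0$ with $r_{k_0}<R/2$ and, for each $N\in\mathbb{N}$, consider
\[\rho_N=\sum_{k=k_0}^{k_0+N-1}\frac{2}{Nr_k}\chi_{A_k},\qquad A_k=B(y,r_k)\setminus B(y,r_{k+1}).\]
Any $\gamma\in\Gamma_y^R$ contains a subarc from $y$ to $\partial B(y,R/2)$, and examining the $1$-Lipschitz function $s\mapsto d(\gamma(s),y)$ shows this subarc spends length at least $r_k-r_{k+1}\geq r_k/2$ inside each annulus $A_k$; this yields $\int_\gamma\rho_N\,ds\geq 1$. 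A direct computation gives $\int\rho_N^2\,d\mathcal{H}^2\leq 4C/N$, so $\Mod\Gamma_y^R\leq 4C/N\to 0$ as $N\to\infty$. Since $\Gamma_y=\bigcup_n\Gamma_y^{1/n}$ is an increasing union, the standard continuity of modulus under increasing unions then gives $\Mod\Gamma_y=0$.

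The main obstacle is establishing $\Mod\Gamma'>0$, which rests on Lemma \ref{lemma:curves_continuum}; without it one would have to manufacture by hand a sufficient supply of locally rectifiable curves in $X$ that meet $E_0$ but are not contained in $E$, which is delicate for a general metric surface lacking any a priori Loewner-type condition. All the other steps combine standard modulus techniques with the weak quasiconformality inequality.
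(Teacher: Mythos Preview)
Your proof follows essentially the same route as the paper's: the paper also deduces injectivity from Lemma \ref{lemma:curves_continuum} together with the weak quasiconformality inequality, and then invokes invariance of domain; for the sufficient condition, the paper builds the same kind of admissible function supported on $N$ nested dyadic annuli to get $\Mod\Gamma_\delta\leq 4k/N$.

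Two small imprecisions are worth noting. First, Lemma \ref{lemma:curves_continuum} concerns curves joining two \emph{disjoint} non-trivial continua, not curves meeting a single continuum; to apply it to your family $\Gamma'$ you must first produce a non-trivial continuum $F\subset X\setminus E$ (this exists since $h$ is non-constant and $X$ is a surface) and observe that the family $\Gamma(E_0,F)$ of curves connecting $E_0$ and $F$ is a subfamily of $\Gamma'$, whence $\Mod\Gamma'\geq\Mod\Gamma(E_0,F)>0$. The paper makes exactly this move explicitly. Second, in the last line you invoke ``continuity of modulus under increasing unions'', which is not a general fact; what you actually need (and what the paper uses) is countable subadditivity: $\Mod\Gamma_y\leq\sum_n\Mod\Gamma_y^{1/n}=0$.
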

This result follows immediately from Lemma \ref{lemma:injective} below. 

\subsection{Proof of Theorem \ref{theorem:qc_definitions}}

In what follows, we assume that $X$ and $Y$ are metric surfaces with locally finite Hausdorff $2$-measure. We freely use the property that the Hausdorff $2$-measure is an outer regular Borel measure \cite[Section 2.10, p.~171]{Fed:69}. Moreover, recall that topological surfaces are second countable and separable and admit a compact exhaustion. Thus, the Hausdorff $2$-measure is $\sigma$-finite if it is locally finite. Let $B(Y)$ be the set of points $y\in Y$ such that
\begin{align*}
    \lim_{r\to 0} \frac{\mathcal H^2(B(y,r))}{r^2}=\infty.
\end{align*} 

\begin{lemm}\label{lemma:borel_measure_zero}
The set $B(Y)$ is Borel measurable and has Hausdorff $2$-measure zero. 
\end{lemm}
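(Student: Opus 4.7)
The plan is to establish Borel measurability via a lower semicontinuity argument combined with a reduction to rational radii, and to derive $\mathcal{H}^2(B(Y)) = 0$ by a standard 5B-covering (Vitali-type) estimate. Note first that, since $\limsup$ always dominates $\liminf$, $y \in B(Y)$ if and only if $\liminf_{r \to 0^+} \mathcal{H}^2(B(y, r))/r^2 = \infty$.

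For Borel measurability, I would first observe that for each fixed $r > 0$ the map $y \mapsto \mathcal{H}^2(B(y, r))$ is lower semicontinuous on $Y$: Fatou's lemma applied to $\chi_{B(y_n, r)}$, which dominates $\chi_{B(y, r)}$ in the pointwise liminf whenever $y_n \to y$, gives this immediately. Next, since $r \mapsto \mathcal{H}^2(B(y, r))$ is non-decreasing, picking for each small real $r > 0$ a rational $q \in (r/2, r]$ yields the comparison $\mathcal{H}^2(B(y, r))/r^2 \geq \mathcal{H}^2(B(y, q))/(4 q^2)$, so the liminf condition along all $r > 0$ is equivalent to the same condition taken along rational $r$. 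Combining these observations expresses
\[ B(Y) = \bigcap_{n \geq 1} \bigcup_{m \geq 1} \bigcap_{q \in \mathbb{Q} \cap (0, 1/m)} \bigl\{ y \in Y : \mathcal{H}^2(B(y, q)) > n q^2 \bigr\}, \]
a countable combination of open sets, hence Borel.

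For the measure-zero statement, since $Y$ is $\sigma$-compact with locally finite $\mathcal{H}^2$, it suffices to show $\mathcal{H}^2(B(Y) \cap K) = 0$ for every compact $K \subset Y$ that has a neighborhood of finite $\mathcal{H}^2$-measure. Fix such $K$, an integer $n \geq 1$, and a small $\epsilon > 0$. For each $y \in B(Y) \cap K$ choose $r_y \in (0, \epsilon)$ with $\mathcal{H}^2(B(y, r_y)) > n r_y^2$. Apply the 5B-covering lemma (valid in any metric space, producing a countable subfamily since $Y$ is separable) to extract pairwise disjoint balls $B(y_i, r_i)$ whose 5-enlargements cover $B(Y) \cap K$. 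Disjointness together with $B(y_i, r_i) \subset N_\epsilon(K)$ and the density witness $r_i^2 < \mathcal{H}^2(B(y_i, r_i))/n$ controls the Hausdorff $10\epsilon$-content:
\[ \mathcal{H}^2_{10 \epsilon}(B(Y) \cap K) \;\leq\; 100 \, C(2) \sum_i r_i^2 \;\leq\; \frac{100 \, C(2)}{n} \, \mathcal{H}^2(N_\epsilon(K)). \]
Letting $\epsilon \to 0$, so that $\mathcal{H}^2(N_\epsilon(K)) \to \mathcal{H}^2(K)$ by downward continuity (valid by local finiteness on a neighborhood of $K$), and then $n \to \infty$, yields $\mathcal{H}^2(B(Y) \cap K) = 0$. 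The main technical step is the $5B$-covering reduction; everything else is routine measure-theoretic bookkeeping.
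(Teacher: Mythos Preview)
Your proof is correct and essentially parallels the paper's. For Borel measurability, both you and the paper use lower semicontinuity of $y\mapsto \mathcal H^2(B(y,r))$ (via Fatou) together with a reduction to rational radii; the paper invokes left-continuity in $r$ while you use the monotonicity sandwich $q\in(r/2,r]$, but these are interchangeable technicalities leading to the same countable Borel expression. For the measure-zero claim, the paper simply cites Federer's density theorem \cite[2.10.19(5)]{Fed:69}, whereas you supply the underlying $5B$-covering argument directly; your version is more self-contained but proves exactly the same density bound that Federer records. One minor remark: you do not actually need the convergence $\mathcal H^2(N_\epsilon(K))\to\mathcal H^2(K)$; a uniform bound $\mathcal H^2(N_\epsilon(K))\le \mathcal H^2(U)<\infty$ for small $\epsilon$ already suffices before sending $n\to\infty$.
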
 
\begin{proof}
The fact that $B(Y)$ has measure zero follows from \cite[2.10.19 (5), p.~181]{Fed:69}, which implies that there exists a uniform constant $C>0$ such that 
\begin{align*}
    \limsup_{r\to 0} \frac{\mathcal H^2(B(y,r))}{r^2}\leq C
\end{align*}
for a.e.\ $y\in Y$.

We prove the measurability statement.  For fixed $r>0$ the function $y\mapsto \mathcal H^2(B(y,r))$ is lower semi-continuous, thus Borel measurable. Indeed, by Fatou's lemma, whenever $y_n\to y$, we have
$$\mathcal H^2(B(y,r))= \int \chi_{B(y,r)} \, d\mathcal H^2 \leq \liminf_{n\to\infty} \int \chi_{B(y_n,r)} \, d\mathcal H^2.$$
Moreover, by the monotone convergence theorem we see that for fixed $y\in Y$ the function $r\mapsto\mathcal H^2(B(y,r))$ is left-continuous. We conclude that the function $f(y,r)=r^{-2}\mathcal H(B(y,r))$, $y\in Y$, $r>0$, is Borel measurable in $y$ and left-continuous in $r$. It now follows that the set $B(Y)=\{y\in Y: \lim_{r\to 0} f(y,r)=\infty\}$ is Borel measurable by writing
\begin{align*}
    B(Y)= \bigcap_{k=1}^\infty \bigcup_{n=1}^\infty \bigcap_{r\in (0,1/n)\cap \mathbb Q}\{y\in Y: f(y,r)>k\}.
\end{align*}
This completes the proof.
\end{proof}

We denote by $C(Y)$ the set of points of $y\in Y$ such that the modulus of the family of non-constant curves passing through $y$ is positive.

\begin{lemm}\label{lemma:curves_point}
We have $C(Y)\subset B(Y)$.
\end{lemm}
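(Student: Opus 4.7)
The plan is to prove the contrapositive: if $y \notin B(Y)$, that is, if $\liminf_{r \to 0} \mathcal{H}^2(B(y,r))/r^2 < \infty$, then the family $\Gamma_y$ of non-constant rectifiable curves passing through $y$ has zero $2$-modulus. The strategy is standard: construct a sequence of admissible functions for $\Gamma_y$ whose $L^2$-norms tend to zero, using the bound on $\mathcal{H}^2$ on a sequence of small balls around $y$.

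First I would fix a constant $C>0$ and a sequence $r_n \to 0$ with $\mathcal{H}^2(B(y,r_n)) \leq C r_n^2$ for every $n \in \mathbb{N}$. After passing to a subsequence, I may assume that $r_{n+1} \leq r_n/2$, so that the annular shells $A_n = B(y, r_n) \setminus B(y, r_{n+1})$ are pairwise disjoint.

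The main geometric observation is that for every non-constant locally rectifiable curve $\gamma \in \Gamma_y$, there exists $n_0 = n_0(\gamma) \in \mathbb{N}$ such that for all $n \geq n_0$ the trace $|\gamma|$ meets a point at distance greater than $r_n$ from $y$. Consequently, $\gamma$ contains a subcurve going from the sphere of radius $r_{n+1}$ to the sphere of radius $r_n$, which lies in $\overline{A_n}$ and has length at least $r_n - r_{n+1} \geq r_n/2$. This yields the pointwise lower bound
\[
\int_\gamma \frac{1}{r_n} \chi_{A_n} \, ds \geq \frac{1}{2} \qquad \text{for every } n \geq n_0(\gamma).
\]
For each $N \in \mathbb{N}$ I would then define the Borel function
\[
\rho_N = \sum_{n=N}^{\infty} \frac{1}{n \, r_n}\, \chi_{A_n}.
\]
By the previous bound and the divergence of the harmonic series, $\int_\gamma \rho_N \, ds = \infty$ for every $\gamma \in \Gamma_y$, so $\rho_N$ is admissible for $\Gamma_y$.

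Finally, using the disjointness of the $A_n$ and the measure bound on $B(y, r_n)$, I would compute
\[
\int_Y \rho_N^2 \, d\mathcal{H}^2 = \sum_{n=N}^{\infty} \frac{1}{n^2 r_n^2}\, \mathcal{H}^2(A_n) \leq \sum_{n=N}^{\infty} \frac{1}{n^2 r_n^2} \cdot C r_n^2 = C \sum_{n=N}^{\infty} \frac{1}{n^2},
\]
which tends to zero as $N \to \infty$. Hence $\Mod \Gamma_y = 0$ and $y \notin C(Y)$, as required. I do not expect a real obstacle here; the only point needing slight care is the measurability and admissibility checks for $\rho_N$, which are routine given that the annuli $A_n$ are Borel and the subcurve-in-annulus argument uses only the lower semicontinuity of distance along a rectifiable path.
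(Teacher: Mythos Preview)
Your argument is correct and follows the same core idea as the paper: exploit a sequence of good radii where $\mathcal{H}^2(B(y,r)) \leq Cr^2$ to build test functions supported on disjoint annular shells and show the modulus vanishes. The only packaging difference is that the paper first truncates to the subfamily $\Gamma_\delta$ of curves with diameter at least $\delta$ and uses $N$ annuli with constant weights to get $\Mod\Gamma_\delta \leq 4k/N$, whereas you use infinitely many annuli with harmonic weights $1/(n r_n)$ so that $\int_\gamma \rho_N = \infty$ for every non-constant curve at once; your variant spares the decomposition by diameter, while the paper's variant avoids the infinite series, but both are standard and equally valid.
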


\begin{proof}
We show that for each $y\in Y\setminus B(Y)$, the modulus of the family of non-constant curves passing through $y$ is zero. {For each $\delta>0$, let $\Gamma_\delta$ denote the family of curves passing through $y$ with diameter bounded below by $\delta$. } By the subadditivity of modulus, it suffices to show the conclusion for {the family $\Gamma_\delta$ for all $\delta>0$.} 

Since $y\in Y\setminus B(Y)$, there exists $k>0$ such that $\mathcal H^2(B(y,r)) \leq kr^2$ for a sequence of arbitrarily small $r>0$. We also fix $N\in \N$.  Let $R_1<\delta/2$ be a radius such that $\mathcal H^2(B(y,R_1))\leq k R_1^2$, and let $r_1\coloneqq R_1/2$. In the annulus $A_1=A(y;r_1,R_1)\coloneqq \{ x\in Y:  r_1<d(x,y)<R_1\}$, we set $\rho =N^{-1}(R_1-r_1)^{-1}$. Now, consider $R_2<r_1$ so small that $\mathcal H^2(B(y,R_2))\leq k R_2^2$, define $r_2\coloneqq R_2/2$, and set $\rho=N^{-1}(R_2-r_2)^{-1}$ in the annulus $A_2=A(y;r_2,R_2)$. We repeat this procedure $N$ times, until we obtain a last annulus $A_N=A(y;r_N,R_N)$. We set $\rho=0$ outside the union of these annuli. 

Note that $\rho$ is an admissible function for $\Gamma_{\delta}$. Indeed, any curve $\gamma\in \Gamma_{\delta}$ connects $y$ to $\partial B(y,R_1)$, since $\diam(|\gamma|)\geq \delta>2R_1$. Thus, $\gamma$ intersects all annuli $A_i$, $i\in \{1,\dots,N\}$, with
$$\int_{\gamma}\chi_{A_i} \, ds \geq R_i-r_i.$$
for each $i\in \{1,\dots,N\}$. This implies admissibility. We now have
	\begin{align*}
		\Mod\Gamma_{\delta} \leq \int \rho^2 \, d\mathcal H^2 = \frac{1}{N^2} \sum_{j=1}^N \frac{\mathcal H^2(A_j)}{(R_j-r_j)^2}\leq \frac{k}{N^2} \sum_{j=1}^N \frac{R_j^2}{R_j^2/4}= \frac{4k}{N}.
	\end{align*}
This converges to $0$ as $N\to \infty$, completing the proof.
\end{proof}

\begin{lemm}\label{lemma:curves_continuum}
Suppose that $E,F\subset X$ are disjoint, non-trivial continua. Then the modulus of {the family of} curves connecting $E$ and $F$ is positive. 
\end{lemm}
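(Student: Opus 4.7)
The plan is to construct a closed topological disk $D \subset X$ whose boundary $\partial D$ contains non-trivial closed arcs $\alpha_E \subset E$ and $\alpha_F \subset F$ on opposite sides, and then to apply a coarea argument based on the Besicovitch inequality (Theorem \ref{thm:besicovitch}) to lower-bound the $2$-modulus of the family of curves in $D$ joining $\alpha_E$ to $\alpha_F$, which is a subfamily of the family of curves in $X$ connecting $E$ to $F$.

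To construct $D$: since $X$ is a connected surface, there is a simple arc $\eta \subset X$ from some $e \in E$ to some $f \in F$. A neighborhood of $|\eta|$ is homeomorphic to an open topological rectangle, by covering $|\eta|$ with a finite chain of coordinate charts and gluing; taking its closure yields a closed topological disk $D_0 \subset X$. Using that $E$ and $F$ are non-trivial continua, together with the topological fact that every closed connected proper subset of a Jordan curve is a closed arc, one may modify $D_0$ near $e$ and $f$ to produce a closed disk $D$ whose boundary contains a non-trivial closed subarc $\alpha_E \subset \partial D \cap E$ through $e$ and a non-trivial closed subarc $\alpha_F \subset \partial D \cap F$ through $f$, lying on opposite sides of $\partial D$.

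For the modulus estimate, I would construct a Lipschitz map $\phi \colon \partial D \to \partial([0,1]^2)$ of non-zero topological degree sending $\alpha_E$ onto $\{0\} \times [0,1]$ and $\alpha_F$ onto $\{1\} \times [0,1]$, with Lipschitz constant $L$ controlled by the length of $\partial D$. Extend to an $L$-Lipschitz map $\Phi = (\Phi_1, \Phi_2) \colon D \to \mathbb{R}^2$ by the McShane--Whitney extension theorem as in the proof of Theorem \ref{thm:besicovitch}; in particular $\Phi(D) \supset [0,1]^2$. The Eilenberg coarea inequality applied to $\Phi_2 \colon D \to \mathbb{R}$ yields
\[
\int_0^1 \mathcal{H}^1(\Phi_2^{-1}(t) \cap D) \, dt \leq CL\, \mathcal{H}^2(D) < \infty,
\]
so for a.e.\ $t \in [0,1]$ the fiber $\Phi_2^{-1}(t) \cap D$ has finite $\mathcal{H}^1$-measure. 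For each such $t$, the non-zero degree of $\Phi|_{\partial D}$ forces this fiber to contain a continuum meeting both $\alpha_E$ and $\alpha_F$, which, having finite $\mathcal{H}^1$-measure, contains a rectifiable arc $\gamma_t$ from $\alpha_E$ to $\alpha_F$---hence a curve in the family of curves connecting $E$ and $F$ in $X$. For any admissible $\rho$ for this family, $\int_{\gamma_t} \rho \, ds \geq 1$, so by Cauchy--Schwarz $\int_{\gamma_t} \rho^2 \, ds \geq 1/\mathcal{H}^1(\gamma_t)$. Integrating in $t$, applying the coarea bound and a second Cauchy--Schwarz in the measure $dt$, one obtains $\int_D \rho^2 \, d\mathcal{H}^2 \geq c > 0$ for a constant $c$ depending only on $D$ and $L$, hence the modulus is positive.

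The main obstacle is the disk construction with non-trivial boundary arcs: a priori, $E \cap \partial D$ could collapse to a single point (for instance when $e$ is a cut point of $E$), so that $\alpha_E$ would be trivial. The remedy is to make the construction generic, varying the radii of the coordinate disks around $e$ and $f$ and using that a continuum has at most countably many cut points, so that for all but countably many radii the boundary-trace is a non-degenerate subcontinuum---which is then automatically a closed arc by the topology of the circle.
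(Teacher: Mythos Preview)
Your coarea-plus-topology strategy is in the same spirit as the paper's, but the paper executes it more directly, and your disk construction has a genuine gap.

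The paper's argument (sketched, citing \cite[Proposition~3.5]{Raj:17}): fix a curve $\gamma$ joining $E$ to $F$, and set $u(x)=d(|\gamma|,x)$. This is $1$-Lipschitz, so the Eilenberg coarea inequality gives $\mathcal{H}^1(u^{-1}(t))<\infty$ for a.e.\ $t$. After trimming $\gamma$ so that it meets $E$ and $F$ only at its endpoints, non-triviality of $E$ and $F$ forces each to contain points at every small positive $u$-value; hence for a.e.\ small $t>0$ the level set $u^{-1}(t)$ meets both $E$ and $F$, and a topological argument (using the local planarity of $X$) produces a rectifiable subarc of $u^{-1}(t)$ from $E$ to $F$. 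The modulus lower bound then follows exactly as in your final paragraph.

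The gap in your version is the construction of the arcs $\alpha_E\subset E$ and $\alpha_F\subset F$. First, the assertion that ``a continuum has at most countably many cut points'' is false: every interior point of an arc is a cut point. Second, your radius-varying fix does not yield a non-degenerate boundary trace in general; if $E$ is a straight segment emanating from $e$ in a coordinate chart, then $E\cap S_r$ is a single point for \emph{every} small $r$. Most seriously, a non-trivial continuum in a surface need not contain any arc whatsoever---a pseudo-arc is an explicit planar example---so no modification of $D$ can produce a non-degenerate subarc $\alpha_E\subset\partial D\cap E$. The paper's approach sidesteps this entirely: it never needs arcs inside $E$ or $F$, only that the continuous function $u$ is non-constant on each, which is immediate from non-triviality and connectedness. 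Your subsequent degree/coarea/Cauchy--Schwarz argument is fine once one has a legitimate one-parameter family of rectifiable connectors; the distance function $u$ supplies that family without the problematic disk.
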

This result requires that $X$ is a metric surface and thus has locally Euclidean topology. It can be proved by a slight modification of \cite[Proposition 3.5]{Raj:17}.  The idea is to consider a fixed curve $\gamma$ joining $E$ and $F$ and define the function $u\colon X \to \mathbb{R}$ by $u(x) = d(|\gamma|,x)$. Then there exists $T>0$ such that for almost every $t \in (0,T)$ the level set $u^{-1}(t)$ contains a rectifiable curve joining $E$ and $F$. The family of such curves has positive modulus.

\begin{lemm}\label{lemma:injective}
Suppose that $h\colon X\to Y$ is a continuous, non-constant, and monotone mapping such that 
$$\Mod \Gamma\leq K\Mod h(\Gamma)$$
for each curve family $\Gamma$ in $X$. Then $h$ is injective in $X\setminus h^{-1}(C(Y))$. In particular, $h$ is injective in $X\setminus h^{-1}(B(Y))$.  
\end{lemm}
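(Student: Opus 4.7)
The plan is to argue by contradiction. Suppose $h$ fails to be injective on $X\setminus h^{-1}(C(Y))$, so there exist distinct points $x_1,x_2\in X\setminus h^{-1}(C(Y))$ with $h(x_1)=h(x_2)=y$; necessarily $y\notin C(Y)$. Since $h$ is continuous and monotone, the fiber $E=h^{-1}(y)$ is a closed connected subset of $X$, and it contains both $x_1$ and $x_2$, so it is a non-trivial continuum.

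Next I will manufacture a second non-trivial continuum $F$ disjoint from $E$ such that $y\notin h(F)$. Because $h$ is non-constant, there is some point $x_0\in X$ with $h(x_0)\neq y$; in particular $x_0\notin E$. Using that $Y$ is Hausdorff, choose a neighborhood $V$ of $x_0$ with $h(V)$ disjoint from $\{y\}$, and since $X$ is a $2$-manifold, shrink $V$ so that it contains a small closed coordinate disk $F$ around $x_0$. Then $F$ is a non-trivial continuum satisfying $F\cap E=\emptyset$ and $h(F)\not\ni y$.

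The contradiction will come from weighing two modulus estimates against each other. By \Cref{lemma:curves_continuum}, the family $\Gamma$ of rectifiable curves in $X$ joining $E$ to $F$ has positive modulus. For any $\gamma\in\Gamma$ the image $h\circ\gamma$ has one endpoint in $h(E)=\{y\}$ and the other in $h(F)$, a set that avoids $y$; thus $h\circ\gamma$ is a non-constant curve whose trace contains $y$. Hence $h(\Gamma)$ is contained in the family of non-constant curves passing through $y$, which has modulus zero by the hypothesis $y\notin C(Y)$. The weak quasiconformality inequality then forces
\[\Mod\Gamma\leq K\,\Mod h(\Gamma)=0,\]
contradicting $\Mod\Gamma>0$. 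This settles the first claim. The \emph{in particular} statement follows at once from \Cref{lemma:curves_point}, which gives $C(Y)\subset B(Y)$, hence $X\setminus h^{-1}(B(Y))\subset X\setminus h^{-1}(C(Y))$.

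The only step that requires genuine care is the construction of $F$: the argument breaks down if $h(F)$ happens to contain $y$, since then $h\circ\gamma$ need not be non-constant and would not belong to the modulus-zero family. The $2$-manifold hypothesis and the continuity of $h$ at $x_0$, together with $h(x_0)\neq y$, make this technicality easy to handle, so no substantial difficulty is expected beyond correctly lining up the two modulus bounds.
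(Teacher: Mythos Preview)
Your argument is correct and follows essentially the same route as the paper's proof: exhibit a non-trivial continuum inside the fiber $h^{-1}(y)$ and another one outside it, invoke \Cref{lemma:curves_continuum} to get positive modulus for the connecting family, and then use $y\notin C(Y)$ to force the image family to have modulus zero. The only small slip is that you take $E=h^{-1}(y)$ itself as your continuum, but a continuum is by convention compact, and $h^{-1}(y)$ is only guaranteed to be closed and connected; the paper instead uses local compactness of $X$ to pass to a non-trivial compact connected subset of $h^{-1}(y)$, which is an easy fix.
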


This lemma proves Theorem \ref{theorem:homeomorphism}.  Indeed, the assumption of the theorem implies that $C(Y)=\emptyset$ or $B(Y)=\emptyset$. It follows from Lemma \ref{lemma:injective} that $h$ is injective on $X$. By the invariance of domain theorem, $h$ is a topological embedding.

\begin{proof}
It suffices to show that $h^{-1}(y)$ is a singleton for each $y\in h(X)\setminus C(Y)$. Suppose that $h^{-1}(y)$ contains more than one point. By the monotonicity of $h$, $h^{-1}(y)$ is a closed connected subset of $X$. By assumption, $h$ is non-constant, so $X\setminus h^{-1}(y)$ is a non-empty open set. Since $X$ is a metric surface, there exists a non-trivial continuum $E\subset X\setminus h^{-1}(y)$. Since $X$ is locally compact, $h^{-1}(y)$ contains a non-trivial continuum $F$. Let $\Gamma$ be the family of curves connecting $E$ and $F$. Then $\Mod \Gamma>0$ by Lemma \ref{lemma:curves_continuum}. By assumption, we have $\Mod h(\Gamma) >0$. Note that each curve of $h(\Gamma)$ is a non-constant curve joining $y$ to $h(E)$, and that $h(E)$ does not contain $y$. By the definition of $C(Y)$, we have $\Mod h(\Gamma)=0$, a contradiction. Thus, $h$ is injective in $X\setminus h^{-1}(C(Y))$. By Lemma \ref{lemma:curves_point}, we conclude that $h$ is injective on $X\setminus h^{-1}(B(Y))$.
\end{proof} 

\begin{cor}\label{corollary:measure}
Suppose that $h$ is as in Lemma \ref{lemma:injective}. If $E\subset X$ is a Borel set, then $h(E)\setminus B(Y)$ is a Borel set. Moreover, the set function $\nu(E)= \mathcal H^2(h(E))$ is an outer regular, locally finite, Borel measure on $X$.
\end{cor}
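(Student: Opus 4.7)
Set $X_0 = X \setminus h^{-1}(B(Y))$; this is Borel since $B(Y)$ is Borel (Lemma \ref{lemma:borel_measure_zero}) and $h$ is continuous. By Lemma \ref{lemma:injective}, the restriction $h|_{X_0}$ is injective. The first step in the plan is to observe that for any Borel set $E \subset X$,
\[
h(E) \setminus B(Y) = h(E \cap X_0),
\]
since any $x \in E$ with $h(x) \notin B(Y)$ belongs to $X_0$ by definition of $X_0$. To conclude that this set is Borel in $Y$, I will invoke the Lusin--Souslin theorem: a continuous injection from a Borel subset of a Polish space into a Polish space maps Borel sets to Borel sets. Metric surfaces are locally compact, Hausdorff, and second countable, hence Polish, so the theorem applies to $h|_{X_0}\colon X_0 \to Y$ and the Borel set $E \cap X_0$. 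This yields the first claim, and it also shows that
\[
\nu(E) = \mathcal{H}^2(h(E)) = \mathcal{H}^2(h(E) \setminus B(Y)) = \mathcal{H}^2(h(E \cap X_0))
\]
since $B(Y)$ has $\mathcal{H}^2$-measure zero (Lemma \ref{lemma:borel_measure_zero}).

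For the measure assertions, countable additivity on disjoint Borel sets $\{E_i\}$ is then immediate: since $h|_{X_0}$ is injective, $\{h(E_i \cap X_0)\}$ is a pairwise disjoint family of Borel subsets of $Y$, and
\[
\nu\Bigl(\bigcup_i E_i\Bigr) = \mathcal{H}^2\Bigl(\bigcup_i h(E_i \cap X_0)\Bigr) = \sum_i \mathcal{H}^2(h(E_i \cap X_0)) = \sum_i \nu(E_i).
\]
For local finiteness, each $x \in X$ has a precompact open neighborhood $U$, and $h(\overline{U})$ is then compact in $Y$; since $Y$ has locally finite Hausdorff $2$-measure, a finite subcover argument gives $\nu(U) \leq \mathcal{H}^2(h(\overline{U})) < \infty$. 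For outer regularity, given $\varepsilon > 0$, the outer regularity of $\mathcal{H}^2$ on $Y$ yields an open set $W \supset h(E)$ with $\mathcal{H}^2(W) \leq \nu(E) + \varepsilon$; then $V = h^{-1}(W)$ is open in $X$, contains $E$, and satisfies $h(V) \subset W$, so $\nu(V) \leq \mathcal{H}^2(W) \leq \nu(E) + \varepsilon$.

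The main technical obstacle is the Borel measurability of $h(E) \setminus B(Y)$. Because $h$ is only assumed to be continuous and monotone and not globally injective, one cannot directly claim that the image of a Borel set under $h$ is Borel; continuous images of Borel sets are merely analytic in general. The key input is Lemma \ref{lemma:injective}, which shows that after removing the $\mathcal{H}^2$-null set $B(Y)$ from the target, we reduce to the injective restriction $h|_{X_0}$, to which descriptive set theory (the Lusin--Souslin theorem) applies. Every subsequent property of $\nu$ --- countable additivity, local finiteness, and outer regularity --- then follows from standard measure-theoretic manipulations once this measurability is in place and one uses the fact that $B(Y)$ is $\mathcal{H}^2$-null.
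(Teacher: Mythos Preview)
Your proof is correct and follows essentially the same route as the paper: you reduce to the injective restriction $h|_{X_0}$ via Lemma~\ref{lemma:injective}, invoke the Lusin--Souslin theorem to obtain Borel measurability of $h(E\cap X_0)=h(E)\setminus B(Y)$, and then deduce countable additivity, local finiteness, and outer regularity exactly as the paper does. Your presentation is in fact slightly more explicit (you justify that metric surfaces are Polish and spell out countable additivity), but the argument is the same.
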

\begin{proof}
By Lemma \ref{lemma:borel_measure_zero}, $B(Y)$ is a Borel subset of $Y$. Since $h$ is continuous, $h^{-1}(B(Y))$ is a Borel subset of $X$. By Lemma \ref{lemma:injective}, $h$ is injective on $X\setminus h^{-1}(B(Y))$. By the Lusin--Souslin theorem \cite[Theorem 15.1, p.~89]{Ke:95} it follows that if $E$ is a Borel subset of $X \setminus h^{-1}(B(Y))$, then $h(E)$ is a Borel subset of $Y$. Now, if $E$ is any Borel subset of $X$, then 
$$h(E)\setminus B(Y)=h(E\setminus h^{-1}(B(Y)))=h(E\cap (X\setminus h^{-1}(B(Y)))),$$
which implies that $h(E)\setminus B(Y)$ is a Borel set.

Since $\mathcal H^2$ is a Borel measure on $Y$, it is immediate that the set function $\nu(E)= \mathcal H^2(h(E))$, restricted to $X\setminus h^{-1}(B(Y))$, where $h$ is injective, is a Borel measure. Since $\mathcal H^2(B(Y))=0$, it follows that $\nu$ extends to a Borel measure on $X$. 

Since $h$ is continuous and the Hausdorff $2$-measure of $Y$ is locally finite, it follows that $\nu(E)<\infty$ whenever $E$ is compact. For the outer regularity, recall that $\mathcal H^2$ is outer regular on $Y$. Thus, for any set $E\subset X$, there exists an open set $U$ in $Y$ containing $h(E)$ such that $\mathcal H^2(U)$ approximates $\mathcal H^2(h(E))$. The set $h^{-1}(U)$ is open by continuity and contains $E$. Moreover $\nu(h^{-1}(U))= \mathcal H^2(h(h^{-1}(U)))\leq \mathcal H^2(U)$, so the $\nu$-measure of $h^{-1}(U)$ approximates the $\nu$-measure of $E$, as desired.
\end{proof}

\begin{proof}[Proof of Theorem \ref{theorem:qc_definitions}]
We assume that $h\colon X\to Y$ is a non-constant mapping, otherwise the implications are trivial.

Suppose first that \ref{item:qc_equivalence_i} is true.  Lemma \ref{lemma:weak_upper_gradient_path_integral} implies that for each curve family $\Gamma$ in $X$ we have $\Mod \Gamma\leq K\Mod h(\Gamma)$. By Corollary \ref{corollary:measure}, $\nu=\mathcal H^2\circ h$ is an outer regular, locally finite, Borel measure on $X$. Now, by the Lebesgue--Radon--Nikodym decomposition theorem \cite[Theorem 3.8, p.~91]{Fol:99}, we can write $\nu= J_h d\mathcal H^2+ \nu_s$, where $J_h\in L^1_{\textrm{loc}}(X)$ and $\nu_s$ is a measure singular with respect to $\mathcal H^2$. 

By \ref{item:qc_equivalence_i}, we have
$$\int_{h^{-1}(E)}g_h^2\, d\mathcal H^2 \leq K  \mathcal H^2(E)$$
for each Borel set $E\subset Y$. Since $B(Y)$ has measure zero by Lemma \ref{lemma:borel_measure_zero}, we have $g_h=0$ a.e.\ on $h^{-1}(B(Y))$. Let $E\subset X$ be an arbitrary Borel set. By Corollary \ref{corollary:measure}, $h(E)\setminus B(Y)$ is a Borel set. Thus,
\begin{align*}
    \int_{E}g_h^2\, d\mathcal H^2&= \int_{E\setminus h^{-1}(B(Y))}g_h^2 \, d\mathcal H^2 \leq \int_{h^{-1}( h(E) \setminus B(Y))}g_h^2\, d\mathcal H^2\\&\leq K  \mathcal H^2( h(E)\setminus B(Y)) =K\mathcal H^2(h(E)).
\end{align*}
It follows that 
$$\int_E g_h^2\, d\mathcal H^2\leq  K \int_E J_h\, d\mathcal H^2 + K\nu_s(E).$$
The singularity of $\nu_s$ with respect to $\mathcal H^2$ implies that $g_h^2\leq K J_h$ a.e.\ in $X$ with respect to $\mathcal H^2$, as desired.

Conversely, if $g_h^2\leq KJ_h$, then for every Borel set $E\subset Y$ we have 
\begin{align*}
    \int_{h^{-1}(E)}g_h^2 \, d\mathcal H^2 &\leq K\int_{h^{-1}(E)}J_h\, d\mathcal H^2 \\&\leq K \nu( h^{-1}(E)) = K\mathcal H^2(h(h^{-1}(E)))\leq K \mathcal H^2(E).
\end{align*}
This proves \ref{item:qc_equivalence_i}.
\end{proof}

\bigskip

\section{Examples}\label{sec:examples}
In this section, we present concisely several known examples {illustrating some of the possible behavior of} weakly quasiconformal maps. We then give a detailed example in \Cref{example:distinction} showing that in the non-compact case of Theorem \ref{thm:one-sided_qc} there is no clear distinction between the situations where $\Omega=\C$ and $\Omega=\mathbb D$.  This {example verifies} Proposition \ref{prop:example} in the introduction.

\subsection{Example: Failure of the Lusin $(N)$ property}
{Let $(X,\mu)$, $(Y,\nu)$ be measure spaces. A mapping $f\colon X \to Y$ satisfies the \textit{Lusin $(N)$ property} if $\mu(E)=0$ implies $\nu(f(E)) =0$ for every measurable set $E \subset X$. Every metric surface becomes a measure space by giving it the Hausdorff $2$-measure.} Rajala \cite[Section 17]{Raj:17} proves that there exists a quasiconformal homeomorphism $h$ from a planar domain $\Omega$ onto a length surface $X\subset \R^3$ with $\mathcal H^2(X)<\infty$ such that $h$ maps a Cantor set of $2$-measure zero in $\Omega$ onto a Cantor set of positive Hausdorff $2$-measure in $X$. Thus, we cannot guarantee that the weakly quasiconformal mapping $h$ of Theorem \ref{thm:one-sided_qc} has the Lusin $(N)$ property, even if it is quasiconformal.

\subsection{Example: Collapsing a ball to point}
Let $B$ be a closed ball in $\C$ and consider the metric space $X$ obtained from $\C$ by identifying points in $B$, equipped with the quotient metric. Then $X$ is a length space homeomorphic to $\C$ with locally finite Hausdorff $2$-measure, and the natural projection $P\colon \C \to X$ is weakly $1$-quasiconformal. However, $X$ is not quasiconformally equivalent to any planar subset because there exists a point of $X$, namely the point $P(B)$, such that the modulus of the family non-constant curves passing through that point is positive. On the other hand, in the Euclidean plane, the modulus of non-constant curves passing through a given point is always zero. 

This {space} serves as an example where the Lusin $(N^{-1})$ property fails. That is, a set of positive $2$-measure in $\C$ is mapped by $P$ to a set of $2$-measure zero in $X$. Of course, $P$ is not a homeomorphism. A less trivial example is given in \cite{NR:20}. By inspecting the construction there, it can be shown that there exists a length surface $X\subset \R^3$ (with metric induced by the Euclidean metric) with locally finite Hausdorff $2$-measure such that there exists a weakly quasiconformal homeomorphism $h\colon \C \to X$ with the property  that a set of positive $2$-measure of $\C$ is mapped to a set of $2$-measure zero in $X$. In fact, $h$ is {the restriction of a global quasiconformal homeomorphism of $\mathbb{R}^3$.}

\subsection{Example: Traveling for free in a Cantor set}\label{example:cantor}
Let $\Omega\subset \C$ be a domain and $C$ be a totally disconnected, relatively closed subset of $\Omega$.  We consider the density $\chi_{\Omega\setminus C}$ {on $\Omega$}, which gives rise to a pseudometric on $\Omega$. Namely, 
$$d(x,y)= \inf_{\gamma} \int_{\gamma}\chi_{\Omega\setminus C} \, ds$$
where the infimum is taken over all rectifiable curves in $\Omega$ joining $x$ and $y$. 

\begin{prop}\label{prop:cantor}
The function $d\colon \Omega\times\Omega\to [0,\infty)$ is a metric with the following properties.
\begin{enumerate}[label=\normalfont(\roman*)]
    \item $(\Omega,d)$ is a length space with locally finite Hausdorff $2$-measure.
    \item The identity map $id \colon (\Omega, |\cdot |)\to (\Omega, d)$ is a homeomorphism that is locally $1$-Lipschitz and weakly $1$-quasiconformal on $\Omega$, and locally isometric on $\Omega\setminus C$.
    \item\label{prop:cantor:curve} Let $\gamma \colon [a,b]\to \Omega$ be a curve, denote by $[a_i,b_i]$, $i\in I$, the closures of the components of $\gamma^{-1}(\Omega\setminus C)$ and set $\gamma_i=\gamma|_{[a_i,b_i]}$, $i\in I$. If $\ell_{|\cdot |}(\gamma)<\infty$, then
\begin{align*}
    \ell_d(\gamma) =\sum_{i\in I}\ell_d(\gamma_i)=\sum_{i\in I}\ell_{|\cdot |}(\gamma_i)=\int_{\gamma}\chi_{\Omega\setminus C}\, ds 
\end{align*}
    and $\mathcal H^1(|\gamma|\cap C)=0$. Conversely, if $\mathcal H^1(|\gamma|\cap C)=0$, then 
    \begin{align*}
    \ell_d(\gamma) =\sum_{i\in I}\ell_d(\gamma_i)=\sum_{i\in I}\ell_{|\cdot |}(\gamma_i).
\end{align*}
\end{enumerate}
\end{prop}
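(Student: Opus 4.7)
The plan is to establish the three assertions in an intertwined manner, with the central tools being the existence of small Jordan curves in $\Omega\setminus C$ around each point and a ``Cantor function'' device along rectifiable curves. I would first establish the topological key that every point $p\in\Omega$ has arbitrarily small Jordan curves $J\subset\Omega\setminus C$ enclosing it; this follows from classical planar topology, since a compact totally disconnected subset of a planar annulus does not separate it, so $J$ can be built by approximating a circle around $p$ by a polygonal curve that avoids finitely many small grid squares covering $C$. Each such $J$ is compact in the open set $\Omega\setminus C$ and so admits an open tubular neighborhood $U\subset\Omega\setminus C$ of positive Euclidean width $\eta>0$; any rectifiable curve in $\Omega$ joining points on opposite sides of $J$ therefore satisfies $\int\chi_{\Omega\setminus C}\,ds\geq\eta$. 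This immediately gives $d(x,y)>0$ when $x\neq y$ and the continuity of $id^{-1}$, making $id$ a homeomorphism. The inequality $d\leq|\cdot|$ (local Lipschitz) uses Euclidean segments as test curves, and the local isometry on $\Omega\setminus C$ follows by picking $r>0$ with $B(p,r)\subset\Omega\setminus C$: for $x,y\in B(p,r/4)$, curves staying inside $B(p,r/2)\subset\Omega\setminus C$ have $\int\chi\,ds$ equal to their Euclidean length $\geq|x-y|$, while curves exiting $B(p,r/2)$ have $\int\chi\,ds\geq r/2>|x-y|$, and the Euclidean segment from $x$ to $y$ gives the matching upper bound. Finally, $d\leq|\cdot|$ yields $\mathcal{H}^2_d\leq\mathcal{H}^2_{|\cdot|}$, so $(\Omega,d)$ has locally finite Hausdorff $2$-measure.

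For (iii), I would parametrize $\gamma$ by Euclidean arc length and introduce the ``accumulated charge'' $\Phi\colon[a,b]\to\R$ defined by $\Phi(t)=\int_a^t\chi_{\Omega\setminus C}(\gamma(s))\,ds$. This is $1$-Lipschitz, nondecreasing, with $\Phi'=\chi_{\Omega\setminus C}\circ\gamma$ almost everywhere, and the estimate $d(\gamma(s),\gamma(t))\leq\Phi(t)-\Phi(s)$ is immediate by testing with $\gamma|_{[s,t]}$. Summing over partitions gives $\ell_d(\gamma)\leq\Phi(b)-\Phi(a)=\int_\gamma\chi_{\Omega\setminus C}\,ds$. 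The same estimate factors $\gamma$ as $\gamma=\psi\circ\Phi$ for a $1$-Lipschitz map $\psi\colon[0,\Phi(b)]\to(|\gamma|,d)$; this is well-defined because any continuous curve in the totally disconnected set $C$ is constant, so $\Phi(s)=\Phi(t)$ forces $\gamma(s)=\gamma(t)$. On each $\gamma_i$, the interior image lies in $\Omega\setminus C$, and the local isometry from the previous paragraph combined with a Lebesgue-number argument on compact subintervals of $(a_i,b_i)$ gives $\ell_d(\gamma_i)=\ell_{|\cdot|}(\gamma_i)=b_i-a_i$. Subadditivity of $d$-length then yields $\ell_d(\gamma)\geq\sum_i\ell_d(\gamma_i)=\sum_i(b_i-a_i)=\int_\gamma\chi_{\Omega\setminus C}\,ds$, closing the chain of equalities. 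For $\mathcal{H}^1(|\gamma|\cap C)=0$, note $|\gamma|\cap C=\psi(\Phi(\gamma^{-1}(C)))$; since $\Phi'=0$ on $\gamma^{-1}(C)$ the area formula for the Lipschitz map $\Phi$ gives $\mathcal{L}^1(\Phi(\gamma^{-1}(C)))=0$, and the $1$-Lipschitz property of $\psi$ finishes the claim. The converse direction in (iii) uses the hypothesis $\mathcal{H}^1(|\gamma|\cap C)=0$ directly to run the same length chain without assuming Euclidean rectifiability.

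The length-space property in (i) then follows from (iii): any almost-minimizer in the definition of $d(x,y)$ is a curve whose $d$-length equals its $\int\chi\,ds$, hence close to $d(x,y)$. For the weak $1$-quasiconformality of $id$ in (ii), combine the formula $ds_d=\chi_{\Omega\setminus C}\,ds_{|\cdot|}$ from (iii) with the equality $\mathcal{H}^2_d=\mathcal{H}^2_{|\cdot|}$ on Borel subsets of $\Omega\setminus C$ (obtained from the local isometry by a countable open cover). Given a $d$-admissible density $\rho$ for a curve family $\Gamma$, the truncation $\rho\chi_{\Omega\setminus C}$ is $|\cdot|$-admissible and satisfies
\[\int(\rho\chi_{\Omega\setminus C})^2\,d\mathcal{H}^2_{|\cdot|}=\int_{\Omega\setminus C}\rho^2\,d\mathcal{H}^2_d\leq\int\rho^2\,d\mathcal{H}^2_d,\]
so infimizing gives $\Mod\Gamma\leq\Mod id(\Gamma)$. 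The main delicate step is the ``Cantor function'' factorization $\gamma=\psi\circ\Phi$ and the simultaneous control it provides over $d$-length and over $\mathcal{H}^1(|\gamma|\cap C)$; the topological and measure-theoretic ingredients are otherwise routine.
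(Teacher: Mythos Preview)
Your argument is largely correct and, for part (iii) and for weak quasiconformality, takes a genuinely different route from the paper. Where the paper invokes the area formula $\ell_d(\gamma)=\int_\Omega \#(\gamma^{-1}(x))\,d\mathcal H^1_d(x)$ to handle both the length identity and the vanishing of $\mathcal H^1(|\gamma|\cap C)$, you instead build the monotone ``Cantor function'' $\Phi$ and the factorization $\gamma=\psi\circ\Phi$ with $\psi$ $1$-Lipschitz into $(\Omega,d)$. This is an attractive elementary device: it gives $\ell_d(\gamma)\le \Phi(b)$ and $\mathcal H^1_d(|\gamma|\cap C)=0$ in one stroke, bypassing the measure-theoretic area formula. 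For weak $1$-quasiconformality you also argue directly with admissible densities and the identity $ds_d=\chi_{\Omega\setminus C}\,ds_{|\cdot|}$ along Euclidean-rectifiable curves, whereas the paper appeals to its upper-gradient lemma (their Lemma~2.6) with $g=\chi_{\Omega\setminus C}$. Both approaches give the same constant; yours is more self-contained, the paper's is shorter once that lemma is in hand.

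There is, however, a real gap in your treatment of the \emph{converse} in (iii). Your phrase ``run the same length chain without assuming Euclidean rectifiability'' does not work as stated: the construction of $\Phi(t)=\int_a^t\chi_{\Omega\setminus C}(\gamma(s))\,ds$ presupposes a Euclidean arc-length parametrization, which is unavailable when $\ell_{|\cdot|}(\gamma)=\infty$. What you actually need for the converse is the inequality $\ell_d(\gamma)\le\sum_i\ell_d(\gamma_i)$, and the hypothesis $\mathcal H^1_d(|\gamma|\cap C)=0$ must enter precisely here. The paper handles this cleanly with the area formula in the $d$-metric: splitting $\int_\Omega \#(\gamma^{-1}(x))\,d\mathcal H^1_d$ over $C$ and $\Omega\setminus C$, the $C$-part vanishes by hypothesis and the remainder decomposes as $\sum_i\ell_d(\gamma_i)$. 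If you want to stay within your framework, you could instead set $\Phi(t)=\ell_d(\gamma|_{[a,t]})$ (assuming $\ell_d(\gamma)<\infty$, the other case being trivial), factor $\gamma=\psi\circ\Phi$ with $\psi$ a $1$-Lipschitz map from $[0,\ell_d(\gamma)]$ into $(\Omega,d)$, and then use $\mathcal H^1_d(|\gamma|\cap C)=0$ together with the Lipschitz property to show $\Phi$ is constant on each component of $\gamma^{-1}(C)$; but this still requires an argument tying $\mathcal H^1_d$ on the image to increments of $\Phi$, which is essentially the area formula in disguise. Either way, the converse needs more than a reference back to the forward chain.
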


If $C$ has positive area, then by following the argument of \cite[Example 2.1]{Raj:17},
it can be shown that $(\Omega,d)$ is not quasiconformally equivalent to any planar domain. We do not provide the details of the general case here, but in Example \ref{example:distinction} we give a specific example $(\Omega,d)$ with the same property. {The general question of when constructions of this type yield a surface quasiconformally equivalent to a planar domain has been investigated in \cite{IR:20}.}

\begin{proof}
We first show that $d$ is a metric that is topologically equivalent to the Euclidean metric. Let $x,y\in \Omega$. We trivially have $d(x,y)\leq |x-y|$ if the line segment between $x$ and $y$ is contained in $\Omega$. On the other hand, if $x$ and $y$ are distinct points, and $x\notin C $ or $y\notin C$, then $d(x,y)\geq \max\{\dist_{|\cdot |}(x,C), \dist_{|\cdot |}(y,C)\}>0$. If $x,y\in C$, then there exists a topological annulus $A \subset \Omega\setminus C$ separating $x$ from $y$ \cite[Corollary 3.11, p.~35]{Wh:64}. Hence $d(x,y)$ is bounded below by the distance of the boundary components of the annulus $A$. Thus, $d(x,y)>0$. If $\{x_n\}_{n\in \N}$ is a sequence in $\Omega$ with $d(x_n,x)\to 0$ for some $x\in \Omega$, then for any given annulus $A\subset \Omega\setminus C$ and all sufficiently large $n\in \N$, $x_n$ cannot be separated from $x$ by $A$. In fact, by the result referenced above, we can consider arbitrarily small such annuli surrounding $x$. We conclude that $x_n$ converges to $x$ in the Euclidean metric. This completes the proof of the topological equivalence of $d$ with the Euclidean metric. To summarize, $(\Omega , d)$ is a metric space such that the identity map $id \colon (\Omega, |\cdot |)\to (\Omega, d)$ is a locally $1$-Lipschitz homeomorphism. This also implies that $(\Omega, d)$ has locally finite Hausdorff $2$-measure. Moreover, $d$ is by definition locally isometric on $\Omega\setminus C$ to the Euclidean metric. 

Next, we show that the identity map $id \colon (\Omega, |\cdot |)\to (\Omega, d)$ is weakly $1$-quasiconformal. The function  $g=\chi_{\Omega\setminus C}$ is trivially an upper gradient of $id$ and for any Borel set $E\subset \Omega$ we have 
$$\int_E g^2 \, d\mathcal H^2_{d} =\mathcal H^2_{d}(E\setminus C) \leq \mathcal H^2_{d}(E)  \leq \mathcal H^2_{|\cdot |}(E),$$
since $id$ is locally $1$-Lipschitz. We now employ Lemma \ref{lemma:weak_upper_gradient_path_integral} \ref{lemma:weak_upper_gradient_measure_ineq}, which implies that $id$ is weakly $1$-quasiconformal.

It remains to show \ref{prop:cantor:curve}, which also implies that $d$ is a length metric. First, suppose that $\gamma$ is rectifiable with respect to the Euclidean metric. Then
\begin{align*}
    \ell_d(\gamma) \geq \sum_{i\in I}\ell_d(\gamma_i)=\sum_{i\in I}\ell_{|\cdot |}(\gamma_i)= \int_{\gamma}\chi_{\Omega\setminus C}\, ds.
\end{align*}
For the reverse inequality, let $a=t_0<t_1<\dots<t_n=b$ be a partition of $[a,b]$, and note that by the definition of $d$ we have
\begin{align*}
    \sum_{j=1}^n d(\gamma(t_{j-1}),\gamma(t_j)) \leq \sum_{j=1}^n \int_{\gamma|_{[t_{j-1}, t_j]}} \chi_{\Omega\setminus C}\, ds = \int_{\gamma} \chi_{\Omega\setminus C}\, ds.
\end{align*}
This shows the first part of \ref{prop:cantor:curve}.  

Next, we recall the area formula
\begin{align*}
    \ell_d(\gamma)= \int_{\Omega} \mathcal \# ( \gamma^{-1}(x)) \, d\mathcal H^1_d(x),
\end{align*}
where $\# (A)$ denotes the cardinality of the set $A$. See \cite[Theorem 2.10.13, p.~177]{Fed:69} for a proof. In general, we have
\begin{align*}
    &\int_{\Omega} \mathcal \# ( \gamma^{-1}(x)) \, d\mathcal H^1_d(x)\\
    &\qquad\qquad= \sum_{i\in I }\int_{\Omega\setminus C} \mathcal \# ( \gamma^{-1}(x) \cap (a_i,b_i)) \, d\mathcal H^1_d(x) +\int_{C} \mathcal \# ( \gamma^{-1}(x)) \, d\mathcal H^1_d(x)\\
    &\qquad\qquad= \sum_{i\in I } \ell_{d}(\gamma_i) +\int_{C} \mathcal \# ( \gamma^{-1}(x)) \, d\mathcal H^1_d(x).
\end{align*}
If $\ell_{|\cdot|}(\gamma)<\infty$, then the left-hand side is finite and equal to $ \sum_{i\in I } \ell_{d}(\gamma_i)$ by the previous, so $\int_{C} \mathcal \# ( \gamma^{-1}(x)) \, d\mathcal H^1_d(x)=0$, which is equivalent to $\mathcal H^1(|\gamma|\cap C)=0$. Conversely, if $\mathcal H^1(|\gamma|\cap C)=0$, then by the area formula we obtain that $\ell_d(\gamma)= \sum_{i\in I} \ell_d(\gamma_i)=\sum_{i\in I} \ell_{|\cdot |}(\gamma_i)$.
\end{proof}

\subsection{Example: No distinction between the plane and the disk}\label{example:distinction}
We show that there exists a length surface $X$ homeomorphic to $\C$ with locally finite Hausdorff $2$-measure such that there exist two weakly $1$-quasiconformal maps, one from $\mathbb D$ onto $X$ and one from $\C$ onto $X$. This proves \Cref{prop:example}. {See Example 6.2 in \cite{CR:20} for a similar construction in the context of Plateau's problem for metric spaces.}

In fact, such a space $X$ cannot be quasiconformally mapped to a planar domain. Indeed, suppose there were a quasiconformal map from $X$ onto a planar domain $\Omega$. By postcomposing with a conformal map, we may assume that $\Omega=\mathbb D$ or $\Omega=\C$. Since there exist weakly quasiconformal maps from $\mathbb D$ onto $X$ and from $\C$ onto $X$, we obtain a weakly quasiconformal map $f$ from $\mathbb D$ onto $\C$ or from $\C$ onto $\mathbb D$. In fact, Theorem \ref{theorem:homeomorphism} implies that $f$ is a homeomorphism. It is {well-known} that a weakly quasiconformal homeomorphism between planar domains is quasiconformal. Specifically, by definition, a quasiconformal homeomorphism between Euclidean domains is a priori required to satisfy only one modulus inequality; see \cite[Section 3]{LV:73}. Thus, we obtain a contradiction by Liouville's theorem {for quasiconformal mappings \cite[Theorem 17.4]{Vai:71}}.

Next, we describe the construction of the space $X$, which relies on the next lemma.

\begin{lemm}\label{lemma:nonremovable}
There exists a totally disconnected, closed set $C\subset \mathbb C$ that is contained in the the union of countably many rectifiable curves, a domain $V\subset \mathbb D$ one of whose boundary components is $\partial \mathbb D$, and a conformal map $g$ from $V$ onto the domain $U=\mathbb C\setminus C$ such that $g(z) \to \infty $ as $z\to \partial \mathbb D$.
\end{lemm}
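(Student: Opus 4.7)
The plan is to construct $V$ directly as a specific subdomain of $\mathbb{D}$ and then obtain $g$ and $C$ from $V$ by a uniformization-type argument applied to finitely connected approximations.

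First, I would choose $V \subset \mathbb{D}$ of the form $V = \mathbb{D} \setminus P$, where $P$ is a closed, totally disconnected subset of $\mathbb{D}$ accumulating on $\partial \mathbb{D}$ and contained in a countable family of pairwise disjoint rectifiable arcs $\gamma_k \subset \mathbb{D}$ (for instance, small circles centered at points $z_k$ with $|z_k| \to 1$ and rapidly shrinking radii). On each $\gamma_k$, I would select a perfect Cantor-type subset $P_k$ of positive $\mathcal H^1$-measure, and set $P = \bigcup_k P_k$. By construction, $\partial \mathbb{D}$ is a boundary component of $V$, and $P$ is totally disconnected, closed, and contained in $\bigcup_k \gamma_k$, a countable union of rectifiable curves.

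Second, I would produce the conformal map $g$ by uniformizing $V$ as a planar Riemann surface. Approximate $V$ from inside by finitely connected domains $\widetilde V_n = \mathbb{D} \setminus \bigcup_{p\in F_n} \overline{D(p,\delta_{n,p})}$, where $F_n \subset P$ is an increasing sequence of finite sets with $\bigcup_n F_n$ dense in $P$, and $\delta_{n,p}>0$ are small ``thickening'' radii. By the classical Koebe uniformization theorem, each $\widetilde V_n$ admits a conformal map $\widetilde g_n$ onto an unbounded planar circle domain $\widetilde U_n$; by M\"obius freedom I normalize so that $\widetilde g_n(z) \to \infty$ as $z \to \partial \mathbb{D}$ and so that two interior marked points of $V$ are sent to fixed targets. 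A normal-family argument based on the Koebe distortion theorem together with this three-point normalization yields a subsequential locally uniform limit $g\colon V \to U$ which is conformal and satisfies $g(z) \to \infty$ as $z\to \partial \mathbb{D}$; set $C = \mathbb{C} \setminus U$. The rectifiability of $C$ follows by transferring the rectifiable structure of the $\gamma_k$ through $g$ using local Koebe distortion estimates.

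The main obstacle is verifying that each boundary component of $V$ arising from a point $p \in P$ produces a \emph{degenerate} boundary component of $U$, i.e., a single point of $C$ rather than a non-trivial Jordan curve; equivalently, that the image circles $\widetilde g_n(\partial D(p,\delta_{n,p}))$ collapse to points in the limit. This is a delicate balancing problem: the sizes $\delta_{n,p}$ must be chosen small enough in terms of the local geometry of $P$ near $p$ to force the conformal modulus of a suitable annulus around $p$ in $\widetilde V_n$ to diverge, yet the Cantor sets $P_k$ must simultaneously be thick enough on each $\gamma_k$ so that the global conformal type of $V$ remains non-trivial in the limit, which is what forces $\infty$ to remain a non-degenerate boundary component of $U$. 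A careful selection of the $\delta_{n,p}$ adapted to the local density of $P$ at each $p$, combined with a diagonal compactness argument, should complete the proof.
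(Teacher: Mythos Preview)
Your proposal has a genuine gap at precisely the point you flag as the ``main obstacle.'' Showing that the boundary components of $U$ corresponding to points of $P$ are all degenerate, while the component corresponding to $\partial\mathbb D$ collapses to the single point $\infty$, is not a technicality to be handled by a careful choice of radii and a diagonal argument---it is the entire substance of the lemma. At the finite approximation stage your normalization is already problematic: for a finitely connected domain $\widetilde V_n$, a conformal map to a circle domain sends the nondegenerate boundary continuum $\partial\mathbb D$ to a genuine circle $\Gamma_n$, never to the point $\infty$; thus one cannot arrange $\widetilde g_n(z)\to\infty$ as $z\to\partial\mathbb D$ at any finite stage. What you would actually need is that the radii of the $\Gamma_n$ tend to infinity under a suitable interior normalization, and this is exactly the statement that the limiting complement $C\cup\{\infty\}$ is \emph{not} removable for conformal maps. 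You have assumed what must be proved. Your claim that rectifiability of $C$ can be ``transferred through $g$ using local Koebe distortion estimates'' is also unjustified: Koebe distortion controls $g$ only on compact subsets of $V$, not near $\partial V$, so it gives no a priori bound on the $\mathcal H^1$-measure of $C$.

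The paper's argument avoids all of this by reversing the logic. Rather than building $V$ and hoping the image complement is totally disconnected, it starts from a classical fact of Ahlfors--Beurling: there exist linear Cantor sets $C_0\subset\mathbb R$ that are not removable for conformal maps, meaning there is a non-M\"obius conformal homeomorphism $f_0\colon\mathbb C\setminus C_0\to V_0$. A short removability argument (using that $C_0$ has finite length) forces at least one boundary component $E$ of $V_0$ to be nondegenerate. One then takes $\psi\colon\widehat{\mathbb C}\setminus E\to\mathbb D$ conformal, sets $V=\psi(V_0)$, and moves the point of $C_0$ corresponding to $E$ to $\infty$ by a M\"obius map $\varphi$; the desired map is $g=\varphi\circ f_0^{-1}\circ\psi^{-1}$. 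The set $C=\varphi(C_0)\setminus\{\infty\}$ is automatically totally disconnected and lies on two rays, hence in countably many rectifiable curves. The existence question you struggle with is thus outsourced to a known theorem, and the remaining steps are elementary normalizations.
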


For the proof we recall the general fact that a homeomorphism $f\colon U\to V$ between domains $U,V\subset \widehat{\C}$ extends to a bijection $f^*$ between the boundary components of $U$ and the boundary components of $V$. Namely, if $B$ is a boundary component of $U$, then $f^*(B)$ is precisely the boundary component $B^*$ of $V$ with the property that $f(z_n)$ accumulates at $B^*$ whenever $\{z_n\}_{n\in \N}$ is a sequence in $U$ accumulating at $B$. Moreover, $f^{-1}$ extends analogously to a bijection $(f^{-1})^*$ between the boundary components of $V$ and $U$ and has the property $(f^{-1})^*=(f^*)^{-1}$. See \cite[Proposition 3.1]{NY:20} for a proof of these standard facts.

\begin{proof}
Let $C_0\subset \R$ be a linear Cantor set that is not removable for conformal maps. Such sets have been studied by Ahlfors and Beurling \cite{AB:50}. By definition, there exists a non-M\"obius conformal homeomorphism $f_0$ from $U_0={\C}\setminus C_0$ onto a domain $V_0$ in ${\C}$. 

If all boundary components of $V_0$ are points, then in fact $f_0$ extends to a homeomorphism from ${\C}$ onto ${\C}$ that is conformal on ${\C}\setminus C_0$. Since $C_0$ has finite length, it follows that $C_0$ is removable for continuous analytic functions \cite[Theorem 2]{Bes:31} and thus $f_0$ extends conformally to ${\C}$; alternatively one can argue using the fact that $C_0$ is removable for conformal homeomorphisms \cite[Theorem 35.1]{Vai:71}. Hence $f_0$ is a M\"obius transformation, a contradiction.

Therefore, there exists a boundary component of $V_0$ that is a non-de\-generate continuum $E$. We consider a conformal map $\psi$ from the simply connected domain $\widehat{\C}\setminus E$ onto the unit disk $\mathbb D$. We set $V=\psi (V_0)$. Note that $\partial \mathbb D$ is a boundary component of $V$ that corresponds to $E$.

Next, consider a M\"obius transformation $\varphi$ of $\widehat\C$ that maps the boundary point $(f^*_0)^{-1}(E)$ of $U_0$ to $\infty$. Thus, $\varphi$ maps a point of $C_0=\partial U_0$ to $\infty$ and $\varphi(C_0)$ is contained in a great circle through $\infty$. We set $U=\varphi(U_0)$ and note that the set $C=\partial U\cap \C$ is totally disconnected and is contained in two locally rectifiable curves passing through $\infty$, and thus in the union of countably many rectifiable curves in the plane. Then the map $g=\varphi \circ f_0^{-1} \circ \psi^{-1} \colon V\to U$ has the desired properties.
\end{proof}

Consider the set $C$, the domains $U=\C\setminus C$, $V\subset \mathbb D$, and the map $g\colon V \to U$ as in Lemma \ref{lemma:nonremovable}. By Example \ref{example:cantor}, there exists a length space $(\C, d)$ arising from the density $\chi_U$ such that $(\C,d)$ has locally finite Hausdorff $2$-measure and the identity map $id\colon (\C,|\cdot |)\to (\C, d) $ is a weakly $1$-quasiconformal homeomorphism. The fact that $\partial U$ is contained in countably many  rectifiable paths (in the Euclidean metric), together with Proposition \ref{prop:cantor} \ref{prop:cantor:curve}, imply that $\mathcal H^1_d(\partial U)=0$.

Since all boundary components of $U$ are points and $g^*$ is a bijection between the boundary components of $V$ and $U$, it follows that $g$ extends continuously to a map from $\mathbb D$ onto $\C$. We denote the extension by $g$. Moreover, $g$ is a monotone map. Indeed, the preimage of each point of $\C$ under $g$ is either a point or connected component of $\partial V\cap \mathbb D$.

We claim that $g\colon  (\mathbb D, |\cdot |) \to (\C, d)$ is a weakly $1$-quasiconformal map. Suppose that this is the case and define $X$ to be the space $(\mathbb C, d)$. Then the maps 
\begin{align*}
    id\colon (\mathbb C,|\cdot |)\to X \quad \textrm{and}\quad g\colon (\mathbb D, |\cdot |)\to X
\end{align*}
are both weakly $1$-quasiconformal. This concludes the construction.

Now we prove the claim that $g$ is weakly $1$-quasiconformal. This follows from Lemma \ref{lemma:weak_upper_gradient_path_integral} \ref{lemma:weak_upper_gradient_measure_ineq} upon verifying that  $|g'|\chi_V$ is an upper gradient of $g$ and for any Borel set $E\subset \mathbb C$ we have
\begin{align}\label{example:measure}
    \int_{g^{-1}(E)} |g'|^2 \chi_V \, d\mathcal H^2_{|\cdot|}\leq  \mathcal H^2_{d}(E). 
\end{align}

First, we verify \eqref{example:measure}. Since $g^{-1}(\partial U)=\partial V\cap \mathbb D$, it suffices to verify the inequality for Borel sets $E\subset U$. Since the Hausdorff $2$-measure agrees there with the Lebesgue measure, the desired inequality follows from the conformality of $g$ on $V$ and the fact that $d$ is locally isometric to the Euclidean metric in $U$. 

Next, we prove that $|g'|\chi_V$ is an upper gradient of $g$. It is crucial here that  $\mathcal H^1_{d}(\partial U)=0$.  It suffices to prove that for every path $\gamma\colon [a,b]\to \mathbb D$ that is rectifiable with respect to the Euclidean metric we have
\begin{align*}
    d( g(\gamma(a)) ,g(\gamma(b))) \leq \int_{\gamma} |g'|\chi_V \, ds.
\end{align*}
Let $[a_i,b_i]$, $i\in I$, be the closures of the components of $\gamma^{-1}(V)$ and consider the subpaths $\gamma_i =\gamma|_{[a_i,b_i]}$, $i\in I$, of $\gamma$. Since $\mathcal H^1_{d}(\partial U)=0$, by Proposition \ref{prop:cantor} \ref{prop:cantor:curve}, we have
\begin{align*}
    d( g(\gamma(a)) ,g(\gamma(b))) &\leq \ell_{d}( g\circ \gamma) = \sum_{i\in I }\ell_{|\cdot |}( g\circ \gamma_i)\\
    &=\sum_{i\in I} \int_{\gamma_i} |g'|\, ds=\sum_{i\in I} \int_{\gamma_i} |g'| \chi_{V}\, ds\leq \int_{\gamma} |g'|\chi_V \, ds.
\end{align*}
This completes the proof.\qed

\bigskip

\bibliographystyle{abbrv}
\bibliography{bibliography}

\end{document}